\documentclass[11pt]{amsart}
\usepackage[margin=1.05in]{geometry}
\usepackage{amsmath,amssymb,amsthm,mathtools}
\usepackage{enumitem}
\usepackage{xcolor}
\usepackage{microtype}
\usepackage[colorlinks=true,linkcolor=blue,citecolor=blue,urlcolor=blue]{hyperref}

\newtheorem{theorem}{Theorem}[section]
\newtheorem{lemma}[theorem]{Lemma}
\newtheorem{proposition}[theorem]{Proposition}
\newtheorem{corollary}[theorem]{Corollary}
\theoremstyle{remark}
\newtheorem{remark}[theorem]{Remark}
\newtheorem*{remark*}{Remark}
\theoremstyle{definition}
\newtheorem{definition}[theorem]{Definition}

\newcommand{\eps}{\varepsilon}
\newcommand{\Z}{\mathbb{Z}}
\newcommand{\N}{\mathbb{N}}
\newcommand{\R}{\mathbb{R}}
\newcommand{\E}{\mathbb{E}}
\newcommand{\PP}{\mathbb{P}}
\newcommand{\Hh}{\mathbb{H}}
\newcommand{\I}{\mathbb{I}}
\newcommand{\bn}{\mathbf{n}}
\newcommand{\bX}{\mathbf{X}}
\newcommand{\bY}{\mathbf{Y}}

\newcommand{\cP}{\mathcal{P}}
\newcommand{\cA}{\mathcal{A}}
\newcommand{\fh}{\mathfrak{h}}

\newcommand{\expk}[1]{\exp_{#1}}
\newcommand{\logk}[1]{\log_{#1}}
\newcommand{\one}{\mathbf{1}}
\newcommand{\vt}{\vartheta}
\renewcommand{\Re}{\operatorname{Re}}
\DeclareMathOperator{\TV}{TV}
\DeclareMathOperator{\KL}{KL}

\title[Effective logarithmic two-point Chowla bounds]{Effective logarithmic two-point Chowla bounds in every window}
\author{Scott D.~Hughes}
\date{}

\hypersetup{pdftitle={Effective logarithmic two-point Chowla bounds in every window},pdfauthor={Scott D. Hughes}}

\begin{document}
\begin{abstract}
We give an effective form of the logarithmically averaged two-point Chowla theorem, uniform in the affine forms and the averaging window. For admissible forms $L_i(n)=a_in+b_i$, put $\fh:=\max(a_1,a_2,|b_1|,|b_2|,|a_1b_2-a_2b_1|,2)$. For every $g_1,g_2\in\{\lambda,\mu\}$ and $0<\eps\le1/10$, the correlation $\sum_{x/\omega<n\le x}g_1(L_1(n))g_2(L_2(n))/n$ has absolute value at most $\eps\log\omega$ whenever $x\ge\omega\ge\expk4(C\fh\eps^{-2})$. Here $\expk4$ denotes four iterated exponentials. A translation lemma separates the common translation from the tower: for $\lambda(n+b)\lambda(n+b+k)$, $k\ne0$, cancellation is uniform when $|k|=o(\logk4\omega)$ and $|b|=\omega^{o(1)}$. The same thresholds give logarithmic equidistribution of the four Liouville sign pairs. The proof follows Tao's entropy argument, using two zero-padded blocks in the original affine coordinates, an entropy-to-expectation inequality, and estimates of Matom\"aki--Radziwi\l\l--Tao and Green--Tao. The constant $C$ is effective but depends on uncomputed constants in these inputs.

Separately, for the non-pretentiousness level $N_\lambda(x)$ in Tao's hypothesis, we prove the fully explicit bounds $\ell/3-\frac43\log\ell-7\le N_\lambda(x)\le\ell-\log\ell+2.53/\ell$ for $\ell=\log\log x\ge64$. Under GRH the lower bound improves to $\ell-\log\ell-\log2-2/\ell$, so $N_\lambda(x)=\ell-\log\ell+O(1)$. These bounds use an explicit zero-free region, Harnack's inequality, a finite resonator, and a comparison of smoothed Euler products. The correlation results concern logarithmic averages and do not establish two-point Chowla at natural density.
\end{abstract}
\maketitle

\section{Introduction}

Let $\lambda$ be the Liouville function and $\mu$ the M\"obius function. Tao \cite{Tao16} proved the logarithmically averaged two-point Elliott conjecture; for $\lambda$ it states that for an admissible pair of affine forms $L_i(n)=a_in+b_i$ ($a_i\in\N$, $b_i\in\Z$, $\Delta:=a_1b_2-a_2b_1\neq0$) and every $\eps>0$ there is $A=A(\eps,a_1,a_2,b_1,b_2)$ such that for all $x\ge\omega\ge A$,
\begin{equation}\label{eq:tao}
\Bigl|\sum_{x/\omega<n\le x}\frac{\lambda(L_1(n))\lambda(L_2(n))}{n}\Bigr|\le\eps\log\omega .
\end{equation}
This holds in every window, but Tao's Remark 1.4 states that the threshold $A$, though effective in principle, was not computed because the bounds were expected to be poor. In this note we give an effective threshold, explicit in terms of the constants in two analytic inputs (which are effective but have not been computed). The bound is poor---a four-fold iterated exponential---but its dependence on the forms is linear inside the tower and there is no constraint between $\eps$ and the forms, so one obtains a single estimate uniform over all admissible pairs of bounded height and over every window; and the common translation of the forms can be moved out of the tower altogether (Theorem~\ref{thm:C}). Throughout, $g(n):=0$ for $n\le0$.

\begin{definition}\label{def:height}
The \emph{augmented height} of an admissible pair is $\fh:=\max(a_1,a_2,|b_1|,|b_2|,|\Delta|,2)$; it includes the determinant and is therefore not the usual maximum of the four coefficients, but we say ``height'' for short. Its \emph{block parameter} is
\[
R_0:=\max\Bigl(1,\frac{|b_1|}{a_1},\frac{|b_2|}{a_2}\Bigr),\qquad K:=\lceil4R_0\rceil,\qquad B:=K(a_1+a_2),
\]
and we write $a:=a_1a_2$.
We write $\logk k$, $\expk k$ for the $k$-fold iterated logarithm and exponential, and $p$ always denotes a prime.
\end{definition}

\begin{theorem}\label{thm:A}
There is an effective absolute constant $C_1$ such that for every admissible pair, every $g_1,g_2\in\{\lambda,\mu\}$, every $0<\eps\le1/10$, and all $x\ge\omega\ge\expk4(C_1B\eps^{-2})$,
\begin{equation}\label{eq:main}
\Bigl|\sum_{x/\omega<n\le x}\frac{g_1(L_1(n))g_2(L_2(n))}{n}\Bigr|\le\eps\log\omega .
\end{equation}
Since $B\le15\fh$ (Lemma~\ref{lem:B}), this holds in particular for $\omega\ge\expk4(15C_1\fh\eps^{-2})$; and there is an effective absolute $C$ such that for $x\ge\omega\ge\expk4(C\fh)$, uniformly over admissible pairs of height at most $\fh$,
\begin{equation}\label{eq:rate}
\Bigl|\sum_{x/\omega<n\le x}\frac{g_1(L_1(n))g_2(L_2(n))}{n}\Bigr|\le C\Bigl(\frac{\fh}{\logk4\omega}\Bigr)^{1/2}\log\omega .
\end{equation}
\end{theorem}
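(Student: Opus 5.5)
We will run Tao's entropy decrement argument, tracking every constant. Argue by contradiction: suppose that for some window $(x/\omega,x]$ the weighted correlation in \eqref{eq:main} exceeds $\eps\log\omega$. Reduce first to $g_1=g_2=\lambda$ by writing $\mu=\lambda\cdot\mu^2$ and expanding $\mu^2(n)=\sum_{d^2\mid n}\mu(d)$. The contribution of $d>D$ costs only $O(\log\omega/D)$ after logarithmic averaging. For $d\le D$, restricting to $d_i^2\mid L_i(n)$ gives a sum over an arithmetic progression, which we rewrite in terms of new admissible forms. These forms have height at most polynomial in $D\fh$, and this is harmless inside the tower.

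The core step is to pass to the random model. Pick a random $n$ with logarithmic density in the window, and consider the random sign blocks
\[
\bX_m=\bigl(\lambda(a_1(n+j)+b_1)\bigr)_{0\le j<Km},\qquad \bY_m=\bigl(\lambda(a_2(n+j)+b_2)\bigr)_{0\le j<Km}.
\]
We keep the original affine coordinates, and we zero-pad via $g(n)=0$ for $n\le0$. This is where $R_0$ and $K=\lceil4R_0\rceil$ enter: they guarantee that the translates $L_i(n+j)$ for $j<K$ cover the needed shifts. Next comes the entropy decrement. Iterating over scales $H=2^k$, and over primes $p\sim H$ dividing $a_1(n+j)+b_1$ with multiplicativity $\lambda(pm)=-\lambda(m)$, the standard argument gives a scale $H\le\expk2(O(B\eps^{-2}))$ at which the mutual information between $(\bX_H,\bY_H)$ and $n\bmod p$ is at most $\eps^2H/p$ on average over $p$. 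This uses $\Hh(\bX_H,\bY_H)\le 2BH$ and a pigeonhole over $\asymp B\eps^{-2}$ dyadic scales. An entropy-to-expectation inequality of Pinsker type then converts the small mutual information into the following: the correlation at $n$ approximately equals an average over $p$ of $\lambda(L_1(n+j))\lambda(L_2(n+j+\ell p))$-type twisted sums. Equivalently, it is controlled by a sum of $\lambda$ over short intervals twisted by $e(\alpha n)$. The logarithmic weighting in a window of length $\log\omega$ must absorb each scale change, and this forces $\log\omega\gg\expk3$ of the final parameter.

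Finally we bound these twisted short sums. We need a bound of the form
\[
\sup_\alpha\Bigl|\sum_{x<n\le x+H}\lambda(n)e(\alpha n)\Bigr|=o(H)
\]
for almost all $x$. For this we invoke the Matom\"aki--Radziwi\l\l--Tao estimate, with explicit dependence: it needs $H\ge\logk{}$-type growth in $x$ and saves a power of $\log H$. Minor arcs are handled directly; major arcs, i.e.\ the Gowers/linear part, are handled by Green--Tao. Combining, we need $\eps^{-2}B$ scales, each costing one exponential. This gives the tower $\expk4(C_1B\eps^{-2})$ and hence a contradiction. The bound $B\le15\fh$ is Lemma~\ref{lem:B}. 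The rate \eqref{eq:rate} follows by solving $\omega=\expk4(C_1B\eps^{-2})$ for $\eps$, which gives $\eps\asymp(\fh/\logk4\omega)^{1/2}$. When this $\eps$ would exceed $1/10$, the bound is trivial once $C$ is enlarged.

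The main obstacle is making the entropy decrement quantitative uniformly in the forms. The step to verify is that the number of pigeonhole scales depends only linearly on $B$, with no interaction between $\eps$ and the $a_i,b_i$. I would first aim for a clean inequality $\sum_k \I_k\le \Hh\le 2B\cdot(\text{scale})$ in which $B$ enters only through the block length. I would then check that the passage from $\bX_H$ to shifted blocks loses nothing beyond constants.
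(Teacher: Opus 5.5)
Your outline names the right ingredients, but the step that carries all the quantitative content, the entropy decrement, fails as you describe it. Take $\bY_H:=\bn\bmod P_H$, with $P_H$ the product of the primes in $(\vt H/2,\vt H]$. The relative subadditivity inequality is $\Hh(\bX_{kH})/(kH)\le\Hh(\bX_H)/H-\I(\bX_H;\bY_H)/H+\log P_H/(kH)+O(1/H)$, and $\log P_H\asymp\vt H$. The decoupling step (Hoeffding plus Donsker--Varadhan, with $V\asymp K^2H/(\vt\log H)$ and target error $\asymp\eps KH/\log H$) only lets you assume $\I(\bX_H;\bY_H)>\delta H/\log H$ with $\delta\asymp\eps^2\vt$. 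For $k=2$ the residue cost swamps this gain, so dyadic scales give no decrement at all. Conditioning on a single $\bn\bmod p$ does not help either: the information $\approx\eps^2H/p$ you assume is far below the cost $\log p$. One needs rungs $H_{j+1}=k_jH_j$ with $k_j\asymp\vt\log H_j/\delta$. Then $\log H$ grows by about $\log\log H+O(\log(1/\eps))$ per rung while the normalized entropy drops by $\asymp\delta/\log H_j$, so exhausting the budget $B\log3$ requires $\logk3H_+\asymp B/\delta\asymp B\eps^{-2}$. That is exactly where the linear $B$ and the $\eps^{-2}$ come from. It is $\sum_j1/\log H_j$, not the number of rungs, that is controlled linearly by $B/\delta$. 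The fourth exponential is $\log\omega\ge(BH_+)^4$. This is needed so that $\bn$ is nearly uniform modulo $P_H$ and so that the affine-invariance errors, accumulated $\gg H^2$ times, are negligible. It also requires the window reduction to $x/\omega\ge\omega^{\eps/20}$, which you omit. Your accounting ($H\le\expk2(O(B\eps^{-2}))$, then ``$B\eps^{-2}$ scales, each costing one exponential'') is internally inconsistent and does not derive $\expk4(C_1B\eps^{-2})$.

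Several further steps fail as written. Your blocks $(\lambda(L_i(n+j)))_j$, with primes dividing $L_1(n+j)$, do not implement the multiplicativity step. One needs $p\mid\bn+j$, so that $a_i(\bn+j)+pb_i=p\,L_i(m)$ for both $i$ at once. The entries at $a_i\bn+a_ij+pb_i$ have the form $L_i(\bn+j')$ only if $a_i\mid(p-1)b_i$. That is why the paper uses consecutive blocks $(g_i(a_i\bn+r))_{1\le r\le a_iKH}$, where $K\ge4R_0$ ensures $|pb_i|\le a_iKH/400$ for $p\le\vt H$. The zero-padding is a device of the Fourier step (blocks extended by zeros to length $2a_iKH$), not the convention $g(n)=0$ for $n\le0$. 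Likewise, Green--Tao is not used for ``major arcs'' of $\lambda$. Its restriction estimate bounds the number $|\Xi|$ of frequencies at which $S_H(\alpha)=\sum_{p\in\cP_H}e(\alpha p)/p$ is large, polynomially in $a,K,|\Delta|,\eps^{-1}$. MRT, which is uniform in $\alpha$, is then applied at each of these frequencies. Finally, the reduction $\mu=\lambda\mu^2$ is not harmless. The tail forces $D\gtrsim1/\eps$, and the new forms have slopes up to $a_iD^2$. Handled as you propose, $B$ is multiplied by up to $D^2\gtrsim\eps^{-2}$ at the innermost level of the tower, giving $\expk4(CB\eps^{-4})$ and losing the exponent $1/2$ in \eqref{eq:rate}. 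The paper instead keeps $g_i\in\{\lambda,\mu\}$ throughout; the multiplicativity step only uses $|g(pn)+g(n)|\le\one_{p\mid n}$. It transfers only the MRT short-interval bound to $\mu$, via $\mu(n)=\sum_{d^2\mid n}\mu(d)\lambda(n/d^2)$, at a constant-factor cost that enters only through $B_0$.
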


The constant $C_1$ depends on the implied constant in the Matom\"aki--Radziwi\l\l--Tao estimate \cite[Theorem 1.3]{MRT15} and on the two implied constants and the admissibility threshold in the Green--Tao restriction estimate \cite[Propositions 3.1(i), 4.2]{GT06}; these are effective but have not been computed, and we do not compute them. The parameter choices below are explicit in terms of these input constants; the final absolute tower constant is not numerically evaluated.

The same threshold controls the frequencies of the four sign pairs (Corollary~\ref{cor:signs}), and the common translation of the forms can be separated from their separation:

\begin{theorem}\label{thm:C}
For an admissible pair put $\beta_i:=b_i/a_i$, $d:=|\beta_1-\beta_2|=|\Delta|/(a_1a_2)$, $K_s:=\lceil4\max(1,d/2)\rceil$, $B_s:=K_s(a_1+a_2)$, $\fh_s:=\max(a_1,a_2,|\Delta|,2)$, and let $m$ be an integer nearest to $(\beta_1+\beta_2)/2$. Then $B_s\le8\fh_s$, and for every $g_1,g_2\in\{\lambda,\mu\}$, every $0<\eps\le1/10$, and all
\begin{equation}\label{eq:threshC}
x\ge\omega\ge\max\bigl\{\expk4((6C_1+1)B_s\eps^{-2}),\ 8\,[4(|m|+1)]^{2/\eps-1}\bigr\},
\end{equation}
the bound \eqref{eq:main} holds, and so does its analogue for the sign-pair sums of Corollary~\ref{cor:signs}. Consequently there is an effective absolute $C$ such that for all $x\ge\omega$ with $\logk4\omega\ge1$,
\begin{equation}\label{eq:rateC}
\Bigl|\sum_{x/\omega<n\le x}\frac{g_1(L_1(n))g_2(L_2(n))}{n}\Bigr|\le C\Bigl[\Bigl(\frac{\fh_s}{\logk4\omega}\Bigr)^{1/2}\log\omega+\log(2+|m|)\Bigr].
\end{equation}
\end{theorem}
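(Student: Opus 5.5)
The idea is to reduce Theorem~\ref{thm:C} to Theorem~\ref{thm:A} by shifting the summation variable. Write $n=n'-m$. Then
\[
L_i(n)=a_in'+(b_i-a_im)=:L_i'(n'),
\]
where $b_i'=b_i-a_im$. The new pair has the same $a_i$ and the same determinant $\Delta$, and $\beta_i'=\beta_i-m$. Since $m$ is an integer nearest to $(\beta_1+\beta_2)/2$, we get
\[
|\beta_i'|\le \tfrac{d}{2}+\tfrac12 .
\]
So $R_0'\le\max(1,d/2+1/2)$, and $K'=\lceil4R_0'\rceil$ is at most a bounded multiple of $K_s$; crudely $K'\le 6K_s$ perhaps, which explains the factor $6C_1+1$. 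Hence $B'\le(6C_1+1)B_s/C_1$ up to absolute slack. The bound $B_s\le 8\fh_s$ is elementary. Using $d=|\Delta|/(a_1a_2)\le|\Delta|$, we get $K_s\le 4\max(1,|\Delta|/2)+1\le 4\fh_s$ (up to the rounding), and $a_1+a_2\le2\fh_s$. I would check the constant carefully here, probably as $K_s\le 2\fh_s+\dots$, so as to land exactly on $8$.

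Next, apply Theorem~\ref{thm:A} to the shifted pair on the shifted window. The shift changes the weights: the sum over $x/\omega<n\le x$ of $F(n)/n$ becomes a sum over $x/\omega+m<n'\le x+m$ of $F'(n')/(n'-m)$. I would bound three errors:
\begin{enumerate}
\item the change of endpoints, which costs $O(|m|\cdot\omega/x)$ terms of size $\le 1/n'$;
\item replacing $1/(n'-m)$ by $1/n'$, which costs $\sum |m|/(n'(n'-m))$;
\item small $n$, where $n'-m$ is small or where the shifted forms are nonpositive.
\end{enumerate}
The weight error in item 2 is $\ll |m|\cdot\omega/x\le |m|$, which is harmful only when $x/\omega$ is small, i.e.\ $x/\omega\lesssim|m|/\eps$. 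To handle this, split the window. On the initial range $n\le N_0:=4(|m|+1)/\eps$ or so, bound trivially by $\log N_0$. On the rest, the relative perturbation is $\le\eps/4$. The trivial piece is at most $\tfrac{\eps}{2}\log\omega$ precisely when $\omega\ge N_0^{2/\eps}$, which matches the second term $8[4(|m|+1)]^{2/\eps-1}$ in \eqref{eq:threshC}. I expect the exact bookkeeping to work as follows: apply Theorem~\ref{thm:A} with $\eps/2$ on the window $(\max(x/\omega,N_0),x]$, whose ratio $\omega'\ge\omega/N_0$ is still above the tower threshold. The tower has to absorb both the loss $\log\omega'\approx\log\omega$ and the halving of $\eps$ (which costs a factor $4$ in $\eps^{-2}$). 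I would fold these into the constant $(6C_1+1)$, perhaps rechoosing the factors.

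The sign-pair analogue follows identically, since Corollary~\ref{cor:signs} expresses the sign frequencies as linear combinations of the correlation sums and the trivial sum $\sum 1/n$ (and single sums of $\lambda$, which are handled similarly or are trivially small in log average).

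For \eqref{eq:rateC}, take $\eps=\max\bigl(C'(\fh_s/\logk4\omega)^{1/2},\ \text{the value making }\omega=8[4(|m|+1)]^{2/\eps-1}\bigr)$, capped at $1/10$. If the cap binds, use the trivial bound $\log\omega\le C(\fh_s/\logk4\omega)^{1/2}\log\omega$. Otherwise the second constraint gives $\eps\approx 2\log(4|m|+4)/\log\omega$, so $\eps\log\omega\ll\log(2+|m|)$.

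The main obstacle is the shift step: making the constant $(6C_1+1)$ and the $|m|$-threshold come out exactly as stated. I would first try the cleanest split, at $N_0=4(|m|+1)/\eps$, and adjust.
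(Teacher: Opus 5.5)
Your centring step matches the paper's, but the translation step as you set it up does not prove the stated threshold. Cutting at the $\eps$-dependent point $N_0=4(|m|+1)/\eps$ already loses too much. The head costs $\log(N_0/j)$, and $N_0^{2/\eps}=\eps^{-2/\eps}[4(|m|+1)]^{2/\eps}$ exceeds $8[4(|m|+1)]^{2/\eps-1}$ by the unbounded factor $\frac12(|m|+1)\eps^{-2/\eps}$, so the two do not ``match''. The budget also fails to close. Charging the weight change through the relative bound $\eps/4$ costs $\frac\eps4\log\omega$; adding $\frac\eps2\log\omega$ for the head and $\frac\eps2\log\omega'$ from Theorem~\ref{thm:A} gives more than $\eps\log\omega$. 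Remark~\ref{rem:trans} shows that the exponent $2/\eps-1$ is sharp for any black-box translation from tolerance $\eps/2$, so there is no slack for such losses.

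What is missing is the exact bookkeeping of Lemma~\ref{lem:trans}. Put $u_0:=x/\omega$, $j:=\lfloor u_0\rfloor$, $B_1:=2|m|+2$ and $K_1:=\max(j,B_1)$.
(i) Discard only $j<n\le K_1$. This range does not depend on $\eps$ and costs $\log(K_1/j)\le\log B_1$.
(ii) For $n>K_1$, replace $1/n$ by $1/(n+m)$. This costs at most $\log2$, by the telescoping identities $\sum_{n>K_1}(1/n-1/(n+|m|))=H_{K_1+|m|}-H_{K_1}$ and $\sum_{n>K_1}(1/(n-|m|)-1/n)=H_{K_1}-H_{K_1-|m|}$. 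This is a constant, not $\frac\eps4\log\omega$.
(iii) Apply the hypothesis to the translated window with $X=x+m$ and ratio $\Omega=(x+m)/(K_1+m)\le2x/K_1$, and credit the shortening of the window. Since $\frac\eps2\log(2x/K_1)=\frac\eps2\log\omega-\frac\eps2\log(K_1/j)+\frac\eps2\log(u_0/j)+\frac\eps2\log2$, the head costs only $(1-\frac\eps2)\log B_1$ net.

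The total is $\frac\eps2\log\omega+(1-\frac\eps2)\log B_1+(1+\eps)\log2$. Requiring this to be at most $\eps\log\omega$ is exactly $\omega\ge8[4(|m|+1)]^{2/\eps-1}$. Only $\Omega\ge\omega/(2B_1)\ge\sqrt\omega$ is available (using $\omega\ge4B_1^2$), so the centred tower threshold $A$ must be imposed as $\omega\ge A^2$.

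The constants also need fixing. Your sketch of $B_s\le8\fh_s$ bounds $K_s\le4\fh_s$ and $a_1+a_2\le2\fh_s$ separately, which only gives $8\fh_s^2$. You have to play $d=|\Delta|/(a_1a_2)$ against $a_1+a_2$:
\begin{itemize}
\item if $d\le2$, then $K_s=4$ and $B_s\le8\max(a_1,a_2)$;
\item if $d>2$, then $K_s\le2d+1\le\frac52d$ and $B_s\le\frac52|\Delta|(a_1^{-1}+a_2^{-1})\le5|\Delta|$.
\end{itemize}
Likewise, $6C_1+1$ does not come from a crude $K'\le6K_s$, which after halving $\eps$ would give $\expk4(24C_1B_s\eps^{-2})$. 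From $|b_i'|/a_i\le\frac d2+\frac12$ one gets $K'\le K_s+2\le\frac32K_s$ (as $K_s\ge4$), so $B'\le\frac32B_s$. Theorem~\ref{thm:A} at tolerance $\eps/2$ then holds in every window of ratio at least $A:=\expk4(4C_1B'\eps^{-2})\le\expk4(6C_1B_s\eps^{-2})$. The bound $A^2\le\expk4(6C_1B_s\eps^{-2}+1)$ accounts for the $+1$.

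For the sign pairs, feed Corollary~\ref{cor:signs} for the centred forms directly into the same translation step, with $f(n)=\one_{\lambda(L_1'(n))=s,\,\lambda(L_2'(n))=t}-\frac14$. The one-point sums $\sum\lambda(L_i(n))/n$ are not trivially small in a logarithmic window (the paper needs Lemma~\ref{lem:mrt} for them), but this route never touches them. Your choice of $\eps$ for \eqref{eq:rateC} is fine once the threshold is right.
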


For $\lambda(n+b)\lambda(n+b+h)$, $h\ne0$, one has $\fh_s=\max(|h|,2)$ and $m$ is the integer nearest $b+h/2$, so cancellation is uniform as soon as $|h|=o(\logk4\omega)$ and $|b|=\omega^{o(1)}$, rather than $|b|=o(\logk4\omega)$. The proof (\S\ref{sec:centred}) applies Theorem~\ref{thm:A} to the centred forms $L_i(n-m)$ and then translates by $m$ with an elementary lemma; it does not re-run the entropy argument.

\subsection*{What is new and what is not}
Tao proved the qualitative every-window statement for fixed forms. Here we track the \emph{effective joint dependence} on $\eps$, the forms and $\omega$, with linear dependence on the height inside the tower. The proof follows his entropy argument \cite{Tao16}, with five choices that give this dependence. (i) We observe the two forms in their own coordinates, as two blocks $(g_1(a_1\bn+r))_{r\le a_1KH}$ and $(g_2(a_2\bn+r))_{r\le a_2KH}$, instead of normalising both to slope $a_1a_2$; the entropy budget is then $BH\log3$ and the medium-prime band and information target no longer depend on the forms. (ii) The passage from mutual information to a decoupled expectation, which in \cite{Tao16} goes through a ``good'' set, a Markov inequality and an exceptional set, is a single inequality $|\E_PF-\E_QF|\le\sqrt{2V\KL(P\|Q)}$ (Lemma~\ref{lem:ent}). (iii) The Fourier step is run on zero-padded blocks of twice the observed length, with the $j$-sum truncated from the outset, which removes the boundary terms that would otherwise force $\eps\ll1/\fh$. (iv) Tao's affine-invariance lemma carries an $o(1)$ error accumulated $\gg H^2$ times; a window reduction to $x/\omega\ge\omega^{\eps/10}$ (Lemma~\ref{lem:window}) makes it harmless and makes $\bn$ exponentially equidistributed modulo the medium-prime product. (v) The restriction step counts the joint resonance group of the two blocks (Lemma~\ref{lem:restr}), and a translation lemma (Lemma~\ref{lem:trans}) moves the common translation of the forms out of the tower.

The statement is logarithmic: it implies nothing about $\sum_{n\le x}\lambda(n)\lambda(n+h)$ at natural density and nothing about prime tuples. Quantitative every-window estimates are already known for fixed forms. In the basic case $(n,n+1)$, Helfgott and Ubis \cite[Theorem 5.1]{HU19} make Tao's entropy method quantitative, obtaining an error $\ll\log\omega/\min(\logk3\omega,\logk4x)^{1/5}$; Helfgott and Radziwi\l\l \cite[Corollary 1.5]{HR22} obtain the stronger bound $\ll\log\omega/(\log\log\omega)^{1/2}$ by expansion. Their note at the end of \S8.3 extends this conclusion to fixed reduced admissible affine pairs; that note does not state an explicit bound for the dependence on the coefficients. Pilatte \cite[Theorem 1.1]{Pil23} proves a power-of-log saving for the corresponding logarithmic prefix sum. Our contribution is the explicitly tracked joint dependence on $\eps$, the affine forms and every averaging window, together with the separation of the common translation; we claim no rate improvement for fixed forms.

Tao and Ter\"av\"ainen \cite[Theorem 3.1, Remarks 3.2]{TT25} give a quantitative correlation theorem for multiplicative functions under specified equidistribution or non-pretentiousness hypotheses. Its Liouville specialization allows distinct positive slopes and coefficients up to a small absolute power of $\log N$, with a power-of-log saving in natural averages outside a set of scales of small logarithmic density. Logarithmic integration gives corresponding prefix bounds. Guo \cite[Theorem 1.1]{Guo26v2} states $\sup_{1\le h\le(\log x)^A}|\sum_{n\le x}\lambda(n)\lambda(n+h)/n|\ll_A(\log x)^{1-c}$ for every fixed $A>0$, with no exceptional shifts, and \cite[Theorem 1.1]{Guo26v4} states a bound uniform in the terminal point for $1\le h\le x$ outside a single sparse set of shifts; we cite these as stated theorems of a preprint. These coefficient-uniform prefix estimates have much stronger rates and coefficient ranges than ours, but do not directly give an error controlled by $\omega$ alone in every window: subtracting prefix bounds leaves an error of size $(\log x)^{1-c}$, which gives no cancellation when $\log\omega\le(\log x)^{1-c}$. We do not know how to combine their coefficient uniformity with an every-window estimate of the kind considered here.

\subsection*{The non-pretentiousness level}
Tao's theorem is stated for general $1$-bounded multiplicative $g_1,g_2$ under the hypothesis that $g_1$ is non-pretentious up to level $A$ at scale $x$:
\begin{equation}\label{eq:np}
\sum_{p\le x}\frac{1-\Re\bigl(g_1(p)\overline{\chi(p)}p^{-it}\bigr)}{p}\ge A\qquad\text{for all $\chi\bmod q$, $q\le A$, and all $|t|\le Ax$}.
\end{equation}
For $g_1\in\{\lambda,\mu\}$ this hypothesis is not needed in Theorem~\ref{thm:A}: the only place it enters Tao's proof is the short-interval exponential-sum estimate, which for $\lambda$ is available unconditionally \cite[Theorem 1.3]{MRT15} and for $\mu$ follows from it (Lemma~\ref{lem:mrt}). It is nevertheless natural to ask up to what level \eqref{eq:np} holds for $\lambda$. Let $\cA(x)$ be the set of $A\ge1$ for which \eqref{eq:np} holds with $g_1=\lambda$, and $N_\lambda(x):=\sup(\{0\}\cup\cA(x))$. (The set $\cA(x)$ can be empty: at $x=286$ one has $\sum_{p\le286}(1+\cos(14\log p))/p=0.791\ldots<1$, and $14\le286A$ for every $A\ge1$.)

\begin{theorem}\label{thm:N}
Let $\ell:=\log\log x$.
\begin{enumerate}[leftmargin=1.8em,label=\textup{(\roman*)}]
\item For $x\ge286$, $N_\lambda(x)\le\ell+\tfrac12$ (more precisely $\ell+B_M+O(1/\log^2x)$, $B_M=0.2614\ldots$ Mertens' constant; for $\ell\ge64$ this is superseded by (ii)).
\item For $\ell\ge64$,
\[
\frac\ell3-\frac43\log\ell-7\ \le\ N_\lambda(x)\ \le\ \ell-\log\ell+\frac{2.53}\ell.
\]
(The unrounded unconditional lower bound is \eqref{eq:N6}.)
\item Assume GRH for Dirichlet $L$-functions. For $\ell\ge64$,
\[
\ell-\log\ell-\log2-\frac2\ell
\ \le\ N_\lambda(x)\ \le\ \ell-\log\ell+\frac{2.53}\ell.
\]
In particular, $N_\lambda(x)=\ell-\log\ell+O(1)$ under GRH.
\end{enumerate}
\end{theorem}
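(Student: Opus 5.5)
The plan is to reduce everything to the quantity
\[
S_x(\chi,t):=\sum_{p\le x}\frac{1+\Re\bigl(\chi(p)p^{-it}\bigr)}{p},
\]
which is what \eqref{eq:np} becomes for $g_1=\lambda$, since $\lambda(p)=-1$. By definition, $N_\lambda(x)\ge A$ if and only if $S_x(\chi,t)\ge A$ for all $\chi\bmod q$ with $q\le A$ and all $|t|\le Ax$. Upper bounds on $N_\lambda$ therefore come from exhibiting one bad pair $(\chi,t)$. Lower bounds come from a uniform lower bound on $\sum_p\Re(\chi(p)p^{-it})/p$, that is, an upper bound on $\log|L(1+it,\chi)|$ after smoothing.

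For part (i), I would take $\chi$ trivial and $t=14$. Because $14\le Ax$ for every $A\ge1$, any admissible $A$ must satisfy $A\le S_x(1,14)$. I would then evaluate $S_x(1,14)=\sum_{p\le x}1/p+\sum_{p\le x}\cos(14\log p)/p$. The first sum is $\ell+B_M+O(1/\log^2x)$ by explicit Mertens bounds (Rosser--Schoenfeld/Dusart). The second sum converges to $\Re\log\zeta(1+14i)$ minus higher prime-power terms, which is negative. A finite computation up to a cutoff, plus an explicit prime-number-theorem tail for $\sum_{p>X}p^{-1-14i}$, should then give the bound $\ell+\tfrac12$ for $x\ge286$.

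For the upper bound in (ii)--(iii), I would use a finite resonator. Take $\chi$ trivial and aim for $t\le x\ell$ such that $\cos(t\log p)\approx-1$ for all $p\le y$, with $\log y\approx\log x/\ell$ chosen so that a pigeonhole or resonator argument fits inside the range $|t|\le Ax$. I would average $S_x(1,t)$ against the weight $|R(t)|^2$, where $R(t)=\prod_{p\le y}(1-p^{-it})$ or a truncated Dirichlet-polynomial variant, over $|t|\le T$ with a smooth cutoff. The diagonal terms make the average of $\sum_{p\le y}(1+\cos(t\log p))/p$ essentially zero. The off-diagonal terms and the remaining range $y<p\le x$ should contribute $\sum_{y<p\le x}1/p=\ell-\log\log y+o(1)\approx\ell-\log\ell$, provided I can show $\sum_{y<p\le x}\cos(t\log p)/p$ averages to $o(1)$ under the resonator weight. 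This is where the comparison of smoothed Euler products enters. Some $t$ then has $S_x(1,t)\le\ell-\log\ell+2.53/\ell$, and $A$ slightly larger than this fails.

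For the lower bounds, I would write $\sum_{p\le x}\chi(p)p^{-it}/p=\log L(1+1/\log x+it,\chi)+O(\text{explicit})$ via a smoothed comparison. This gives $S_x(\chi,t)\ge\ell+B_M-\log|L(\sigma+it,\chi)|-O(1)$ with $\sigma=1+1/\log x$. It then suffices to bound $\log|L(\sigma+it,\chi)|$ from above, uniformly for $q\le A$ and $|t|\le Ax$, where $\log(q(|t|+2))\approx\ell$.
\begin{itemize}[leftmargin=1.5em]
\item Under GRH, the Littlewood-type bound $\log|L(1+it,\chi)|\le\log\log\log(q|t|)+\log 2+o(1)$, made explicit, gives $\ell-\log\ell-\log2-2/\ell$ after bookkeeping.
\item Unconditionally, I would apply Harnack's inequality to the harmonic function $\log|L|$ in a disc around $\sigma+it$ lying inside an explicit zero-free region (e.g.\ Kadiri/Mossinghoff--Trudgian). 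Combined with the trivial bound $\log|L(s)|\le\log\zeta(\Re s)$ on the right edge, this yields roughly $\log|L|\le\tfrac23\ell+\tfrac43\log\ell+O(1)$, hence the bound $\ell/3-\tfrac43\log\ell-7$.
\item Possible exceptional (Siegel) zeros only matter for $q\le A\le\ell$, and I would handle them with an explicit Landau--Page bound.
\end{itemize}

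I expect the main obstacle to be the resonator step: making the off-diagonal terms and the large-prime tail fully explicit, with a constant as sharp as $2.53/\ell$. I would first try a smooth Fej\'er-type weight in $t$, so that the off-diagonal contributions are supported only where $|\log(m/n)|\lesssim 1/T$. This should kill them completely once $y^{\pi(y)}\le T$, leaving only explicit Mertens-type error terms.
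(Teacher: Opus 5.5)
Your plan has two genuine gaps, in the resonator upper bound and in the lower-bound reduction.

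\textbf{The resonator upper bound.} The upper bound in (ii)--(iii) does not follow from the resonator you chose. For $R(t)=\prod_{p\le y}(1-p^{-it})=\sum_{n\mid\prod_{p\le y}p}\lambda(n)n^{-it}$, the diagonal terms of $\int p^{-it}|R(t)|^2\,dt$ come from pairs $(n,pn)$ with $p\nmid n$. That is only half of the $2^{\pi(y)}$ divisors, so the $|R|^2$-weighted mean of $\cos(t\log p)$ is $-\frac12$, not $-1$. The weighted average of $\sum_{p\le y}(1+\cos(t\log p))/p$ is therefore $\frac12\sum_{p\le y}1/p\approx\frac12\log\ell$, not ``essentially zero''. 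The length $\prod_{p\le y}p$ must stay below $x$ for the off-diagonal terms to be controlled, so $y\lesssim\log x$, and the best this gives is $N_\lambda(x)\le\ell-\frac12\log\ell+O(1)$.

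The missing idea is to use prime powers. With coefficients $\lambda(n)$ on a box $0\le j_p\le k_p$, the mean becomes $-k_p/(k_p+1)$. You must then make the loss $\sum_{p\le z}1/(p(k_p+1))$ of size $O(1/\ell)$ while keeping $\sum_pk_p\log p<\log x$. A box of uniform height $K$ costs about $\log\ell/K+\log K/\ell\gg\log\ell/\ell$, which is never $O(1/\ell)$. The paper uses the variable box $k_p=\lfloor\sqrt{z/p}\rfloor$ with $z=\frac12\log x$. This gives $\log N=\sum_{j\ge1}\theta(z/j^2)<0.84\log x$, $J\le x^{0.03}$, and a loss $E(z)=O(1/\ell)$.

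Three smaller points on this part:
\begin{enumerate}
\item Your scale $\log y\approx\log x/\ell$ cannot be right. It would make $S_x(1,t)\approx\log\ell$, contradicting the lower bound in (ii). The correct scale is $y\asymp\log x$.
\item You should average over $t\in[0,x]$, not $t\le x\ell$. A bad $t>Ax$ does not refute $A$, and the target $A$ is below $\ell$.
\item No Euler-product comparison is needed for the large primes. With $N\le x^{0.84}$, every off-diagonal term, including all $p>z$, is $O(N/x)$ after averaging.
\end{enumerate}

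\textbf{The lower bounds.} These are set up with the wrong sign. Because $\lambda(p)=-1$,
\[
S_x(\chi,t)=\sum_{p\le x}\frac1p+\Re\sum_{p\le x}\chi(p)p^{-1-it}=\ell+\log|L(\sigma_x+it,\chi)|+O(1),
\]
with a plus sign (Lemma~\ref{lem:A1}). So $\lambda$ pretends to be $\chi(n)n^{it}$ exactly when $|L|$ is small, and you need a \emph{lower} bound for $|L|$ near the $1$-line. An upper bound for $\log|L|$ is the other direction, needs no zero-free region, and gives nothing here.

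Concretely, Harnack must be applied to the nonnegative harmonic function $M-\log|L|$ on a zero-free disk centred at $z_0=1+\eta r+it$. This needs two inputs:
\begin{enumerate}
\item An upper bound $M$ on the whole disk. The disk extends left of $\Re s=1$, so $\log\zeta(\Re s)$ does not suffice. You need a Vinogradov--Korobov-type growth bound there (Ford's constants), and Lemma~\ref{lem:A2} on the right half.
\item The Euler-product lower bound $|L(z_0,\chi)|\ge\zeta(2+2v)/\zeta(1+v)\ge v$ at the centre, where $v=\eta r$.
\end{enumerate}
With $\eta\asymp1/\ell$ and Khale's width $r^{-1}\asymp e^{2\ell/3}\ell^{1/3}$, this yields exactly the $\frac23\ell+\frac43\log\ell$ you anticipated, but now as a bound for $-\log|L|$.

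Likewise, under GRH the relevant input is a bound for $1/|L(1+it,\chi)|$. Routing through $\log|L(\sigma_x+it,\chi)|$ and Littlewood-type bounds loses absolute constants: smoothing, separately bounded prime powers, and factors like $e^\gamma$ and $\zeta(2)$. That gives $\ell-\log\ell-O(1)$ but not $-\log2-2/\ell$. The paper instead compares smoothed sums at cutoffs $x$ and $Y=(\log\mathcal Q_*)^2$ directly, using Lemma~\ref{lem:A5}. Prime powers up to $Y$ and the Mertens constant then cancel, and it drops $\sum_{p\le Y}(1+\Re\chi(p)p^{-it})/p\ge0$.

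\textbf{Part (i).} Your argument at the fixed point $t=14$ is a legitimate alternative. It needs a numerical computation and an explicit prime number theorem for the tail. The paper simply averages $D_x(1,t)$ over $t\in[0,x]$ and uses only Mertens.
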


Part (ii) is fully explicit and has the leading constant $1/3$ of the Vinogradov--Korobov exponent, which is also what one obtains asymptotically from \cite[(1-12)]{MRT15}; the proof bounds $L$ from below on a zero-free disk by Harnack's inequality, using Khale's explicit zero-free region \cite{Kha24} for $|t|\ge12$ and McCurley's \cite{McC84} for $|t|<12$, and never passes through $L(s,\chi^2)$. Part (i) corrects the naive bound $2\ell$ from testing only $t=0$. The upper bound in (ii) averages against $|R(t)|^2$ for a finite resonator built from $\lambda$ on a prime-dependent exponent box; its diagonal terms $n=pm$ are negative. Part (iii) compares two smoothed Euler-product truncations, at cutoffs $x$ and $Y=(\log\mathcal Q_*)^2$ with $\mathcal Q_*:=\ell(x\ell+3)$, before discarding prime powers, so that the prime Mertens constant cancels and the transition bands cost $O(1/\ell)$. In particular, writing $E_\lambda(x):=N_\lambda(x)-\ell+\log\ell$, under GRH
\[
-\log2\le\liminf_{x\to\infty}E_\lambda(x)\le\limsup_{x\to\infty}E_\lambda(x)\le0;
\]
no existence or value of a limiting constant is asserted. The proofs are in Appendix~\ref{app:N}.

\subsection*{Notation}
$e(u):=\exp(2\pi iu)$, $\one_E$ is an indicator, $\pi(y)$ counts primes up to $y$, $\theta(y):=\sum_{p\le y}\log p$, and $\Lambda$ is the von Mangoldt function. Constants $C,c$ are absolute and effective; numbered constants are fixed when introduced. $O^*(E)$ denotes a quantity of absolute value at most $E$. From Rosser--Schoenfeld \cite{RS62} we use: $\pi(y)<1.26\,y/\log y$ for $y>1$; $\pi(2y)-\pi(y)>\frac35y/\log y$ for $y\ge20.5$; $\theta(y)<1.02\,y$ for $y>0$; $\sum_{p\le y}(\log p)/p<\log y$ for $y>1$; $\sum_{p\le y}1/p>\log\log y+B_M-1/(2\log^2y)$ for $y>1$ and $<\log\log y+B_M+1/(2\log^2y)$ for $y\ge286$.

\section{Reductions and parameters}\label{sec:red}

\begin{lemma}\label{lem:B}
$B\le15\fh$, $a_i,|b_i|\le B$, and $|\Delta|\le B^2$.
\end{lemma}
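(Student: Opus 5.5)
The plan is to bound $K$ first, then each of the three claims in turn.

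\textbf{Bounding $K$.} Since $a_i\ge1$, we have $R_0\le\max(1,|b_1|,|b_2|)\le\fh$. Hence
\[
K=\lceil4R_0\rceil\le4R_0+1\le5R_0 ,
\]
where the last step uses $R_0\ge1$.

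\textbf{The bound $B\le15\fh$.} The main point is to avoid the crude estimate $K(a_1+a_2)\le5\fh\cdot2\fh$, which is quadratic in $\fh$. Instead we exploit the cancellation between $a_i$ and $|b_i|/a_i$. Since $R_0$ is a maximum of three terms, it suffices to show $\lceil4t\rceil(a_1+a_2)\le15\fh$ for each $t\in\{1,|b_1|/a_1,|b_2|/a_2\}$ that attains the maximum.
\begin{itemize}
\item If $R_0=1$, then $K=4$ and $B=4(a_1+a_2)\le8\fh$.
\item If $R_0=|b_1|/a_1>1$, then $K\le4|b_1|/a_1+1$, so
\[
B\le\Bigl(\frac{4|b_1|}{a_1}+1\Bigr)(a_1+a_2)=4|b_1|+\frac{4|b_1|a_2}{a_1}+a_1+a_2 .
\]
The awkward term is $4|b_1|a_2/a_1$, which is not obviously $O(\fh)$. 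This is where the determinant enters: from $a_2b_1=a_1b_2-\Delta$ we get
\[
\frac{|b_1|a_2}{a_1}\le|b_2|+\frac{|\Delta|}{a_1}\le|b_2|+|\Delta|\le2\fh .
\]
Collecting terms gives $B\le4\fh+8\fh+2\fh=14\fh\le15\fh$.
\item The case $R_0=|b_2|/a_2$ is symmetric.
\end{itemize}
This use of $\Delta$ is the only nontrivial step, and it explains why $\Delta$ is included in $\fh$.

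\textbf{The bound $a_i,|b_i|\le B$.} We have $K\ge4$, so $a_i\le a_1+a_2\le B$. Also $|b_i|\le R_0a_i\le Ka_i/4\le B$.

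\textbf{The bound $|\Delta|\le B^2$.} Using the previous bounds,
\[
|\Delta|\le a_1|b_2|+a_2|b_1|\le2B\cdot\max_i a_i .
\]
Since $a_i\le B/4$, this is at most $2B\cdot B/4\le B^2$.
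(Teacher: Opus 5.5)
Your proof is correct and follows essentially the same route as the paper. Both arguments rest on using $a_2b_1=a_1b_2-\Delta$ to bound the cross term $|b_1|a_2/a_1$ by $|b_2|+|\Delta|$, with the remaining claims following from $R_0\le K/4$ and $K\ge4$. The only differences are cosmetic. You use $K\le4R_0+1$ directly and obtain $14\fh$, where the paper goes through $K\le5R_0$. You also bound $|\Delta|$ via $|b_i|\le B$ and $a_i\le B/4$, whereas the paper uses $|\Delta|\le 2aR_0$.
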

\begin{proof}
$K\le4R_0+1\le5R_0$. If $R_0=1$ then $B\le5(a_1+a_2)\le10\fh$. If $R_0=|b_1|/a_1$ then $R_0(a_1+a_2)=|b_1|+a_2|b_1|/a_1\le|b_1|+|b_2|+|\Delta|/a_1\le3\fh$, using $a_2b_1=a_1b_2-\Delta$; so $B\le15\fh$. The case $R_0=|b_2|/a_2$ is symmetric. Also $a_i\le B$, $|b_i|\le a_iR_0\le B$, and $|\Delta|\le|a_1b_2|+|a_2b_1|\le2aR_0\le K^2(a_1+a_2)^2=B^2$.
\end{proof}

\subsection{Window reduction}
\begin{lemma}\label{lem:window}
Let $0<\eps\le1/10$, $\omega\ge e^{10/\eps}$, $x\ge\omega$, $|f|\le1$. Put $T:=\omega^{\eps/10}$ and $\omega':=\min(\omega,x/T)$. Then $\omega^{1/2}\le\omega'\le\omega$, $x/\omega'\ge\omega'^{\,\eps/10}$, and
\[
\Bigl|\sum_{x/\omega<n\le x}\frac{f(n)}n\Bigr|\le\Bigl|\sum_{x/\omega'<n\le x}\frac{f(n)}n\Bigr|+\frac\eps5\log\omega .
\]
\end{lemma}
\begin{proof}
The sums differ by the terms $x/\omega<n\le\max(x/\omega,T)$, of total absolute value $\le\sum_{n\le T}1/n\le\log T+1\le\frac\eps5\log\omega$ as $\log\omega\ge10/\eps$. If $\omega'=\omega$ then $x/\omega\ge T=\omega^{\eps/10}$. If $\omega'=x/T<\omega$ then $x/\omega'=T\ge\omega'^{\,\eps/10}$ and $\omega'=x/T\ge\omega^{1-\eps/10}\ge\omega^{1/2}$.
\end{proof}

We prove the core estimate at tolerance $0<\eta\le1/10$, assuming $\omega\ge A_0(\eta)$ and $x/\omega\ge\omega^{\eta/20}$, with $A_0(\eta)$ defined below. To deduce Theorem~\ref{thm:A} at tolerance $\eps$, apply Lemma~\ref{lem:window} with $\eta=\eps/2$. The parameter choices below ensure $A_0(\eta)\ge e^{10/\eta}$ and $A_0(\eta)^{\eta/10}\ge\fh$. Thus, if $\omega\ge A_0(\eps/2)^2$, the retained ratio satisfies $\omega'\ge\sqrt\omega\ge A_0(\eta)$ and
\[
x/\omega'\ge(\omega')^{\eta/10}\ge(\omega')^{\eta/20}.
\]
The retained sum and the discarded terms therefore have total absolute value at most
\[
\eta\log\omega'+\frac{\eta}{5}\log\omega\le\frac{3\eps}{5}\log\omega\le\eps\log\omega.
\]
Any index with $L_i(n)\le0$ satisfies $n\le\fh\le T$ and is absent from the retained window. Throughout \S\S2--6 we write $\eps$ for the core tolerance and assume
\begin{equation}\label{eq:standing}
x/\omega\ge\omega^{\eps/20}.
\end{equation}
The parameter choices ensure $L_i(n)\ge1$ for every $n$ in this window. Under \eqref{eq:standing}, for $\omega\ge A_0$,
\begin{equation}\label{eq:mass}
W:=\sum_{x/\omega<n\le x}\frac1n=\log\omega+O^*(2\omega/x),\qquad \frac\omega x\le\omega^{-\eps/20}\le A_0^{-\eps/20},\qquad\log\frac x\omega\ge\frac{\eps}{20+\eps}\log x .
\end{equation}

\subsection{Parameters}\label{sec:param}
Let $C_{\rm L}\ge1$ be the implied constant in \cite[Theorem 1.3]{MRT15}, $C_*:=10C_{\rm L}$ (Lemma~\ref{lem:mrt}), let $C_{\rm GT}\ge1$, $0<c_{\rm GT}\le1$ be the implied constants in \cite[Prop.\ 4.2, Prop.\ 3.1(i)]{GT06} for $k=1$, $F(n)=n$, exponent $4$, $R=\lfloor N^{1/10}\rfloor$, and let $N_{\rm GT}\ge1$ be an effective threshold such that both propositions hold for all $N\ge N_{\rm GT}$. Put $\vt:=1/100$, and define
\begin{align}
\delta&:=\frac{\eps^2\vt}{1440^2},\qquad L:=\log\frac{9\vt}{\delta}=\log\frac{9\cdot1440^2}{\eps^2},\qquad S:=\frac{2B\log3}{\delta},\label{eq:delta}\\
C_5&:=102400\cdot480^4\,C_{\rm GT}^4c_{\rm GT}^{-2},\notag\\
B_0&:=\max\Bigl\{360\,C_*C_5\,a^5|\Delta|K^2\vt^{-2}\eps^{-5},\ \ 10^{12}(B+\fh+1)^4\eps^{-2},\ \ \log(2N_{\rm GT})\Bigr\},\label{eq:Hminus}\\
H_-&:=\bigl\lceil\exp(2B_0\log(2B_0))\bigr\rceil,\notag\\
H_+&:=\exp\Bigl(H_-+2\exp\bigl((\log\log H_-+L)\,e^{S}\bigr)\Bigr),\qquad U:=BH_+,\qquad A_0:=\exp(U^4),\qquad Q:=4U.\label{eq:Hplus}
\end{align}
Since $v/\log v>B'$ for $v=2B'\log(2B')$, $B'\ge1$ (as $\log v<2\log(2B')$),
\begin{equation}\label{eq:Hminusprop}
\frac{\log H_-}{\log\log H_-}> B_0 .
\end{equation}
Sizes: $a\le B^2$, $K\le B$, and Lemma~\ref{lem:B} gives $|\Delta|,\fh\le B^2$. Thus $B_0\le CB^{14}\eps^{-5}$ and $\log H_-\le CB^{14}\eps^{-5}\log(B/\eps)$ (with $C_*,C_5,N_{\rm GT}$ fixed), so $\log\log H_-+L=O(\log(B/\eps))$; and since $H_-+2e^{Z}\le3e^{\max(\log H_-,Z)}$,
\begin{equation}\label{eq:Asize}
\begin{aligned}
\logk3H_+&\le\log2+\max\bigl\{\log\log H_-,\ S+\log(\log\log H_-+L)\bigr\}\le C_7B\eps^{-2},\\
\logk4A_0&\le\log4+\log B+\logk3H_+\le C_8B\eps^{-2}.
\end{aligned}
\end{equation}
We record: $H_+\ge\exp(H_-)$; $H_-\ge B_0^2\ge(10^{6}(B+\fh+1)/\eps)^4$ and $\log H_-\ge100$; $U\ge H_+\ge H_-\ge\fh^2$; and
\begin{equation}\label{eq:Abig}
\log A_0=U^4\ge\frac{40}{\eps}\bigl(2\vt U+U+\log(8U)\bigr),\qquad U\ge400\log(8U),\qquad \frac{\eps U^4}{20}\ge\log(8000)+4\log U .
\end{equation}
Theorem~\ref{thm:A} follows from the core statement with $A=A_0(\eps/2)^2$, and \eqref{eq:Asize} gives $\logk4A\le\log2+4C_8B\eps^{-2}\le C_1B\eps^{-2}$. The rate form \eqref{eq:rate} follows by choosing $\eps:=(15C_1\fh/\logk4\omega)^{1/2}\le1/10$ when $\logk4\omega\ge1500C_1\fh$.

\subsection{The probabilistic set-up}
Assume \eqref{eq:standing}, $x\ge\omega\ge A_0$, and, for contradiction,
\begin{equation}\label{eq:contra}
\Bigl|\sum_{x/\omega<n\le x}\frac{g_1(L_1(n))g_2(L_2(n))}n\Bigr|>\eps\log\omega .
\end{equation}
Let $\bn$ be the random integer in $(x/\omega,x]$ with $\PP(\bn=n)=1/(nW)$, and $C_0:=\E\,g_1(L_1(\bn))g_2(L_2(\bn))$. By \eqref{eq:mass},
\begin{equation}\label{eq:C0}
|C_0|\ge\eps\,\frac{\log\omega}{W}\ge\frac\eps2 .
\end{equation}

\begin{lemma}[Approximate affine invariance]\label{lem:aff}
Let $E_{\rm aff}:=\dfrac{2\log(2Q)}{\log\omega}+\dfrac{10\,Q\omega}x$. Then $E_{\rm aff}\le\dfrac1{100U^3}$. For integers $1\le q\le Q$, $|r|\le Q$, and any $X:\Z\to\mathbb C$ with $|X|\le M_X$,
\[
\Bigl|\E\bigl[X(\bn)\one_{\bn\equiv r\,(q)}\bigr]-\frac1q\E\,X(q\bn+r)\Bigr|\le M_XE_{\rm aff},
\]
and $\TV(\bn,\bn+r)\le E_{\rm aff}$. In particular, for $q\le Q$ and any residue $r$ (taken in $[0,q)$), $\PP(\bn\equiv r\ (q))\le1/q+E_{\rm aff}$.
\end{lemma}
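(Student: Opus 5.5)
We first check the size bound $E_{\rm aff}\le 1/(100U^3)$, and then prove the comparison by a change of variables. The first term of $E_{\rm aff}$ is $2\log(8U)/\log\omega$. Since $\log\omega\ge\log A_0=U^4$ and \eqref{eq:Abig} gives $U\ge400\log(8U)$, this term is at most $2\log(8U)/U^4\le 1/(200U^3)$. For the second term, \eqref{eq:standing} gives $\omega/x\le\omega^{-\eps/20}\le\exp(-\eps U^4/20)$. The last inequality of \eqref{eq:Abig} says this is at most $1/(8000U^4)$. Hence $10Q\omega/x=40U\omega/x\le 40/(8000U^3)=1/(200U^3)$. Adding the two terms gives the bound.

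For the comparison, write both sides as weighted sums over the window $I=(x/\omega,x]$:
\[
W\,\E[X(\bn)\one_{\bn\equiv r\,(q)}]=\sum_{n\in I,\ n\equiv r\,(q)}\frac{X(n)}{n},\qquad \frac{W}{q}\,\E X(q\bn+r)=\sum_{m\in I}\frac{X(qm+r)}{qm}.
\]
Substitute $n=qm+r$ in the first sum. It becomes a sum over those $m$ with $qm+r\in I$, i.e.\ $m\in((x/\omega-r)/q,(x-r)/q]$, of $X(qm+r)/(qm+r)$.

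Three discrepancies separate the two sums.

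The first is the weight change $1/(qm+r)$ versus $1/(qm)$. The difference is at most $|r|/(qm)^2\le Q/(q^2m^2)$.

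The second is the range mismatch. The substituted range is roughly $(x/(q\omega),x/q]$, while the second range is $(x/\omega,x]$. These are both logarithmic windows of the same length $\log\omega$, but they are shifted by the factor $q$. The terms lying in only one of them come from two blocks, $m\in(x/(q\omega),x/\omega]$ and $m\in(x/q,x]$, each carrying $1/m$-mass about $\log q$. I expect the main accounting to sit here. This mismatch contributes $O(M_X\log q)$ to the difference of the two $W$-normalized sums. After dividing by $W\ge\tfrac12\log\omega$ it gives the $2\log(2Q)/\log\omega$ term; the factor $2\log(2Q)$ absorbs the two boundary blocks, the $\pm r$ shifts of the endpoints, and rounding.

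The third is the sum of the weight errors over $m\ge x/(q\omega)-1$. It is at most about $Q\omega/(qx)\cdot$ a constant, and after normalising by $W\ge1$ it fits inside $10Q\omega/x$. Endpoint shifts by $|r|/q\le Q$ near the lower end each carry weight at most $q\omega/x$, which gives further $O(Q\omega/x)$ contributions.

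The main obstacle is keeping the constants within $2\log(2Q)$ and $10Q$ after normalising by $W=\log\omega+O^*(2\omega/x)$ from \eqref{eq:mass}. I would bound $1/W\le 2/\log\omega$ for the $\log$ term. For the other terms I would use only $W\ge1$, which holds because $\log\omega\ge U^4$.

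The remaining assertions are special cases. The total-variation claim is the case $q=1$: for $|X|\le1$, $|\E X(\bn)-\E X(\bn+r)|\le E_{\rm aff}$. Here the range mismatch is only at the endpoints, of mass $O(|r|\omega/x)$, and the weight change contributes the same order, so the bound is $\le 10Q\omega/x$. Taking the supremum over $X$ bounds the total-variation distance. Finally, the residue-class bound follows by taking $X\equiv1$.
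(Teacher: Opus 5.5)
Your route is the paper's. You substitute $n=qm+r$, compare the weights $1/(qm+r)$ and $1/(qm)$, account for the two end blocks $m\in(x/(q\omega),x/\omega]$ and $m\in(x/q,x]$, divide by $W$, and read off the total-variation and residue-class claims as the cases $q=1$ and $X\equiv1$. Your numerical check of $E_{\rm aff}\le1/(100U^3)$ is exactly the paper's.

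One step does not give the stated constant as you have set it up, and the constant matters here: the bound $E_{\rm aff}\le1/(100U^3)$ is obtained with exactly that $2$.
\begin{itemize}
\item You bound the mismatch by the $1/m$-mass of the two blocks, about $2\log q$, and then use $1/W\le2/\log\omega$. That gives $4\log q/\log\omega$.
\item This exceeds $2\log(2Q)/\log\omega$ as soon as $q=Q>2$. So the sentence ``after dividing by $W\ge\frac12\log\omega$ it gives the $2\log(2Q)/\log\omega$ term'' does not follow.
\end{itemize}
There are two easy fixes.
\begin{itemize}
\item Use $W\ge\log\omega-1$, which \eqref{eq:mass} gives directly. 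This is what the paper does: $2\log q/(\log\omega-1)\le2\log(2Q)/\log\omega$ is equivalent to $\log q\le\log2\,(\log\omega-1)$, which holds with enormous room.
\item Alternatively, note that the block terms carry weight $1/(qm)$, not $1/m$. Their mass is then about $(2\log q)/q$, and $4\log q/q\le2\log(2Q)$ for all $q\ge1$, so even your cruder bound on $1/W$ suffices.
\end{itemize}

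You should also record explicitly the input $|r|\le Q\le x/(2\omega)$, which comes from \eqref{eq:mass} and \eqref{eq:Abig}.
\begin{itemize}
\item It gives $qm\ge x/(2\omega)$ on the whole substituted range. Your lower limit $m\ge x/(q\omega)-1$ should read $m>(x/\omega-|r|)/q$; since $|r|/q$ can be as large as $Q$, the ``$-1$'' is not right as stated.
\item It is also what controls the weight comparison when $r<0$. There $|1/(qm+r)-1/(qm)|=|r|/(qm(qm+r))$ exceeds your bound $|r|/(qm)^2$, by at most a factor $2$.
\end{itemize}
Neither of these last two points threatens the $10Q\omega/x$ term.
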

\begin{proof}
As in \cite[Lemma 2.5]{Tao16}. Write $n=qm'+r$. By \eqref{eq:mass} and \eqref{eq:Abig}, $|r|\le Q\le x/(2\omega)$, so $qm'\ge n-|r|\ge x/(2\omega)$ and $1/n=(1/qm')(1+O^*(4Q\omega/x))$. The range $x/\omega<qm'+r\le x$ differs from $x/\omega<m'\le x$ by two end ranges of $1/m'$-mass $\le\log q+2Q\omega/x$ each. Dividing by $W\ge\log\omega-1$ gives the first claim; the total-variation bound is the case $q=1$ tested on indicator functions; the last claim is the first with $X\equiv1$. Numerically, $\log\omega\ge U^4$ and $\omega/x\le e^{-\eps U^4/20}$, so $E_{\rm aff}\le2\log(8U)U^{-4}+40Ue^{-\eps U^4/20}\le1/(100U^3)$ by \eqref{eq:Abig}.
\end{proof}

\noindent\textbf{Accounting for $E_{\rm aff}$.} The lemma is applied with $q\in\{1,p\}$ and $|r|\le\max(2KH_+,\,|b_i|)\le Q$ ($|b_i|\le B\le U$); in Lemma~\ref{lem:mrt} with $q=c\le B$, $r=d$, $|d|\le B$, and with $q=1$, $1\le r\le H_-$; and, for M\"obius, with $q=p$ and the residue of $-b_i\bar a_i$ modulo $p$. The multiplicities are stated at each use; every accumulated error is bounded there, and none exceeds $\eps KH/(1000\log H)$ (Proposition~\ref{prop:main}) or $3$ in entropy (Lemma~\ref{lem:subadd}).

\subsection{Medium primes and elementary conditions}\label{sec:Hminus}
For $H\ge H_-$ let $\cP_H$ be the set of primes in $(\vt H/2,\vt H]$ and $P_H:=\prod_{p\in\cP_H}p$. From $H_-\ge(10^6(B+\fh+1)/\eps)^4$ and $\log H_-\ge100$ one checks, for all $H\ge H_-$:
\begin{gather}
|\cP_H|\le\frac{2\vt H}{\log H},\qquad\frac1{4\log H}\le\sum_{p\in\cP_H}\frac1p\le\frac4{\log H},\qquad\log P_H\le1.02\,\vt H,\label{eq:PH}\\
\frac{\vt H}2>\max\bigl(a_1,a_2,(2aKH)^{1/10}\bigr),\qquad 12\log H\le\delta H,\qquad H^{1/20}\ge2,\label{eq:Hcond}\\
H\ge\frac{3200}{\eps\vt},\qquad 2aKH\ge N_{\rm GT}.\notag
\end{gather}
(For the prime count, $\log(\vt H)\ge0.63\log H$ as $H^{0.37}\ge100$, and $\pi(\vt H)-\pi(\vt H/2)\ge\frac{3}{5}\cdot\frac{\vt H/2}{\log(\vt H/2)}$ as $\vt H/2\ge20.5$; for the cutoff, $2aK\le B^2$ and $H^{0.9}>200B^{1/5}$; for the entropy condition, $\log H\le\sqrt H$ and $\sqrt H\ge12/\delta$.)

\section{The short-interval input}\label{sec:mrt}

\begin{lemma}\label{lem:mrt}
Let $u_-:=3C_*\,\dfrac{\log\log H_-}{\log H_-}$ with $C_*=10C_{\rm L}$. For $g\in\{\lambda,\mu\}$, all integers $J,c,d$ with $H_-\le J\le U$, $1\le c\le B$, $|d|\le B$, and all $\alpha\in\R$,
\begin{equation}\label{eq:2.8}
\E\Bigl|\frac1J\sum_{j=1}^Jg(\bn+j)e(\alpha j)\Bigr|\le u_-,\qquad
\E\Bigl|\frac1J\sum_{j=1}^Jg(c\bn+d+j)e(\alpha j)\Bigr|\le2c\,u_-,\qquad
\bigl|\E\,g(c\bn+d)\bigr|\le2c\,u_- .
\end{equation}
\end{lemma}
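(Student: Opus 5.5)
The plan is to deduce all three bounds from the Matom\"aki--Radziwi\l\l--Tao estimate \cite[Theorem 1.3]{MRT15}. For $\lambda$ and $10\le J\le X$ that estimate reads
\[
\sup_\alpha\int_X^{2X}\Bigl|\sum_{x<n\le x+J}\lambda(n)e(\alpha n)\Bigr|dx\le C_{\rm L}\Bigl(\frac{\log\log J}{\log J}+\frac1{\log^{1/700}X}\Bigr)JX.
\]

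\textbf{First bound.} We pass from the log-weighted law of $\bn$ to dyadic natural averages. Split $(x/\omega,x]$ into dyadic blocks $(X,2X]$ and partial-sum the weight $1/(nW)$. On each block the weight is constant up to a factor $1+O(1/X)$. Since $|e(\alpha j)|=1$, the phase $e(\alpha n)$ may be replaced by $e(\alpha j)$. Replacing the integral over $x$ by the sum over integers $n$ costs $O(J/X)$.

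Every $X\ge x/\omega\ge\omega^{\eps/20}\ge A_0^{\eps/20}$ is astronomically large compared with $U\ge J$. Hence the term $\log^{-1/700}X$ and all the boundary terms are $\le\log\log H_-/\log H_-$. Here \eqref{eq:Abig} shows $\log X\ge\eps U^4/20$, far beyond $(\log H_-)^{700}$, because $U\ge H_+\ge e^{H_-}$. The function $u\mapsto\log\log u/\log u$ is decreasing for $u\ge e^e$, so $J\ge H_-$ gives the bound $C_{\rm L}(1+o(1))\log\log H_-/\log H_-$. The endpoint blocks that straddle $x/\omega$ or $x$ cost $O(\log 2/W)$, which is negligible. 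Altogether this gives $\le u_-$, with room to spare from the factor $3\cdot10$.

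\textbf{Extension to $\mu$.} Write $\mu=\lambda\cdot\mathbf 1_{\rm sqfree}$ and use $\mu(n)=\sum_{d^2\mid n}\mu(d)\lambda(n/d^2)$. For $d\le D$ the inner sums over $n\equiv0\ (d^2)$ are handled as follows: rescale $n=d^2m$ and apply MRT to $\lambda$ on intervals of length $J/d^2$, with the phase $\alpha d^2$. The contribution from $d$ is then bounded by $d^{-2}$ times the $\lambda$ bound, at least while $J/d^2\ge J^{1/2}$. Moduli $d>D$ contribute $\ll J\sum_{d>D}d^{-2}+D\ll J/D$ trivially. Choosing $D=\log H_-$ keeps everything within $C_*=10C_{\rm L}$.

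The real difficulty is the loss in $\log\log(J/d^2)/\log(J/d^2)$. For $d\le\log J$ the increase is only a factor $1+o(1)$, and $\sum d^{-2}<1.65$. This is where I expect the main bookkeeping obstacle, namely fitting everything into the constant $10$. An alternative is to apply affine invariance (Lemma~\ref{lem:aff}) to the progressions $\bn\equiv r\ (d^2)$. That costs $d^2E_{\rm aff}$ per residue, which is negligible.

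\textbf{Second bound.} For $c\bn+d+j$, the integers $c\bn+d$ with $\bn$ log-distributed are essentially log-distributed on the progression $d\bmod c$. Write $\one_{m\equiv d\,(c)}=\frac1c\sum_{k\bmod c}e(k(m-d)/c)$. Then dominate
\[
\E_{\bn}|\cdots|\le c\,\E_{\mathbf m}\bigl[\one_{\mathbf m\equiv d\,(c)}|\cdots|\bigr]+(\text{error}),
\]
where $\mathbf m$ is log-distributed on $(cx/\omega+d,cx+d]$. Lemma~\ref{lem:aff} with $q=c$, $r=d$, applied to the same law, supplies the comparison, with an error $E_{\rm aff}$. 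Drop the indicator (the summand is nonnegative), and apply the first bound to $\mathbf m$ in the window with $x$ replaced by $cx$. The result is $\le c(u_-+E_{\rm aff})\le2cu_-$.

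\textbf{Third bound.} This follows from the second. By Lemma~\ref{lem:aff} with $q=1$, $\TV(\bn,\bn+j)\le E_{\rm aff}$ for $1\le j\le J$, so $\E\,g(c\bn+d)$ is within $E_{\rm aff}$ of the average over $j$ of $\E\,g(c(\bn+j)+d)$. This average is a sum of step $c$, not $1$. To handle the step, write $\frac1J\sum_jg(c\bn+d+cj)$ as the restriction of an interval sum of length $cJ$ to the residues $\equiv0\ (c)$. Detect the restriction with additive characters $e(kj/c)$; this is exactly why the second bound is uniform in $\alpha$. Each of the $c$ characters contributes at most $2u_-$ after normalisation, so the total is $2cu_-$ up to $E_{\rm aff}$. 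Taking $J=H_-$ completes the proof.
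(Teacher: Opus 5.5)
\textbf{Verdict.} The first two bounds are correct. The argument for the third bound has a genuine gap: it proves $2c^2u_-$, not the stated $2cu_-$.

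\textbf{First two bounds.} These follow the paper's route in substance: MRT plus partial summation for $\lambda$, and the decomposition $\mu(n)=\sum_{d^2\mid n}\mu(d)\lambda(n/d^2)$ for $\mu$. Your cutoff $D=\log H_-$ in place of $J^{1/4}$ is fine. For the dilated sum you use the comparison of Lemma~\ref{lem:aff} with $q=c$, $r=d$ and then drop the nonnegative indicator, as the paper does. Two slips there are harmless:
\begin{itemize}
\item On $(X,2X]$ the weight $1/n$ varies by a factor $2$, not $1+O(1/X)$. The slack absorbs this, or genuine partial summation avoids it.
\item The tail $d>D$ cannot be bounded pointwise: by the Chinese remainder theorem a single window can consist entirely of integers each divisible by the square of a large prime. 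So the multiples of $d^2$ must be counted after averaging over $\bn$, as the paper does.
\end{itemize}

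\textbf{The gap in the third bound.} After averaging over shifts of step $c$ you have
\[
\frac1J\sum_{j\le J}g(c\bn+d+cj)=\sum_{k=0}^{c-1}\frac1{cJ}\sum_{i\le cJ}g(c\bn+d+i)\,e\Bigl(\frac{ki}{c}\Bigr).
\]
The factor $\frac1c$ from expanding $\one_{c\mid i}$ is already used up in renormalising from $\frac1J$ to $\frac1{cJ}$. Each of the $c$ terms is therefore a full normalised average of the \emph{dilated} sequence. For such an average the second bound gives only $2cu_-$; already the term $k=0$ is controlled only at that level. So your argument yields $|\E\,g(c\bn+d)|\le2c^2u_-+E_{\rm aff}$. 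The factor $c$ is paid once for the dilation and once for the number of characters, so your ``$2u_-$ per character after normalisation'' is off by a factor $c$.

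\textbf{The fix} is to reverse the order of operations, as the paper does.
\begin{itemize}
\item First undilate. Lemma~\ref{lem:aff} with $q=c$, $r=d$, $X=g$ gives $|\E\,g(c\bn+d)|\le c|\E\,h(\bn)|+cE_{\rm aff}$, where $h(v):=g(v)\one_{v\equiv d\,(c)}$.
\item Then average $h(\bn+j)$ over unit shifts $1\le j\le H_-$, and expand $\one_{n+j\equiv d\,(c)}$ in additive characters modulo $c$.
\item The phase $e(k(n-d)/c)$ depends only on $n$ and disappears under the absolute value.
\item Each inner sum $\frac1J\sum_{j\le J}g(\bn+j)e(kj/c)$ is controlled by the \emph{first}, undilated bound, which is uniform in $\alpha$ and gives $u_-$ at $\alpha=k/c$.
\item The prefactor $\frac1c\sum_k$ averages these to $u_-$.
\end{itemize}
This gives $|\E\,h(\bn)|\le u_-+E_{\rm aff}$, and hence $|\E\,g(c\bn+d)|\le cu_-+2cE_{\rm aff}\le2cu_-$.

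Your weaker bound $2c^2u_-$ would still suffice for the only later use, Corollary~\ref{cor:signs}, since $B_0$ contains a factor $a^5$ to spare. It does not, however, prove the lemma as stated.
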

\begin{proof}
\emph{Step 1: the integral bound.} By \cite[Theorem 1.3]{MRT15}, for $10\le J\le T$,
\begin{equation}\label{eq:mrt}
\sup_\alpha\int_0^T\Bigl|\sum_{y\le n\le y+J}\lambda(n)e(\alpha n)\Bigr|dy\le C_{\rm L}\,\rho(J,T)\,JT,\qquad\rho(J,T):=\frac{\log\log J}{\log J}+(\log T)^{-1/700}.
\end{equation}
We claim the same holds for $\mu$ with $C_*=10C_{\rm L}$ in place of $C_{\rm L}$ when $J\ge e^{100}$ and $T\ge J^2$. Write $\mu(n)=\sum_{d^2\mid n}\mu(d)\lambda(n/d^2)$ and $D:=\lfloor J^{1/4}\rfloor$. For $d\le D$, the terms with $d^2\mid n$ in $[y,y+J]$ are $\mu(d)\sum_{y/d^2\le v\le(y+J)/d^2}\lambda(v)e(\alpha d^2v)$, a sum over a window of length $J/d^2\in[J^{1/2},J]$; substituting $y=d^2y'$ and applying \eqref{eq:mrt} at length $\lceil J/d^2\rceil$ and scale $T/d^2$ (allowing one endpoint term, and using $\log\log(J/d^2)/\log(J/d^2)\le2\log\log J/\log J$ and $\log(T/d^2)\ge\frac34\log T$), the contribution of $d\le D$ is at most $\sum_{d\le D}\bigl(4C_{\rm L}\rho JT/d^2+T\bigr)\le8C_{\rm L}\rho JT+DT$. For $d>D$, count multiples of $d^2$ before integrating: $\int_0^T\#\{n\in[y,y+J]:d^2\mid n\}dy\le J(T+J)/d^2$, so the total is $\le2JT\sum_{d>D}d^{-2}\le4TJ^{3/4}$. Hence the $\mu$ integral is $\le JT\bigl(8C_{\rm L}\rho+J^{-3/4}+4J^{-1/4}\bigr)\le10C_{\rm L}\rho JT$, as $\rho\ge\log\log J/\log J\ge4J^{-1/4}+J^{-3/4}$ for $J\ge e^{100}$.

\emph{Step 2: the logarithmic average.} Put $F(n):=|J^{-1}\sum_{j=1}^Jg(n+j)e(\alpha j)|$ and $S(T):=\sum_{n\le T}F(n)$. For $y\in(n-1,n)$ the window $[y,y+J]$ contains exactly the integers $n,\dots,n+J-1$, so $|\sum_{y\le m\le y+J}g(m)e(\alpha m)|=JF(n-1)$; hence $S(T)\le J^{-1}\int_0^{T+1}|\cdots|dy\le C_*\rho_0(T+1)$ for $T\ge x/(2\omega)$, where $\rho_0:=\log\log J/\log J+(\log(x/2\omega))^{-1/700}$ (note $T\ge x/(2\omega)\ge A_0^{\eps/20}/2\ge U^2\ge J^2$ by \eqref{eq:Abig}). By partial summation, since $F\ge0$,
\[
\sum_{x/\omega<n\le x}\frac{F(n)}n=\frac{S(x)}x-\frac{S(x/\omega)}{x/\omega}+\int_{x/\omega}^x\frac{S(T)}{T^2}\,dT\le C_*\rho_0\,(\log\omega+2).
\]
Dividing by $W\ge\log\omega-1$ with $\log\omega\ge U^4$: $\E F(\bn)\le1.01\,C_*\rho_0$. By \eqref{eq:mass}, $\log(x/2\omega)\ge\frac{\eps}{22}U^4$, and $U\ge\exp(H_-)$, $H_-\ge\eps^{-4}$, $H_-\ge e^{100}$ give $(\frac{\eps}{22}U^4)^{-1/700}\le e^{-3H_-/700}\le\log\log H_-/\log H_-$; also $\log\log J/\log J$ is decreasing for $J\ge16$. Hence $\E F(\bn)\le2.02\,C_*\log\log H_-/\log H_-\le u_-$.

\emph{Step 3: dilation.} For the second bound, $\E[F(c\bn+d)]$: by Lemma~\ref{lem:aff} with $q=c$, $r=d$, $M_X=1$, $\frac1c\E F(c\bn+d)\le\E[F(\bn)\one_{\bn\equiv d\,(c)}]+E_{\rm aff}\le\E F(\bn)+E_{\rm aff}$, so $\E F(c\bn+d)\le cu_-+cE_{\rm aff}\le2cu_-$.

\emph{Step 4: the one-point bound.} Let $h(v):=g(v)\one_{v\equiv d\,(c)}$ and $J:=H_-$. Lemma~\ref{lem:aff} with $q=c$, $r=d$, $X=g$ gives $|\E\,g(c\bn+d)|\le c|\E h(\bn)|+cE_{\rm aff}$. For $1\le j\le J$, Lemma~\ref{lem:aff} with $q=1$, $r=j$, $X=h$ gives $\E h(\bn)=\E h(\bn+j)+O^*(E_{\rm aff})$, so $\E h(\bn)=\E\bigl[\frac1J\sum_{j\le J}h(\bn+j)\bigr]+O^*(E_{\rm aff})$. Expanding the congruence indicator in additive characters modulo $c$,
\[
\frac1J\sum_{j\le J}h(n+j)=\frac1c\sum_{k=0}^{c-1}e\Bigl(\frac{k(n-d)}c\Bigr)\cdot\frac1J\sum_{j\le J}g(n+j)e(kj/c),
\]
so $|\frac1J\sum_{j\le J}h(\bn+j)|\le\frac1c\sum_k|\frac1J\sum_{j\le J}g(\bn+j)e(kj/c)|$, whose expectation is at most $u_-$ by the first bound (with $\alpha=k/c$). Hence $|\E h(\bn)|\le u_-+E_{\rm aff}$ and $|\E\,g(c\bn+d)|\le cu_-+2cE_{\rm aff}\le2cu_-$, as $E_{\rm aff}\le1/(100U^3)\le u_-/2$ (note $u_-\ge3/\log H_-\ge3/U$).
\end{proof}

\section{Two blocks and the multiplicativity step}\label{sec:medium}

For integers $H_-\le H\le H_+$ let $\bY_H:=\bn\bmod P_H$ and
\[
\bX_H:=\Bigl(\bigl(g_1(a_1\bn+r)\bigr)_{1\le r\le a_1KH},\ \bigl(g_2(a_2\bn+r)\bigr)_{1\le r\le a_2KH}\Bigr)\in\{-1,0,1\}^{BH}.
\]
We write $x=(x_{1,r},x_{2,r})$ for a value of $\bX_H$. Then
\begin{equation}\label{eq:HX}
0\le\Hh(\bX_H)\le BH\log3 .
\end{equation}
For $p\in\cP_H$ let
\[
J_p:=\{j\in\Z:\ 1\le a_ij+pb_i\le a_iKH\ \ (i=1,2)\}.
\]
This is an interval of integers: with $t_{i,p}:=\lfloor-pb_i/a_i\rfloor$, the constraint $1\le a_ij+pb_i\le a_iKH$ on the integer $j$ reads $t_{i,p}+1\le j\le t_{i,p}+KH$, so
\[
J_p=\{\max(t_{1,p},t_{2,p})+1,\dots,\min(t_{1,p},t_{2,p})+KH\},\qquad|J_p|=\max\bigl(0,\,KH-|t_{1,p}-t_{2,p}|\bigr).
\]
Since $|pb_i/a_i|\le\vt HR_0\le KH/400$, we have $|t_{1,p}-t_{2,p}|\le p|b_1/a_1-b_2/a_2|+1\le KH/200+1$, hence $0.99KH\le|J_p|\le KH$ (using $KH\ge600$). Define
\begin{equation}\label{eq:F}
F(x,y):=\sum_{p\in\cP_H}F_p(x,y),\qquad F_p(x,y):=\sum_{j\in J_p}\one_{y+j\equiv0\,(p)}\,x_{1,a_1j+pb_1}\,x_{2,a_2j+pb_2}\qquad(y\in\Z/P_H\Z).
\end{equation}
For fixed $y,p$ the $j$ lie in one residue class modulo $p$, so
\begin{equation}\label{eq:Fbound}
|F_p(x,y)|\le\frac{KH}p+1\le\frac{2K}{\vt}+1\le\frac{3K}\vt,\qquad|F(x,y)|\le\frac{3K}\vt\cdot\frac{2\vt H}{\log H}=\frac{6KH}{\log H}.
\end{equation}

\begin{proposition}\label{prop:main}
For every $H_-\le H\le H_+$, $\ \bigl|\E\,F(\bX_H,\bY_H)\bigr|\ge L_0:=\dfrac{\eps KH}{24\log H}$.
\end{proposition}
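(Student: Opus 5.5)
The plan is Tao's multiplicativity step, carried out term by term in the original affine coordinates. I will show that each prime $p\in\cP_H$ contributes approximately $|J_p|\,C_0/p$ to $\E\,F(\bX_H,\bY_H)$, and then sum over $p$ using \eqref{eq:C0} and \eqref{eq:PH}.

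\emph{Unfolding.} Since $\bY_H=\bn\bmod P_H$, the indicator $\one_{\bY_H+j\equiv0\,(p)}$ equals $\one_{\bn\equiv-j\,(p)}$. For $j\in J_p$ the indices $a_ij+pb_i$ lie in $[1,a_iKH]$, so the coordinates of $\bX_H$ appearing in $F_p$ are genuinely $g_i(a_i\bn+a_ij+pb_i)$. Hence
\[
\E\,F_p(\bX_H,\bY_H)=\sum_{j\in J_p}\E\bigl[X_{p,j}(\bn)\one_{\bn\equiv-j\,(p)}\bigr],\qquad X_{p,j}(n):=g_1(a_1(n+j)+pb_1)\,g_2(a_2(n+j)+pb_2).
\]
I apply Lemma~\ref{lem:aff} with $q=p\le\vt H\le Q$, $r=-j$ (note $|j|\le2KH_+\le Q$) and $M_X=1$. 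Each summand then equals $\frac1p\E\,X_{p,j}(p\bn-j)+O^*(E_{\rm aff})$. The key identity is
\[
X_{p,j}(pm-j)=g_1(pL_1(m))\,g_2(pL_2(m)).
\]
Because $\bn$ lies in the retained window, $L_i(\bn)\ge1$.

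\emph{Multiplicativity.} For $\lambda$, $\lambda(pv)=-\lambda(v)$ always. For $\mu$, $\mu(pv)=-\mu(v)$ unless $p\mid v$. So $g_1(pL_1)g_2(pL_2)=g_1(L_1)g_2(L_2)$ except on the event that $p\mid L_i(\bn)$ for some $i$ with $g_i=\mu$. Since $p>a_i$ (by \eqref{eq:Hcond}), this event is the residue class $\bn\equiv-b_i\bar a_i\pmod p$. By the last clause of Lemma~\ref{lem:aff}, it has probability at most $1/p+E_{\rm aff}$. Therefore
\[
\E\,F_p=\frac{|J_p|}{p}\Bigl(C_0+O^*\bigl(\tfrac{4}{p}+2E_{\rm aff}\bigr)\Bigr)+O^*\bigl(|J_p|E_{\rm aff}\bigr).
\]

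\emph{Summation.} Summing over $p\in\cP_H$, using $0.99KH\le|J_p|\le KH$, $|C_0|\ge\eps/2$ and $\sum_p1/p\ge1/(4\log H)$, the main term has absolute value at least
\[
0.99\,KH\cdot\frac\eps2\cdot\frac1{4\log H}\ge\frac{\eps KH}{8.1\log H}.
\]
The $1/p^2$ errors total at most
\[
4KH\sum_{p\in\cP_H}p^{-2}\le4KH\,|\cP_H|\Bigl(\frac{2}{\vt H}\Bigr)^2\le\frac{32KH}{\vt H\log H}.
\]
This is at most $\eps KH/(100\log H)$ by $H\ge3200/(\eps\vt)$. The $E_{\rm aff}$ errors number at most $3|\cP_H|KH\le KH^2$, each at most $1/(100U^3)$, and $U\ge H$. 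Their total is therefore negligible, and in particular well below the budget $\eps KH/(1000\log H)$ fixed in the accounting paragraph.

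The conclusion $|\E\,F|\ge\eps KH/(24\log H)$ then follows with room to spare. The main obstacle is not conceptual. It is checking that the residue and translation parameters stay within the range of Lemma~\ref{lem:aff}, namely $|r|\le Q$ and $q\le Q$, and that the Möbius exceptional classes are handled with the correct multiplicity; the sign $(-1)^2=+1$ makes the main term come out cleanly.
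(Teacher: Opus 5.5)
Your proposal is correct and is essentially the paper's argument: you unfold $F_p$ on the class $\bn\equiv-j\ (p)$, transfer with Lemma~\ref{lem:aff} ($q=p$, $r=-j$), use $g(pv)=-g(v)$ off the single residue class $p\mid L_i$, and sum using $|J_p|\ge0.99KH$, \eqref{eq:C0} and \eqref{eq:PH}. The only difference is the order of steps. You apply Lemma~\ref{lem:aff} to the whole product first, so the integrand becomes the $j$-independent $g_1(pL_1(\bn))g_2(pL_2(\bn))$ and the exceptional classes are bounded directly for $\bn$; this is slightly more economical than the paper's per-$(p,j)$ treatment, and your cruder $4/p$ still fits, since $32K/(\vt\log H)\le\eps KH/(100\log H)$ is exactly $H\ge3200/(\eps\vt)$.
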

\begin{proof}
Fix $p\in\cP_H$ and $j\in J_p$. On the event $\bn\equiv-j\ (p)$ write $\bn+j=pm$; then $a_i(\bn+j)+pb_i=p(a_im+b_i)$, so
\[
\one_{\bn+j\equiv0\,(p)}\,g_1(a_1\bn+a_1j+pb_1)\,g_2(a_2\bn+a_2j+pb_2)=\one_{\bn\equiv-j\,(p)}\,g_1\bigl(p\,L_1(m)\bigr)g_2\bigl(p\,L_2(m)\bigr),
\]
where $m:=(\bn+j)/p$. For $g\in\{\lambda,\mu\}$ and $p$ prime, $g(pn)=-g(n)$ unless $p\mid n$, and $|g(pn)+g(n)|\le\one_{p\mid n}$ (with equality to $0$ for $\lambda$). Hence $g_1(pL_1(m))g_2(pL_2(m))=g_1(L_1(m))g_2(L_2(m))+O^*(\one_{p\mid L_1(m)}+\one_{p\mid L_2(m)})$. Lemma~\ref{lem:aff} with $q=p$, $r=-j$ ($|j|\le KH+KH/400\le Q$) gives
\[
\E\bigl[\one_{\bn\equiv-j\,(p)}g_1(L_1(m))g_2(L_2(m))\bigr]=\frac1p\,C_0+O^*(E_{\rm aff}),
\]
and, since $p\nmid a_i$ (by \eqref{eq:Hcond}), the condition $p\mid L_i(m)$ is one residue class modulo $p$ for $m$, so by the last clause of Lemma~\ref{lem:aff} (applied to $m=(\bn+j)/p$ through the same substitution) $\E[\one_{\bn\equiv-j\,(p)}\one_{p\mid L_i(m)}]\le\frac1p(\frac1p+E_{\rm aff})+E_{\rm aff}$. (Explicitly: with $X(n):=\one_{p\mid L_i((n+j)/p)}$ when $p\mid n+j$ and $X:=0$ otherwise, Lemma~\ref{lem:aff} with $q=p$, $r=-j$ gives $\E[X(\bn)\one_{\bn\equiv-j\,(p)}]\le\frac1p\PP(p\mid L_i(\bn))+E_{\rm aff}$, and $p\mid L_i(\bn)$ is the single class $\bn\equiv-b_i\bar a_i\ (p)$.) Altogether the $j$-th term of $\E F_p$ equals $C_0/p+O^*(2/p^2+4E_{\rm aff})$. Summing over $j\in J_p$ and $p\in\cP_H$ ($|J_p|\le KH$ terms each; the lemma is applied three times for each pair $(p,j)$ and twice more for each $p$, for the two classes $p\mid L_i(\bn)$, and the error per term is $2/p^2+(3+2/p)E_{\rm aff}\le2/p^2+4E_{\rm aff}$),
\[
\begin{aligned}
\Bigl|\E F(\bX_H,\bY_H)-C_0\sum_{p\in\cP_H}\frac{|J_p|}p\Bigr|&\le2KH\sum_{p\in\cP_H}\frac1{p^2}+4KH|\cP_H|E_{\rm aff}\\
&\le\frac{16K}{\vt\log H}+\frac{8\vt KH^2}{\log H}\cdot\frac1{100U^3}\le\frac{\eps KH}{100\log H},
\end{aligned}
\]
using $p>\vt H/2$, \eqref{eq:PH}, $H\ge3200/(\eps\vt)$ and $H\le U$. By \eqref{eq:C0}, \eqref{eq:PH} and $|J_p|\ge0.99KH$ the main term has modulus $\ge\frac\eps2\cdot0.99KH\cdot\frac1{4\log H}\ge\frac{\eps KH}{8.1\log H}$, and $\frac1{8.1}-\frac1{100}\ge\frac1{24}$.
\end{proof}

\section{Entropy}\label{sec:entropy}

We use Shannon entropy $\Hh$, conditional entropy, mutual information $\I(\bX;\bY)=\Hh(\bX)+\Hh(\bY)-\Hh(\bX,\bY)\ge0$, relative entropy $\KL(P\|Q)=\sum_zP(z)\log(P(z)/Q(z))$, the standard facts $\Hh(\bX\mid\bY)\le\Hh(\bX)$, $\Hh(\bX,\bY\mid\mathbf Z)\le\Hh(\bX\mid\mathbf Z)+\Hh(\bY\mid\mathbf Z)$, $\Hh(\bX)\le\log N$ for $N$-valued $\bX$, and the continuity estimate \cite{Zha07}: if $P,Q$ are laws on an $N$-point set, $N\ge2$, with $\TV(P,Q)=\eta\le1/2$ then $|\Hh(P)-\Hh(Q)|\le\eta\log(N-1)+h_2(\eta)$, where $h_2(t):=-t\log t-(1-t)\log(1-t)$, with $0\log0:=0$. For completeness, choose a maximal coupling $(V,W)$ of $P,Q$ with $\PP(V\ne W)=\eta$, and put $E:=\one_{V\ne W}$. Then
\[
\Hh(V)-\Hh(W)\le\Hh(V\mid W)\le\Hh(E\mid W)+\Hh(V\mid E,W)\le h_2(\eta)+\eta\log(N-1).
\]
Interchanging $V,W$ proves the estimate. The case $N=1$ is immediate.

\begin{lemma}[Near-uniformity of $\bY_H$]\label{lem:unif}
For $H_-\le H\le H_+$, $\rho_H:=\log P_H-\Hh(\bY_H)$ satisfies $0\le\rho_H\le e^{-2U}$.
\end{lemma}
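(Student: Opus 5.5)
The lower bound $\rho_H\ge0$ is the standard fact that a random variable taking values in a set of size $P_H$ has entropy at most $\log P_H$. For the upper bound, I will show that the law of $\bY_H$ is extremely close to uniform on $\Z/P_H\Z$ in total variation, and then convert this into an entropy estimate with the continuity inequality just recalled.

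First, the size of $P_H$. By \eqref{eq:PH}, $\log P_H\le1.02\,\vt H\le1.02\,\vt U$, so $P_H\le e^{0.0102U}$. This puts $P_H$ far below the scale $x/\omega\ge\omega^{\eps/20}\ge\exp(\eps U^4/20)$. Lemma~\ref{lem:aff} cannot be applied directly with $q=P_H$, because it requires $q\le Q=4U$. Instead I will repeat its proof with modulus $q=P_H$. For a residue $r\in[0,P_H)$, write $n=P_Hm'+r$ and compare
\[
\sum_{x/\omega<n\le x,\ n\equiv r}\frac1n\qquad\text{with}\qquad\frac1{P_H}\sum_{x/\omega<m'\le x}\frac1{m'}.
\]
The relative error in $1/n$ is $O(P_H\omega/x)$. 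The two end ranges each carry $1/m'$-mass at most $\log P_H+O(P_H\omega/x)$, and the resulting discrepancy is divided by $W\ge\log\omega-1$. This gives
\[
\Bigl|\PP(\bY_H=r)-\frac1{P_H}\Bigr|\le\frac1{P_H}\Bigl(\frac{2\log P_H+2}{\log\omega}+10P_H\frac{\omega}{x}\Bigr)+\frac{C}{\log\omega}\cdot\frac{1}{P_H}.
\]
Some care is needed here. The end-range terms should be bounded per residue class, so that the error is relative, $\ll(\log P_H)/(P_H\log\omega)$, rather than absolute. Because all the $m'$ in a class lie in a dyadic range near $x/(\omega P_H)$, the end-range mass per class is about $P_H^{-1}\log P_H$. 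Summing over $r$ then gives
\[
\TV\bigl(\bY_H,\mathrm{Unif}\bigr)\le\eta:=\frac{4\log P_H}{\log\omega}+10P_H^2\frac{\omega}{x}.
\]
Using $\log\omega\ge U^4$ and $\omega/x\le e^{-\eps U^4/20}$, this is $\eta\le U^{-3}$.

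The naive continuity estimate only gives $\rho_H\le\eta\log P_H+h_2(\eta)$. That is about $U^{-2}$, not $e^{-2U}$, so the argument must be sharpened. To get an exponentially small bound I would instead bound
\[
\rho_H=\KL\bigl(\bY_H\,\|\,\mathrm{Unif}\bigr)\le\chi^2=P_H\sum_r\Bigl(\PP(\bY_H=r)-\frac1{P_H}\Bigr)^2.
\]
This requires pointwise relative errors of size $e^{-2U}$, so the $\log P_H/\log\omega$ boundary term has to be removed entirely. The route I would try first is to compute $\PP(\bY_H=r)$ exactly. Since $\sum_{x/\omega<n\le x,\ n\equiv r}1/n$ equals $P_H^{-1}\bigl(\psi(\cdot)\text{-differences}\bigr)$, the identity $\sum_{n\le y,\ n\equiv r}1/n=P_H^{-1}\log y+c_r+O(P_H/y)$ shows that the constant $c_r$ cancels between the endpoints $x$ and $x/\omega$. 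The relative error is therefore $O(P_H^2\omega/x)$ with no $\log P_H$ term.

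This yields pointwise relative error at most $10P_H^2\omega/x\le\exp(0.03U-\eps U^4/20)$. Hence
\[
\chi^2\le\bigl(10P_H^2\omega/x\bigr)^2\le e^{-2U}
\]
by \eqref{eq:Abig}. The main obstacle is exactly this cancellation of the Hurwitz-type constants $c_r$ at the two endpoints, which is what makes the error exponentially small; the rest is bookkeeping.
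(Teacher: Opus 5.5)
Your final argument is correct and is essentially the paper's proof: the paper bypasses Lemma~\ref{lem:aff} and compares each progression sum directly with $\frac1{P_H}\int_{x/\omega}^x dt/t$ (equivalent to your cancellation of the constants $c_r$), obtaining $\PP(\bY_H=r)=\frac1{P_H}(1+\eta_r)$ with $|\eta_r|\le4P_H\omega/(x\log\omega)\le e^{-U}$, so your preliminary total-variation detour is not needed. It then uses $\sum_r\eta_r=0$ and $(1+u)\log(1+u)\le u+u^2$ to get $\rho_H\le\eta^2\le e^{-2U}$, which is the same estimate as your $\KL\le\chi^2$ step.
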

\begin{proof}
For each $r$, $\PP(\bY_H=r)=\frac1W\sum_{n\equiv r\,(P_H)}\frac1n=\frac1{P_H}(1+\eta_r)$ with $|\eta_r|\le\eta:=4P_H\omega/(x\log\omega)$ (compare the sum over the progression with $\frac1{P_H}\int_{x/\omega}^xdt/t$, whose endpoint terms are $\le2\omega/x$ each, and use \eqref{eq:mass} for $W$) and $\sum_r\eta_r=0$. By \eqref{eq:PH}, \eqref{eq:mass} and \eqref{eq:Abig}, $\eta\le4e^{1.02\vt U}e^{-\eps U^4/20}\le e^{-U}$. Then $\rho_H=\sum_r\frac{1+\eta_r}{P_H}\log(1+\eta_r)\le\sum_r\frac{\eta_r+\eta_r^2}{P_H}\le\eta^2$, using $(1+u)\log(1+u)\le u+u^2$ for $|u|\le\frac12$; and $\rho_H\ge0$ as $\Hh\le\log P_H$.
\end{proof}

\begin{lemma}[Relative subadditivity]\label{lem:subadd}
Let $H,k$ be integers with $H_-\le H\le kH\le H_+$. Then
\begin{equation}\label{eq:3.12}
\frac{\Hh(\bX_{kH})}{kH}\le\frac{\Hh(\bX_H)}H-\frac{\I(\bX_H;\bY_H)}H+\frac{1.02\,\vt}k+\frac3H .
\end{equation}
\end{lemma}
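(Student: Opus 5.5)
This follows Tao's relative subadditivity argument \cite[Lemma 3.3]{Tao16}, adapted to the two-block coordinates. The idea is to split the length-$kH$ blocks into $k$ consecutive length-$H$ pieces, condition on $\bY_H$, and use approximate translation invariance. The key structural point is that $\bX_{kH}$ is determined by $k$ shifted copies of $\bX_H$. For $0\le s<k$, let $\bX_H^{(s)}$ denote the pair of blocks $\bigl((g_1(a_1(\bn+sKH)+r))_{r\le a_1KH},(g_2(a_2(\bn+sKH)+r))_{r\le a_2KH}\bigr)$. Since $a_i(\bn+sKH)+r=a_i\bn+(a_isKH+r)$, the $i$-th block of $\bX_{kH}$, indexed by $1\le r\le a_ikKH$, is exactly the concatenation of the $i$-th blocks of $\bX_H^{(0)},\dots,\bX_H^{(k-1)}$. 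Hence $\Hh(\bX_{kH})\le\Hh(\bX_{kH},\bY_H)=\Hh(\bY_H)+\Hh(\bX_{kH}\mid\bY_H)$, and conditional subadditivity gives
\[
\Hh(\bX_{kH})\le\Hh(\bY_H)+\sum_{s<k}\Hh(\bX_H^{(s)}\mid\bY_H).
\]

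The next step compares each term with the unshifted one. The pair $(\bX_H^{(s)},\bY_H)$ is the image of $(\bX_H,\bY_H-sKH \bmod P_H)$ under the shift $\bn\mapsto\bn+sKH$. Since $sKH\le kKH\le KH_+\le Q$, Lemma~\ref{lem:aff} gives $\TV\le E_{\rm aff}$ between the law of $(\bX_H^{(s)},\bY_H)$ and that of $(\bX_H,\bY_H-sKH)$. The second variable is a bijective relabelling of $\bY_H$, so $\Hh(\bX_H,\bY_H-sKH)=\Hh(\bX_H,\bY_H)$. The entropy continuity estimate on a set of size $N=3^{BH}P_H$ then gives
\[
|\Hh(\bX_H^{(s)},\bY_H)-\Hh(\bX_H,\bY_H)|\le E_{\rm aff}(BH\log3+1.02\vt H)+h_2(E_{\rm aff}),
\]
and the marginal entropies $\Hh(\bY_H)$ agree up to the same kind of error. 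So $\Hh(\bX_H^{(s)}\mid\bY_H)=\Hh(\bX_H\mid\bY_H)+O^*(\tau)$, where $\tau$ is twice the above. Since $E_{\rm aff}\le1/(100U^3)$, $BH\le U$, and $h_2(t)\le t\log(e/t)$, we get $k\tau\le k\cdot 3U\log(U)/U^3\le1$, using $k\le H_+\le U$.

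Summing, $\Hh(\bX_{kH})\le\Hh(\bY_H)+k\Hh(\bX_H\mid\bY_H)+1$. Now substitute $\Hh(\bX_H\mid\bY_H)=\Hh(\bX_H)-\I(\bX_H;\bY_H)$, and bound $\Hh(\bY_H)\le\log P_H\le1.02\vt H$ by \eqref{eq:PH}. Dividing by $kH$ gives \eqref{eq:3.12} with the error term $1/(kH)\le3/H$; Lemma~\ref{lem:unif} is not even needed for this direction.

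The main obstacle is the bookkeeping of the accumulated $E_{\rm aff}$ losses: there are $k$ applications, each costing roughly $E_{\rm aff}\cdot BH$ plus a term of size $h_2(E_{\rm aff})$. The $h_2$ term is the delicate one, since $h_2(E_{\rm aff})\approx E_{\rm aff}\log(1/E_{\rm aff})$ is only polynomially small in $U$. I would verify with \eqref{eq:Abig}, i.e. $U\ge400\log(8U)$, that $k\,h_2(E_{\rm aff})\le U\cdot 4\log U/(100U^3)$, which is negligible. This is the accounting promised earlier, that the error is at most $3$ in entropy. A secondary check is that the shift $sKH$ lies within the range $|r|\le Q$ allowed by Lemma~\ref{lem:aff}.
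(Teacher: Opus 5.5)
Your proof is correct and follows the paper's argument essentially step for step. You split $\bX_{kH}$ into the $k$ copies shifted by $sKH$, apply conditional subadditivity given $\bY_H$, compare each $\Hh(\bX_H^{(s)}\mid\bY_H)$ with $\Hh(\bX_H\mid\bY_H)$ via $\TV\le E_{\rm aff}$, a bijective relabelling of the residue and the continuity estimate, and finish with $\Hh(\bY_H)\le\log P_H\le1.02\vt H$.

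The only slack is that you doubled the error unnecessarily. Both conditional entropies condition on the same $\bY_H$, so the $\Hh(\bY_H)$ terms cancel exactly and a single continuity estimate suffices, as in the paper. This is harmless to the conclusion.
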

\begin{proof}
For $0\le i<k$ put $s_i:=iKH$ and $\bX^{(i)}:=\bigl((g_1(a_1\bn+a_1s_i+r))_{r\le a_1KH},(g_2(a_2\bn+a_2s_i+r))_{r\le a_2KH}\bigr)$; then $\bX_{kH}$ is (a rearrangement of) the concatenation of $\bX^{(0)},\dots,\bX^{(k-1)}$. Let $\Psi(m):=(\Phi(m),m\bmod P_H)$ where $\Phi(m)$ is the pair of blocks at $m$, so that $\Psi(\bn)=(\bX_H,\bY_H)$ and $\Psi(\bn+s_i)=(\bX^{(i)},\bY_H+s_i)$. Since $y\mapsto y+s_i$ is a bijection of $\Z/P_H\Z$, both $\Hh(\bX^{(i)}\mid\bY_H)=\Hh(\bX^{(i)}\mid\bY_H+s_i)$ and $\Hh(\bY_H+s_i)=\Hh(\bY_H)$ hold exactly. By Lemma~\ref{lem:aff}, $\TV(\bn,\bn+s_i)\le E_{\rm aff}$ ($s_i\le KH_+\le Q$), hence by data processing $\TV(\Psi(\bn),\Psi(\bn+s_i))\le E_{\rm aff}$. The marginal entropies therefore cancel when the conditional entropies are written as joint entropy minus marginal entropy. The joint alphabet has size at most $3^{BH}P_H$, with logarithm at most $BH\log3+1.02\vt H\le1.11U$, so one application of the continuity estimate gives
\[
\begin{aligned}
\bigl|\Hh(\bX^{(i)}\mid\bY_H)-\Hh(\bX_H\mid\bY_H)\bigr|
&\le E_{\rm aff}\log(3^{BH}P_H-1)+h_2(E_{\rm aff})\\
&\le1.11U\,E_{\rm aff}+h_2(E_{\rm aff})\le\frac3U\le\frac3k .
\end{aligned}
\]
Relative subadditivity gives $\Hh(\bX_{kH}\mid\bY_H)\le\sum_i\Hh(\bX^{(i)}\mid\bY_H)\le k\Hh(\bX_H\mid\bY_H)+3$, so
\[
\Hh(\bX_{kH})=\Hh(\bX_{kH}\mid\bY_H)+\Hh(\bY_H)-\Hh(\bY_H\mid\bX_{kH})\le k\Hh(\bX_H)-k\,\I(\bX_H;\bY_H)+\log P_H+3 .
\]
Divide by $kH$ and use \eqref{eq:PH}.
\end{proof}

\begin{proposition}[Entropy decrement]\label{prop:decrement}
There is an integer $H\in[H_-,H_+]$ such that $\I(\bX_H;\bY_H)\le\delta H/\log H$.
\end{proposition}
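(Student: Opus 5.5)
The plan is the standard entropy-decrement iteration, run by contradiction with Lemma~\ref{lem:subadd}. Suppose that $\I(\bX_H;\bY_H)>\delta H/\log H$ for every integer $H\in[H_-,H_+]$. Put $h(H):=\Hh(\bX_H)/H$. By \eqref{eq:HX}, $0\le h(H)\le B\log3$ throughout. I will build a chain $H_0:=H_-<H_1<H_2<\cdots$ with $H_{j+1}=k_jH_j$ and show that $h$ drops by a definite amount at each step. Summing these drops will push $h$ below $0$ before the chain leaves $[H_-,H_+]$, which is the contradiction.

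\emph{Choice of the step.} Take $k_j:=\lceil 4.08\,\vt\log H_j/\delta\rceil$. Then $1.02\vt/k_j\le\delta/(4\log H_j)$. By \eqref{eq:Hcond}, $12\log H\le\delta H$, so $3/H_j\le\delta/(4\log H_j)$. Lemma~\ref{lem:subadd}, applied with $H=H_j$ and $k=k_j$ (valid as long as $H_{j+1}\le H_+$), gives
\[
h(H_{j+1})\le h(H_j)-\frac{\delta}{\log H_j}+\frac{\delta}{2\log H_j}=h(H_j)-\frac{\delta}{2\log H_j}.
\]
Telescoping then yields $\sum_{j<J}\delta/(2\log H_j)\le B\log3$ for every $J$ such that $H_J\le H_+$.

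\emph{Growth of the chain.} Write $\ell_j:=\log H_j$. Then $\ell_{j+1}-\ell_j=\log k_j\le\log\log H_j+\log(9\vt/\delta)=\log\ell_j+L$, using $\log H_j\ge100$ to absorb the ceiling and the constant $4.08$. Put $v_j:=\log\ell_j+L$. For $\ell\in[\ell_j,\ell_{j+1}]$ the increment is small relative to $\ell_j$, so $\log\ell+L\le v_j+1$ there. Comparing the sum with an integral,
\[
\frac{1}{\ell_j}\ \ge\ \frac{1}{\ell_{j+1}-\ell_j}\int_{\ell_j}^{\ell_{j+1}}\frac{d\ell}{\ell}
\ \ge\ \frac{\log(v_{j+1})-\log(v_j)}{\ell_{j+1}-\ell_j}\cdot\frac{\ell_{j+1}-\ell_j}{1}\cdot(\text{const}),
\]
which is the discrete form of $\int d\ell/(\ell(\log\ell+L))=\log(\log\ell+L)$. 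Hence
\[
\sum_{j<J}\frac{\delta}{2\ell_j}\ \gtrsim\ \frac{\delta}{2}\log\frac{\log\log H_J+L}{\log\log H_-+L}.
\]
The upper bound $B\log3$ on the left-hand side then forces
\[
\log\log H_J+L\le(\log\log H_-+L)\,e^{2B\log3/\delta}=(\log\log H_-+L)\,e^{S}.
\]
Consequently the chain must reach a value with $\log\log H_J>(\log\log H_-+L)e^S$ while every term so far still satisfies $H_j\le H_+$. This contradicts the displayed bound. The definition \eqref{eq:Hplus} of $H_+$ is designed precisely so that $\log\log H_+$ exceeds this threshold; the additive $H_-$ and the factor $2$ are there to absorb one overshooting step and the rounding.

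\emph{Main obstacle.} The only delicate point is the bookkeeping in the sum-to-integral comparison. The constants have to fit exactly the $e^S$ with $S=2B\log3/\delta$ and the additive $L$ used in \eqref{eq:Hplus}. I would handle this by proving the cleaner per-step inequality $\delta/(2\ell_j)\ge c\,\delta\bigl(\log v_{j+1}-\log v_j\bigr)$ with an explicit $c$ close to $1$. This uses $\ell_{j+1}-\ell_j\le v_j$ and $\log(v_{j+1}/v_j)\le(v_{j+1}-v_j)/v_j\le(\ell_{j+1}-\ell_j)/(\ell_jv_j)$, which combine to give $c=1$ directly. If the constants still come out slightly off, the slack can be absorbed by the generous factor $2$ inside $H_+$, or by taking $k_j$ a bounded multiple larger, which only changes $L$ by $O(1)$.
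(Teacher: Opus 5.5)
Your proposal is correct and follows the paper's proof essentially step for step. It uses the same contradiction hypothesis, the same $k_j=\lceil4.08\,\vt\log H_j/\delta\rceil$ with per-step decrement $\delta/(2\log H_j)$ from Lemma~\ref{lem:subadd}, the same growth bound $\ell_{j+1}\le\ell_j+\log\ell_j+L$, and the same first-crossing contradiction. Your inequality $1/\ell_j\ge\log v_{j+1}-\log v_j$ is exactly the paper's comparison with $\int_{\ell_j}^{\ell_{j+1}}du/(u(\log u+L))=\log v_{j+1}-\log v_j$.

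In a write-up, drop the ``$\gtrsim$'' display and the fallback remarks. Your last paragraph proves the exact inequality $\sum_{j<J}1/\ell_j\ge\log(v_J/v_0)$, which is what matches the $e^S$ in \eqref{eq:Hplus}. Also make the overshoot explicit: for the first index $J$ past the threshold, $\ell_J\le\ell_{J-1}+\log\ell_{J-1}+L\le2\exp\bigl((\log\log H_-+L)e^S\bigr)\le\log H_+$. Hence $H_J\le H_+$, and the telescoped bound applies at $J$.
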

\begin{proof}
Suppose not. Let $H_1:=H_-$, $k_j:=\lceil4.08\,\vt\log H_j/\delta\rceil\ge2$, $H_{j+1}:=k_jH_j$; then $1.02\vt/k_j\le\delta/(4\log H_j)$. While $H_{j+1}\le H_+$, Lemma~\ref{lem:subadd} and the assumption give
\[
\frac{\Hh(\bX_{H_{j+1}})}{H_{j+1}}\le\frac{\Hh(\bX_{H_j})}{H_j}-\frac\delta{\log H_j}+\frac\delta{4\log H_j}+\frac3{H_j}\le\frac{\Hh(\bX_{H_j})}{H_j}-\frac\delta{2\log H_j},
\]
as $3/H_j\le\delta/(4\log H_j)$ by \eqref{eq:Hcond}. Telescoping from \eqref{eq:HX} with $\Hh\ge0$: if $H_J\le H_+$ then
\begin{equation}\label{eq:tele}
\sum_{j<J}\frac{\delta}{2\log H_j}\le B\log3 .
\end{equation}
Put $x_j:=\log H_j$. Then $x_{j+1}\le x_j+\log(8.16\,\vt x_j/\delta)\le x_j+\log x_j+L$ with $L=\log(9\vt/\delta)$. As $u\mapsto1/(u(\log u+L))$ is decreasing on $u\ge1$,
\[
\frac1{x_j}\ge\frac{x_{j+1}-x_j}{x_j(\log x_j+L)}\ge\int_{x_j}^{x_{j+1}}\frac{du}{u(\log u+L)},\qquad\text{so}\qquad\sum_{j<J}\frac1{x_j}\ge\log\frac{\log x_J+L}{\log x_1+L}.
\]
Since $k_j\ge2$, $x_J\to\infty$ and the right side tends to infinity with $J$; let $J$ be the least index with $\sum_{j<J}1/x_j>S=2B\log3/\delta$. Then $\sum_{j<J-1}1/x_j\le S$, whence $\log x_{J-1}\le(\log x_1+L)e^S-L$ and $x_J\le x_{J-1}+\log x_{J-1}+L\le2\exp\bigl((\log x_1+L)e^S\bigr)$, i.e.\ $H_J\le H_+$ by \eqref{eq:Hplus} ($x_1=\log H_-$). But then \eqref{eq:tele} holds while $\sum_{j<J}\delta/(2x_j)>B\log3$, a contradiction.
\end{proof}

\begin{lemma}[Entropy to expectation]\label{lem:ent}
Let $(\bX,\bY)$ be finite-valued random variables, $\bY$ taking values in a finite set $\mathcal Y=\prod_p\mathcal Y_p$, let $U_{\mathcal Y}$ be the uniform law on $\mathcal Y$, $Q:=\mathrm{Law}(\bX)\otimes U_{\mathcal Y}$, $P:=\mathrm{Law}(\bX,\bY)$. Suppose $F(x,y)=\sum_pF_p(x,y_p)$ with $F_p$ real and $|F_p|\le B_p$, and put $V:=\sum_pB_p^2$. Then
\[
\bigl|\E_PF-\E_QF\bigr|\le\sqrt{2V\,\KL(P\|Q)},\qquad \KL(P\|Q)=\I(\bX;\bY)+\bigl(\log|\mathcal Y|-\Hh(\bY)\bigr).
\]
\end{lemma}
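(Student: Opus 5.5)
The plan has two independent parts: an algebraic identity for $\KL(P\|Q)$, and then a transport-type inequality. For the inequality, the natural route is to condition on $\bX$, apply McDiarmid's (bounded-differences) lemma to the independent coordinates of a uniform $\bY$, and use the Donsker--Varadhan variational formula. Throughout, write $P_{\bX}$ for $\mathrm{Law}(\bX)$.

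\emph{Step 1 (the identity).} Write $P(x,y)=P_{\bX}(x)P_{\bY}(y)\cdot\frac{P(x,y)}{P_{\bX}(x)P_{\bY}(y)}$ and $Q(x,y)=P_{\bX}(x)/|\mathcal Y|$. Then
\[
\log\frac{P(x,y)}{Q(x,y)}=\log\frac{P(x,y)}{P_{\bX}(x)P_{\bY}(y)}+\log\bigl(|\mathcal Y|P_{\bY}(y)\bigr).
\]
Taking $\E_P$, the first term gives $\I(\bX;\bY)$ and the second gives $\log|\mathcal Y|-\Hh(\bY)$. This is pure bookkeeping, with the convention $0\log0=0$.

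\emph{Step 2 (sub-Gaussian bound under $Q$).} Fix $x$ and let $\bY'$ be uniform on $\mathcal Y$. Since the uniform law on a product set is the product of the uniform laws, the coordinates $\bY'_p$ are independent. The function $y\mapsto F(x,y)=\sum_pF_p(x,y_p)$ is a sum of independent real summands, and each summand takes values in $[-B_p,B_p]$, an interval of length $2B_p$. Hoeffding's lemma gives, for every $s\in\R$,
\[
\log\E e^{s(F(x,\bY')-\E F(x,\bY'))}\le\frac{s^2}{8}\sum_p(2B_p)^2=\frac{s^2V}{2}.
\]
Here $\E F(x,\bY')$ still depends on $x$. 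To handle this, apply Step 3 to the centred function $G(x,y):=F(x,y)-\E_{\bY'}F(x,\bY')$. Since $\E_QG=0$ and $\E_PG=\E_PF-\E_QF$, where $\E_P$ of the centring term equals $\E_{P_{\bX}}\E_{\bY'}F=\E_QF$, this centring is exact.

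\emph{Step 3 (Donsker--Varadhan).} The variational formula gives, for every $s$,
\[
s\,\E_PG\le\KL(P\|Q)+\log\E_Qe^{sG}.
\]
Conditioning on $x$ and using Step 2, $\log\E_Qe^{sG}\le s^2V/2$. This gives $s\,\E_PG\le\KL+s^2V/2$ for all real $s$, and optimising at $s=\pm\sqrt{2\KL/V}$ yields $|\E_PG|\le\sqrt{2V\KL(P\|Q)}$. The degenerate cases $V=0$ or $\KL=0$ are trivial. If $\KL=\infty$ there is nothing to prove, but here $Q$ charges every point charged by $P$, since $P_{\bX}(x)>0$ whenever $P(x,y)>0$.

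The main subtlety is Step 2. One must use only the product structure of the uniform reference law, not of $P$, so that Hoeffding yields the variance proxy $V=\sum B_p^2$ (additive over $p$) rather than $(\sum B_p)^2$. One must also centre conditionally on $x$ before invoking Donsker--Varadhan. An alternative is Pinsker's inequality, but applied directly it would give $(\sum_pB_p)^2$ in place of $V$, which is the loss the paper's choice avoids.
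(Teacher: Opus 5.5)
Your proposal is correct and follows essentially the paper's proof: the same bookkeeping identity for $\KL(P\|Q)$, Hoeffding's lemma applied to the independent coordinates of the uniform product law after centring at each fixed $x$ (giving the additive variance proxy $V=\sum_pB_p^2$), and the Donsker--Varadhan inequality optimised at $t=\sqrt{2\KL(P\|Q)/V}$ and applied to both signs of the centred function. Your opening mention of McDiarmid's inequality is unnecessary, since only Hoeffding's lemma for sums of independent bounded summands is used, exactly as in the paper.
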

\begin{proof}
The formula for $\KL$: $\KL(P\|Q)=\sum P(x,y)\log\frac{P(x,y)}{P_{\bX}(x)/|\mathcal Y|}=-\Hh(\bX,\bY)+\Hh(\bX)+\log|\mathcal Y|$. Let $Z(x,y):=F(x,y)-\E_{U_{\mathcal Y}}F(x,\cdot)$. For fixed $x$, under $U_{\mathcal Y}$ the summands $F_p(x,y_p)-\E F_p(x,\cdot)$ are independent, mean zero, with range in an interval of length $2B_p$, so Hoeffding's lemma gives $\E_{U_{\mathcal Y}}e^{tZ(x,\cdot)}\le e^{t^2V/2}$ for real $t$; averaging over $x$, $\log\E_Qe^{tZ}\le t^2V/2$. The Donsker--Varadhan inequality $\E_PZ\le\frac1t(\KL(P\|Q)+\log\E_Qe^{tZ})$ ($t>0$) gives $\E_PZ\le\KL/t+tV/2$; if $V=0$ then $F\equiv0$ and there is nothing to prove; if $\KL>0$ take $t=\sqrt{2\KL/V}$; if $\KL=0$ then $P=Q$ and there is nothing to prove. Apply the same to $-Z$. Finally $\E_PZ=\E_PF-\E_QF$ since $\E_P[\E_{U_{\mathcal Y}}F(\bX,\cdot)]=\E_QF$.
\end{proof}

\section{Decoupling, the circle method, and the contradiction}\label{sec:circle}

Fix $H$ as in Proposition~\ref{prop:decrement}. Apply Lemma~\ref{lem:ent} with $\mathcal Y=\Z/P_H\Z=\prod_{p\in\cP_H}\Z/p\Z$, $F$ as in \eqref{eq:F} ($F_p$ depends on $y$ only through $y\bmod p$), $B_p=3K/\vt$. Then $V\le|\cP_H|(3K/\vt)^2\le18K^2H/(\vt\log H)$ by \eqref{eq:PH}, and $\KL\le\delta H/\log H+\rho_H\le2\delta H/\log H$ by Lemma~\ref{lem:unif}. With \eqref{eq:delta},
\[
\bigl|\E F(\bX_H,\bY_H)-\E_QF\bigr|\le\sqrt{\frac{72\,\delta K^2H^2}{\vt\log^2H}}=\frac{\sqrt{72}}{1440}\cdot\frac{\eps KH}{\log H}\le\frac{L_0}{7}
\]
(exactly, $(\sqrt{72}/1440)^2/(1/168)^2=49/50$). Since $\E_{U_{\mathcal Y}}F_p(x,\cdot)=\frac1p\sum_{j\in J_p}x_{1,a_1j+pb_1}x_{2,a_2j+pb_2}$, Proposition~\ref{prop:main} gives
\begin{equation}\label{eq:3.17}
|\E\,T(\bX_H)|\ge\frac67L_0,\qquad T(x):=\sum_{p\in\cP_H}\frac1p\sum_{j\in J_p}x_{1,a_1j+pb_1}\,x_{2,a_2j+pb_2}.
\end{equation}

\begin{lemma}[Circle method on zero-padded blocks]\label{lem:circle}
Let $M:=2KH$, $D_i:=a_iM$, $N:=aM$, $S_H(\alpha):=\sum_{p\in\cP_H}e(\alpha p)/p$, $\tau:=\eps/(480\sqrt a)$. Extend each block by $x_{i,r}:=0$ for $a_iKH<r\le D_i$ and periodically modulo $D_i$, and put $G_i(\xi):=\frac1{D_i}\sum_{r=1}^{a_iKH}x_{i,r}e(-r\xi/D_i)$ for $\xi\in\Z/D_i\Z$. Let
\[
\Xi:=\Bigl\{\xi\in\Z/D_1\Z:\ \Bigl|S_H\Bigl(-\frac{\Delta\xi}N+\frac{b_2\eta}{a_2}\Bigr)\Bigr|\ge\frac\tau{\log H}\text{ for some }\eta\in\Z/a_2\Z\Bigr\}.
\]
Here and in \eqref{eq:fourier} choose the representatives $0\le\xi<D_1$ and $0\le\eta<a_2$. Replacing $\xi$ by $\xi+D_1$ and reindexing $\eta$ by $\eta+a_1$ leaves $-\xi+M\eta$ unchanged modulo $D_2$ and changes the phase by an integer. Thus both the definition of $\Xi$ and the inner sum in \eqref{eq:fourier} are independent of the choice of representative. Then for every $x$,
\[
|T(x)|\le\frac{\eps KH}{480\log H}+\frac{4a_2KH}{\log H}\sum_{\xi\in\Xi}|G_1(\xi)| .
\]
\end{lemma}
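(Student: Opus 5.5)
The plan is to expand both zero-padded blocks in Fourier series, replace the truncated $j$-sum over $J_p$ by a full period modulo $M$, and then split the frequencies $\xi$ according to whether $\xi\in\Xi$.

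\emph{Fourier expansion.} With the zero padding and $D_i$-periodic extension, Fourier inversion on $\Z/D_i\Z$ gives
\[
x_{i,r}=\sum_{\xi\in\Z/D_i\Z}G_i(\xi)\,e(r\xi/D_i)\qquad\text{for all }r .
\]

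\emph{Completing the $j$-sum.} I would show that for each $p\in\cP_H$
\[
\sum_{j\in J_p}x_{1,a_1j+pb_1}\,x_{2,a_2j+pb_2}=\sum_{j\bmod M}x_{1,a_1j+pb_1}\,x_{2,a_2j+pb_2},
\]
where on the right the indices are read modulo $D_i$. The argument runs as follows.
\begin{itemize}
\item Take a window of $M=2KH$ consecutive integers $j$ containing $J_p$. It should start just below $\max(t_{1,p},t_{2,p})+1$, and its length can accommodate $J_p$ because $|t_{1,p}-t_{2,p}|\le KH/200+1$.
\item As $j$ runs over this window, $a_ij+pb_i$ runs over an arithmetic progression of length $D_i=a_iM$. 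Each residue modulo $D_i$ therefore occurs at most once, so no wrap-around double-counts a genuine entry.
\item For $j$ in the window but outside $J_p$, at least one index $a_ij+pb_i$ falls outside $[1,a_iKH]$ modulo $D_i$. It lands in the padded zero range $(a_iKH,D_i]$, because the shifts satisfy $|pb_i/a_i|\le KH/400$, which is much smaller than the padding length $KH$.
\end{itemize}
This is the step I expect to need the most care. The difficulty is the bookkeeping of the two shifted windows simultaneously. I would handle it by writing $j=t_{i,p}+s$ and checking the ranges of $s$ directly.

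\emph{Orthogonality.} Insert the Fourier expansions. Since $a_i/D_i=1/M$, the inner sum becomes $\sum_{j\bmod M}e\bigl(j(\xi_1+\xi_2)/M\bigr)=M\,\one_{\xi_2\equiv-\xi_1\,(M)}$. Hence $\xi_2=-\xi_1+M\eta$ with $\eta\in\Z/a_2\Z$. Substituting, the $p$-phase is
\[
p\Bigl(\frac{b_1\xi_1}{a_1M}+\frac{b_2(-\xi_1+M\eta)}{a_2M}\Bigr)=p\Bigl(-\frac{\Delta\xi_1}{N}+\frac{b_2\eta}{a_2}\Bigr).
\]
This gives the identity
\[
T(x)=M\sum_{\xi}G_1(\xi)\sum_{\eta}G_2(-\xi+M\eta)\,S_H\Bigl(-\frac{\Delta\xi}N+\frac{b_2\eta}{a_2}\Bigr).
\]
The representative-independence noted in the statement guarantees this is well defined.

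\emph{Frequencies $\xi\notin\Xi$.} Here $|S_H|<\tau/\log H$. Parseval gives $\sum_\xi|G_i(\xi)|^2\le a_iKH/D_i=1/2$. Each $\xi_2\in\Z/D_2\Z$ arises from exactly $a_1$ pairs $(\xi,\eta)$. By Cauchy--Schwarz,
\[
\sum_{\xi,\eta}|G_1(\xi)|\,|G_2(-\xi+M\eta)|\le\Bigl(a_2\sum|G_1|^2\Bigr)^{1/2}\Bigl(a_1\sum|G_2|^2\Bigr)^{1/2}\le\frac{\sqrt a}2 .
\]
So this part contributes at most
\[
M\cdot\frac{\tau}{\log H}\cdot\frac{\sqrt a}{2}=\frac{\eps KH}{480\log H}.
\]

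\emph{Frequencies $\xi\in\Xi$.} Use the trivial bounds $|S_H|\le\sum_{p\in\cP_H}1/p\le4/\log H$ from \eqref{eq:PH} and $|G_2|\le a_2KH/D_2=1/2$. Summing over the $a_2$ values of $\eta$ gives a contribution of at most
\[
M\cdot\frac{4}{\log H}\cdot\frac{a_2}{2}\sum_{\xi\in\Xi}|G_1(\xi)|=\frac{4a_2KH}{\log H}\sum_{\xi\in\Xi}|G_1(\xi)| .
\]
Adding the two contributions gives the bound in the lemma.
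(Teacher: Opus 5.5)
Your proposal is correct and follows the paper's proof essentially step for step: exact completion of the $j$-sum via the zero padding, Fourier expansion and orthogonality giving \eqref{eq:fourier}, then Plancherel with Cauchy--Schwarz off $\Xi$, and the trivial bounds $|G_2|\le\frac12$, $|S_H|\le4/\log H$ on $\Xi$. The only difference is the completion window, and it is harmless. The paper uses the centred system $\mathcal R=\{-\lfloor KH/2\rfloor,\dots,M-\lfloor KH/2\rfloor-1\}$, so that each index $a_ij+pb_i$ stays in $[-0.51a_iKH,1.51a_iKH]$ and neither block ever wraps into its data range. With your window starting near $\max(t_{1,p},t_{2,p})+1$, the block with the smaller $t_{i,p}$ does wrap into $[1,a_iKH]$ once $j>\min(t_{1,p},t_{2,p})+2KH$; for those $j$ you must use that the other block's index lies in $(a_{i'}KH,D_{i'}]$, which your ``at least one index'' formulation and the planned check of the $s$-ranges both accommodate.
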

\begin{proof}
\emph{Exactness of padding.} Let $\mathcal R:=\{-\lfloor KH/2\rfloor,\dots,M-\lfloor KH/2\rfloor-1\}$, a complete residue system modulo $M$ containing $J_p$ (as $J_p\subset[-KH/400,\,KH+KH/400]$). For $j\in\mathcal R$ the index $a_ij+pb_i$ lies in $[-0.51a_iKH,\,1.51a_iKH]$ (using $|pb_i|\le a_iKH/400$); reduced modulo $D_i=2a_iKH$, a negative index lands in $(1.49a_iKH,D_i]$ and an index $>a_iKH$ stays in $(a_iKH,1.51a_iKH]$, both in the zero-padded half. The index zero also has value zero, since $x_{i,0}=x_{i,D_i}=0$. Hence $x_{1,a_1j+pb_1}x_{2,a_2j+pb_2}$ (indices read modulo $D_i$) vanishes for $j\in\mathcal R\setminus J_p$, and
$T(x)=\sum_p\frac1p\sum_{j\in\mathcal R}x_{1,a_1j+pb_1}x_{2,a_2j+pb_2}$ exactly.

\emph{Fourier expansion.} With $x_{i,r}=\sum_{\xi_i\in\Z/D_i\Z}G_i(\xi_i)e(r\xi_i/D_i)$ and $r=a_ij+pb_i$, $e(r\xi_i/D_i)=e(j\xi_i/M)e(pb_i\xi_i/(a_iM))$. Summing over $j\in\mathcal R$: $\sum_{j\in\mathcal R}e(j(\xi_1+\xi_2)/M)=M\one_{\xi_1+\xi_2\equiv0\,(M)}$. Write $\xi_2=-\xi_1+M\eta$, $\eta\in\Z/a_2\Z$; the phase becomes $p\bigl(\frac{b_1\xi_1}{a_1M}-\frac{b_2\xi_1}{a_2M}+\frac{b_2\eta}{a_2}\bigr)=p\bigl(-\frac{\Delta\xi_1}{N}+\frac{b_2\eta}{a_2}\bigr)$. Hence, exactly,
\begin{equation}\label{eq:fourier}
T(x)=M\sum_{\xi\in\Z/D_1\Z}\ \sum_{\eta\in\Z/a_2\Z}G_1(\xi)\,G_2(-\xi+M\eta)\,S_H\Bigl(-\frac{\Delta\xi}N+\frac{b_2\eta}{a_2}\Bigr).
\end{equation}
\emph{Bounds.} Plancherel gives $\sum_{\xi}|G_i(\xi)|^2=\frac1{D_i}\sum_{r\le a_iKH}|x_{i,r}|^2\le\frac12$ and $|G_i|\le\frac12$. As $(\xi,\eta)$ ranges over $\Z/D_1\Z\times\Z/a_2\Z$, each value of $-\xi+M\eta\in\Z/D_2\Z$ is taken $a_1$ times, so by Cauchy--Schwarz $\sum_{\xi,\eta}|G_1(\xi)||G_2(-\xi+M\eta)|\le\sqrt{a_2/2}\sqrt{a_1/2}=\sqrt a/2$. For $\xi\notin\Xi$ use $|S_H|<\tau/\log H$: the total is $\le M\cdot\frac{\sqrt a}2\cdot\frac\tau{\log H}=\frac{\eps KH}{480\log H}$, as $\tau\sqrt a=\eps/480$. For $\xi\in\Xi$ use $|G_2|\le\frac12$ and $|S_H|\le\sum_{p\in\cP_H}1/p\le4/\log H$: total $\le M\cdot a_2\cdot\frac12\cdot\frac4{\log H}\sum_{\xi\in\Xi}|G_1(\xi)|=\frac{4a_2KH}{\log H}\sum_{\xi\in\Xi}|G_1(\xi)|$.
\end{proof}

Once $H$ is chosen, $\Xi$ depends only on $H$, the affine forms, the core tolerance $\eps$, and the fixed prime band; it does not depend on the realization of $\bX_H$. We apply Lemma~\ref{lem:mrt} separately at each fixed frequency $-\xi/D_1$ and sum the resulting expectation bounds. Apply Lemma~\ref{lem:circle} with $x=\bX_H$ and take expectations in \eqref{eq:3.17}. Since $\eps KH/(480\log H)=L_0/20$,
\begin{equation}\label{eq:Xi}
\sum_{\xi\in\Xi}\E|G_1(\xi)|\ge\Bigl(\frac67-\frac1{20}\Bigr)\frac{L_0\log H}{4a_2KH}\ge\frac{4}{5}\cdot\frac{\eps}{96a_2}\ge\frac{\eps}{120a_2}.
\end{equation}
(If $\Xi=\emptyset$ this already contradicts \eqref{eq:3.17}.) On the other hand $G_1(\xi)=\frac12\cdot\frac1{a_1KH}\sum_{r\le a_1KH}g_1(a_1\bn+r)e(-r\xi/D_1)$, so by Lemma~\ref{lem:mrt} (with $J=a_1KH\in[H_-,U]$, $c=a_1$, $d=0$), $\E|G_1(\xi)|\le a_1u_-$. Therefore
\begin{equation}\label{eq:mu}
u_-\ge\frac{\eps}{120\,a\,|\Xi|}.
\end{equation}

\begin{lemma}[Restriction]\label{lem:restr}
Let $d_H:=\gcd(N,\Delta,a_1b_2M)$. Then $|\Xi|\le C_5\,d_H\,a^4K^2\vt^{-2}\eps^{-4}\le C_5\,|\Delta|\,a^4K^2\vt^{-2}\eps^{-4}$.
\end{lemma}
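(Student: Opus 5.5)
The plan is to reduce the count of $\Xi$ to a large-spectrum count for a prime-supported exponential sum, and to bound that count with the $\ell^4$ restriction estimate of Green--Tao \cite[Prop.~4.2]{GT06}.

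\emph{Step 1: from $\xi$ to frequencies.} For each $\xi\in\Xi$ pick some $\eta\in\Z/a_2\Z$ witnessing membership, and set $\theta(\xi,\eta):=-\Delta\xi/N+b_2\eta/a_2$. Writing $b_2\eta/a_2=a_1b_2M\eta/N$, the number $\theta$ modulo $1$ lies in $N^{-1}\Z/\Z$ and equals $(-\Delta\xi+a_1b_2M\eta)/N$. The map $(\xi,\eta)\mapsto N\theta \bmod N$ is a homomorphism $\Z/D_1\Z\times\Z/a_2\Z\to\Z/N\Z$. I expect its fibres to have size at most $d_H$, or $d_H$ times a factor already absorbed into the powers of $a$. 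Hence
\[
|\Xi|\le d_H\cdot\#\Theta,\qquad \Theta:=\bigl\{\theta\in N^{-1}\Z/\Z:\ |S_H(\theta)|\ge\tau/\log H\bigr\}.
\]
The second inequality in the lemma follows from $d_H\mid\Delta$, so $d_H\le|\Delta|$.

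\emph{Step 2: normalise into the Green--Tao setting.} Put $f(n):=\frac{\vt H}{2}\cdot\frac{\log H}{n}\,\one_{n\in\cP_H}$. Then $0\le f\le\log H$, and $f\le C\cdot\log p\cdot\one_{p\text{ prime}}$ since $\log p\asymp\log H$ on the band. Hence $|f|$ is dominated by a constant multiple of the primes weight of \cite[Prop.~3.1(i)]{GT06} (with $k=1$, $F(n)=n$), regarded on $\Z/N\Z$; this needs $N=2aKH\ge N_{\rm GT}$, which holds by \eqref{eq:Hcond}. The supports of $f$ lie in $[1,\vt H]$ and $\vt H<N/2$, so there is no wraparound, and the sum $\widehat f(\theta)=\sum_n f(n)e(n\theta)$ equals $\frac{\vt H\log H}{2}S_H(\theta)$. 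Each $\theta\in\Theta$ therefore gives $|\widehat f(\theta)|\ge\frac{\tau\vt H}{2}$.

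\emph{Step 3: restriction and counting.} The restriction estimate with exponent $4$, applied on $\Z/N\Z$ with the majorant normalised by its total mass $\asymp N$, should give
\[
\sum_{\theta\in N^{-1}\Z/\Z}|\widehat f(\theta)|^4\le C_{\rm GT}^4c_{\rm GT}^{-2}\cdot C\, N^3 .
\]
The factor $c_{\rm GT}^{-2}$ comes from the lower bound in Prop.~3.1(i) when passing between $f$ and the majorant. Combining this with the lower bound from Step 2 gives
\[
\#\Theta\le C\,C_{\rm GT}^4c_{\rm GT}^{-2}\,\frac{N^3}{(\tau\vt H)^4}= C\,C_{\rm GT}^4c_{\rm GT}^{-2}\,\frac{(2aK)^3\,480^4a^2}{\eps^4\vt^4H}.
\]
This raw estimate has the wrong shape: it carries a spurious factor $N^3/H^4$. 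I must instead account for the fact that $f$ lives on an interval of length $\vt H$ sitting inside a cyclic group of length $N$. The fix is to group the frequencies of $N^{-1}\Z$ into arcs of length $1/(\vt H)$, on which $\widehat f$ is essentially constant, or equivalently to apply the restriction estimate at modulus $\asymp\vt H$ and count lifts. Each arc then contributes at most $N/(\vt H)\asymp aK/\vt$ points. Two such factors arise, one from the arc multiplicity and one from the normalisation of the majorant to an interval of length $\vt H$ rather than $N$. Together with $\tau^{-4}\propto a^2\eps^{-4}$ this should give exactly $C_5\,d_Ha^4K^2\vt^{-2}\eps^{-4}$, with $102400$ collecting the numerical factors $2^{\cdot}$ from $f$ and the grouping.

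\emph{Main obstacle.} The delicate step is this bookkeeping between the cyclic modulus $N$ and the prime range $\vt H$. It requires choosing the Green--Tao parameter $N$ and $R=\lfloor N^{1/10}\rfloor$ consistently; this is why \eqref{eq:Hcond} requires $\vt H/2>(2aKH)^{1/10}$, so that all primes in $\cP_H$ exceed $R$ and the majorant dominates $f$. It also requires checking that a large-spectrum point of $N^{-1}\Z$ forces a genuinely large restriction sum, rather than an artefact of the spacing. I would first try the arc-grouping argument, using the bound $|\widehat f(\theta)-\widehat f(\theta')|\le 2\pi\vt H|\theta-\theta'|\sum f$ to pass to a $1/(8\vt H)$-net. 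This costs only constant factors, at the price of replacing $\tau$ by $\tau/2$.
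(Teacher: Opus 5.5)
There is a genuine gap. The main problem is in Step~3, and there is a secondary one in Step~1.

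\emph{Step 3.} The estimate $\sum_\theta|\widehat f(\theta)|^4\le C\,C_{\rm GT}^4c_{\rm GT}^{-2}N^3$ that you expect is false. The term $\theta=0$ alone is $(\sum f)^4\ge(\vt H/8)^4$, while $N^3=(2aKH)^3$ is only cubic in $H$. Your resulting count $\#\Theta\ll H^{-1}$ makes this visible: for large $H$ it would be $<1$. Yet $0\in\Theta$ always, since $S_H(0)\ge1/(4\log H)>\tau/\log H$.

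So the ``wrong shape'' is not a bookkeeping artefact between the moduli $N$ and $\vt H$. It comes from normalising by the total mass of the majorant, instead of by the $L^2(\beta_R)$-norm of the actual coefficients, which already sees the sparse support $\cP_H$. Write $f=a\beta_R$ with $a_p=f(p)/\beta_R(p)$, and use $f(p)<\log H$, $\beta_R(p)\ge(c_{\rm GT}/20)\log H$ and $|\cP_H|\le2\vt H/\log H$. The restriction estimate then gives $\sum_\theta|\widehat f(\theta)|^4\le1600\,C_{\rm GT}^4c_{\rm GT}^{-2}N^2(\vt H)^2$. Your own lower bound $|\widehat f(\theta)|\ge\tau\vt H/2$ now yields directly $\#\Theta\le102400\,C_{\rm GT}^4c_{\rm GT}^{-2}a^2K^2\vt^{-2}\tau^{-4}=C_5a^4K^2\vt^{-2}\eps^{-4}$. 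This is the paper's computation: it takes $a_n:=1/(n\beta_R(n))$ on $\cP_H$ and applies Markov to $\sum_{k\in\Z/N\Z}|S_H(k/N)|^4\le C_{\rm GT}^4N^2(\sum_{p\in\cP_H}p^{-2}\beta_R(p)^{-1})^2$.

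No arc grouping is needed, and yours would not work as stated. At spacing $1/(8\vt H)$ your Lipschitz bound is $\frac\pi4\sum f\asymp\vt H$, far above the threshold $\tau\vt H/2$. A net that only halves $\tau$ needs spacing $\asymp\tau/(\vt H)$, which brings in extra powers of $\tau^{-1}$ and cannot reproduce the stated bound.

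\emph{Step 1.} The fibre count is asserted, not proved, and the map is mis-specified. The map $(\xi,\eta)\mapsto-\Delta\xi+a_1b_2M\eta \bmod N$ is not well defined on $\Z/D_1\Z\times\Z/a_2\Z$. Replacing $\xi$ by $\xi+D_1$ changes it by $-\Delta a_1M$, which is a multiple of $N=a_1a_2M$ only when $a_2\mid\Delta$; this fails, for instance, for $(n,2n+1)$. If you extend it to a genuine homomorphism on $\Z/N\Z\times\Z/a_2\Z$, the kernel has size $a_2d_H$. That factor $a_2$ is \emph{not} absorbed, because the Markov count already uses all of $a^4$.

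The missing idea is to parametrise by the subgroup $\mathcal E:=\{(\xi_1,\xi_2)\in\Z/D_1\Z\times\Z/D_2\Z:\xi_1+\xi_2\equiv0\ (M)\}$, which has order $N$. On it, $\Phi(\xi_1,\xi_2):=a_2b_1\xi_1+a_1b_2\xi_2\bmod N$ is a well-defined homomorphism, equal to $-\Delta\xi+a_1b_2M\eta$ at $(\xi,-\xi+M\eta)$. Since $\mathcal E$ is generated by $(1,-1)$ and $(0,M)$, with images $-\Delta$ and $a_1b_2M$, the image has order $N/d_H$. Hence every fibre has exactly $d_H$ elements, and $|\Xi|\le d_H|\cA_H|$.
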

\begin{proof}
Apply \cite[Proposition 4.2]{GT06} with $k=1$, $F(n)=n$ (singular series $1$), $N=2aKH\ge N_{\rm GT}$, $R:=\lfloor N^{1/10}\rfloor$, exponent $4$, and $a_n:=1/(n\beta_R(n))$ for $n\in\cP_H$, $a_n:=0$ otherwise. Every $p\in\cP_H$ exceeds $R$ by \eqref{eq:Hcond}, so $p$ lies in the sifted set of \cite[Prop.\ 3.1(i)]{GT06} and $\beta_R(p)\ge c_{\rm GT}\log R\ge(c_{\rm GT}/20)\log H$ (as $R\ge N^{1/10}/2\ge H^{1/20}$). The proposition gives
\[
\sum_{k\in\Z/N\Z}\Bigl|S_H\Bigl(\frac kN\Bigr)\Bigr|^4\le C_{\rm GT}^4N^2\Bigl(\sum_{p\in\cP_H}\frac1{p^2\beta_R(p)}\Bigr)^2\le4C_{\rm GT}^4a^2K^2H^2\Bigl(\frac{160}{c_{\rm GT}\vt H\log^2H}\Bigr)^2,
\]
using $\frac{20}{c_{\rm GT}\log H}\cdot\frac{2\vt H}{\log H}\cdot\frac4{\vt^2H^2}=\frac{160}{c_{\rm GT}\vt H\log^2H}$; the right side equals $\frac{102400\,C_{\rm GT}^4a^2K^2}{c_{\rm GT}^2\vt^2\log^4H}$. By Markov, the set $\cA_H$ of $k\in\Z/N\Z$ with $|S_H(k/N)|\ge\tau/\log H$ has at most $102400C_{\rm GT}^4c_{\rm GT}^{-2}a^2K^2\vt^{-2}(480\sqrt a/\eps)^4=C_5a^4K^2\vt^{-2}\eps^{-4}$ elements.

Let $\mathcal E:=\{(\xi_1,\xi_2)\in\Z/D_1\Z\times\Z/D_2\Z:\ \xi_1+\xi_2\equiv0\ (M)\}$, a subgroup of order $N$ (each $\xi_1$ has the $a_2$ partners $\xi_2=-\xi_1+M\eta$), and let $\Phi(\xi_1,\xi_2):=a_2b_1\xi_1+a_1b_2\xi_2\bmod N$; this is a well-defined homomorphism $\mathcal E\to\Z/N\Z$, since $a_2b_1D_1=b_1N$ and $a_1b_2D_2=b_2N$. For $\xi_2=-\xi+M\eta$ one has $\Phi(\xi,\xi_2)=-\Delta\xi+a_1b_2M\eta$, and the phase in the definition of $\Xi$ is exactly $\Phi(\xi,\xi_2)/N$ (as $b_2\eta/a_2=a_1b_2M\eta/N$); so $\Xi$ is the projection to the first coordinate of $\Phi^{-1}(\cA_H)$. The group $\mathcal E$ is generated by $(1,-1)$ and $(0,M)$ (given $(\xi_1,\xi_2)\in\mathcal E$, $(\xi_1,\xi_2)-\xi_1(1,-1)=(0,\xi_1+\xi_2)$ is a multiple of $(0,M)$), whose images are $-\Delta$ and $a_1b_2M$; hence $\Phi(\mathcal E)$ is the subgroup of $\Z/N\Z$ generated by $\gcd(\Delta,a_1b_2M)$, of order $N/d_H$, and every fibre of $\Phi$ has exactly $d_H$ elements. Therefore $|\Xi|\le|\Phi^{-1}(\cA_H)|\le d_H|\cA_H|$; and $d_H\le|\Delta|$ since $d_H\mid\Delta\ne0$.
\end{proof}

Explicitly, $\gcd(N,a_1b_2M)=a_1M\gcd(a_2,b_2)$, so $d_H=\gcd(\Delta,\,a_1M\gcd(a_2,b_2))$; thus $d_H=1$ whenever $\gcd(\Delta,M)=\gcd(\Delta,a_1\gcd(a_2,b_2))=1$ (for instance when $|\Delta|=1$), while $d_H\ge2$ whenever $\Delta$ is even, as $M=2KH$. We carry $|\Delta|$ rather than $d_H$ into $B_0$ because $B_0$ is fixed before $H$ is selected in Proposition~\ref{prop:decrement}, and $d_H=|\Delta|$ for every $H\in[H_-,H_+]$ divisible by $|\Delta|$.

\begin{proof}[Proof of Theorem~\ref{thm:A}]
By \eqref{eq:mu}, Lemma~\ref{lem:restr} and the definition of $u_-$,
\[
3C_*\frac{\log\log H_-}{\log H_-}\ge\frac{\eps^5\vt^2}{120\,C_5\,a^5|\Delta|K^2},\qquad\text{i.e.}\qquad\frac{\log H_-}{\log\log H_-}\le\frac{360\,C_*C_5\,a^5|\Delta|K^2}{\vt^{2}\eps^{5}}\le B_0,
\]
contradicting the strict inequality \eqref{eq:Hminusprop}. Hence \eqref{eq:contra} is impossible: \eqref{eq:main} holds under \eqref{eq:standing} for $\omega\ge A_0$, and by \S\ref{sec:red} for all $\omega\ge A_0(\eps/2)^2=A$, with $\logk4A\le C_1B\eps^{-2}$ by \eqref{eq:Asize}. The rate form was derived after \eqref{eq:Asize}.
\end{proof}

\newpage
\section{Sign pairs and the centred theorem}\label{sec:centred}

\subsection{Sign pairs}
\begin{corollary}\label{cor:signs}
Under the hypotheses of Theorem~\ref{thm:A}, for each $(s,t)\in\{-1,1\}^2$,
\begin{equation}\label{eq:signs}
\Bigl|\sum_{x/\omega<n\le x}\frac{\one_{\lambda(L_1(n))=s,\ \lambda(L_2(n))=t}-\frac14}{n}\Bigr|\le\eps\log\omega,
\end{equation}
and the rate form \eqref{eq:rate} holds for the same sums. Thus each of the four sign pairs has logarithmic frequency $\frac14+O((\fh/\logk4\omega)^{1/2})$ in every window $(x/\omega,x]$ with $\logk4\omega\ge C\fh$, uniformly over admissible pairs of height at most $\fh$.
\end{corollary}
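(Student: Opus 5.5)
Since $\lambda(m)\in\{-1,1\}$ for $m\ge1$, write the indicator as a product of two projections:
\[
\one_{\lambda(L_1(n))=s,\ \lambda(L_2(n))=t}=\frac14\bigl(1+s\lambda(L_1(n))\bigr)\bigl(1+t\lambda(L_2(n))\bigr).
\]
This is valid whenever $L_1(n),L_2(n)\ge1$. Expanding, the summand minus $\frac14$ equals
\[
\frac14\bigl(s\lambda(L_1(n))+t\lambda(L_2(n))+st\,\lambda(L_1(n))\lambda(L_2(n))\bigr).
\]
The correlation term is controlled directly by Theorem~\ref{thm:A} with $g_1=g_2=\lambda$. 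The two one-point terms need a logarithmic one-point bound in every window, which I will obtain from Lemma~\ref{lem:mrt}.

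First I will apply the window reduction exactly as in \S\ref{sec:red}. Lemma~\ref{lem:window} is stated for arbitrary $|f|\le1$, so it applies to the full summand $f(n)=\one_{\cdots}-\frac14$. This reduces to windows satisfying \eqref{eq:standing}, where $L_i(n)\ge1$ on the whole window, and it costs $\frac\eps5\log\omega$.

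In the reduced window, the correlation term is at most $\frac14\cdot\eta\log\omega'$ by the core estimate. For a one-point term, $\sum_{x/\omega'<n\le x}\lambda(a_in+b_i)/n=W\cdot\E\lambda(a_i\bn+b_i)$. By the third bound of \eqref{eq:2.8} with $c=a_i\le B$ and $d=b_i$ ($|b_i|\le B$ by Lemma~\ref{lem:B}), this is at most $2a_iu_-W$. Here I must check that Lemma~\ref{lem:mrt} needs only the standing set-up (the law of $\bn$, \eqref{eq:standing} and $\omega\ge A_0$), not the contradiction hypothesis \eqref{eq:contra}; its proof indeed uses only those. Then $u_-=3C_*\log\log H_-/\log H_-<3C_*/B_0$ by \eqref{eq:Hminusprop}. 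Since $B_0\ge360C_*C_5a^5|\Delta|K^2\vt^{-2}\eps^{-5}$ with $C_5$ huge, we get $2a_iu_-\le\eps/100$ with ample room.

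Summing the three terms, the reduced sum is at most $\bigl(\frac14+\frac{2}{4}\cdot\frac1{100}\bigr)\eta\log\omega'$, which is comfortably at most $\eta\log\omega'$. The same bookkeeping as in \S\ref{sec:red} then gives \eqref{eq:signs} at the same threshold $A=A_0(\eps/2)^2$. The rate form follows by the identical choice $\eps=(15C_1\fh/\logk4\omega)^{1/2}$. The frequency statement is just \eqref{eq:signs} divided by $\log\omega\sim W$.

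The main obstacle is only bookkeeping. The one-point estimate must hold with the same threshold and without assuming \eqref{eq:contra}. I also need $L_i(n)\ge1$ in the retained window (otherwise $\lambda=0$ breaks the product identity), and this is guaranteed because indices with $L_i(n)\le0$ satisfy $n\le\fh\le T$ and were removed. If the constant margin in the one-point bound were tight, I would instead run the core at tolerance $\eps/4$, at the cost of a change in $C_1$ only.
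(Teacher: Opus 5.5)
Your proposal is correct and follows the paper's proof essentially step for step. You reduce the window with Lemma~\ref{lem:window} applied to $f=\one_{\cdots}-\frac14$, and on the retained window, where $L_i(n)\ge1$, you expand $f$ as $\frac14\bigl(s\lambda(L_1)+t\lambda(L_2)+st\,\lambda(L_1)\lambda(L_2)\bigr)$. You control the correlation by the core estimate and the one-point terms by the third bound of Lemma~\ref{lem:mrt} with $c=a_i$, $d=b_i$, together with $u_-<3C_*/B_0$. The paper normalises by $W$ and bounds $|\E f(\bn)|\le0.76\eps$ using the cruder $2a_iu_-\le\eps$, whereas you bound the three sums separately with the sharper (and also valid) $2a_iu_-\le\eps/100$; this difference is cosmetic.
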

\begin{proof}
Put $f(n):=\one_{\lambda(L_1(n))=s,\lambda(L_2(n))=t}-\frac14$, so $|f|\le1$. By Lemma~\ref{lem:window}, exactly as in \S\ref{sec:red}, it suffices to prove $|\sum_{x/\omega<n\le x}f(n)/n|\le\eps\log\omega$ with the halved $\eps$, under \eqref{eq:standing} and for $\omega\ge A_0$, where every $L_i(n)\ge1$. There $f(n)=\frac14\bigl(s\lambda(L_1(n))+t\lambda(L_2(n))+st\,\lambda(L_1(n))\lambda(L_2(n))\bigr)$. Let $\bn$ be as in \S\ref{sec:red}. The proof of Theorem~\ref{thm:A} shows that \eqref{eq:contra} is impossible, so $|\E\,\lambda(L_1(\bn))\lambda(L_2(\bn))|\le\eps\log\omega/W\le1.01\eps$; and the third bound of Lemma~\ref{lem:mrt} with $c=a_i$, $d=b_i$ gives $|\E\,\lambda(L_i(\bn))|\le2a_iu_-\le\eps$, because $u_-<3C_*/B_0$ by \eqref{eq:Hminusprop} and $6a_iC_*\le\eps B_0$ by \eqref{eq:Hminus}. Hence $|\E f(\bn)|\le\frac14(2\eps+1.01\eps)\le0.76\eps$, and $|\sum_{x/\omega<n\le x}f(n)/n|=W|\E f(\bn)|\le0.76\eps(\log\omega+1)\le\eps\log\omega$.
\end{proof}

\subsection{Translation}
\begin{lemma}[Translation]\label{lem:trans}
Let $0<\eps\le1/10$, $A\ge1$, $m\in\Z$, and $f:\Z\to\mathbb C$ with $|f|\le1$. Suppose that
\[
\Bigl|\sum_{X/\Omega<n\le X}\frac{f(n)}n\Bigr|
\le\frac\eps2\log\Omega
\qquad\text{for all real }X\ge\Omega\ge A.
\]
Then for all
\[
x\ge\omega\ge
\max\bigl\{A^2,\ 8[4(|m|+1)]^{2/\eps-1}\bigr\},
\]
one has
\[
\Bigl|\sum_{x/\omega<n\le x}\frac{f(n+m)}n\Bigr|
\le\eps\log\omega.
\]
\end{lemma}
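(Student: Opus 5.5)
The plan is to reduce the translated sum to an untranslated window sum, to which the hypothesis applies, and to pay for the reduction in two ways. Near the left endpoint I will discard terms trivially. Away from it I will compare $1/n$ with $1/(n+m)$. Put $Y:=4(|m|+1)$. The threshold on $\omega$ gives $\log\omega\ge\log 8+(2/\eps-1)\log Y$, that is,
\[
\Bigl(1-\frac\eps2\Bigr)\log Y+\frac\eps2\log 8\le\frac\eps2\log\omega ,
\]
and this is the inequality that will absorb every loss. The case split is on whether the left endpoint $x/\omega$ is at least $Y$.

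\emph{Case $x/\omega\ge Y$.} Substitute $k=n+m$, so that
\[
\sum_{x/\omega<n\le x}\frac{f(n+m)}n=\sum_{x/\omega+m<k\le x+m}\frac{f(k)}{k-m}.
\]
I then make two replacements. First, replace $1/(k-m)$ by $1/k$; the error is at most $\sum_{k>Y/2}|m|/(k(k-|m|))\le 4|m|/Y\le1$. Second, replace the range by $(X/\Omega,X]$ with $X:=x+m$ and $\Omega:=(x+m)/(x/\omega+m)$. This is an exact identity of ranges, and since $|m|\le x/(4\omega)$ we have $\Omega\in[\omega/2,2\omega]$ up to harmless constants. (Alternatively, keep $\Omega=\omega$ and move the left endpoint by $|m|$ terms of size $\le 2/Y$, costing $O(1)$.) The hypothesis then applies: $X\ge\Omega\ge\sqrt\omega\ge A$. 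It gives $\frac\eps2\log\Omega\le\frac\eps2\log\omega+\frac\eps2\log2$. Adding the $O(1)$ errors, the total is at most $\frac\eps2\log\omega+2$, and this is $\le\eps\log\omega$ because $\frac\eps2\log\omega\ge(1-\frac\eps2)\log Y+\dots\ge 2$ for $Y\ge4$ and $\eps\le1/10$. (I expect this constant to need slight care; if needed I will use the factor $8$ in the threshold.)

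\emph{Case $x/\omega<Y$.} Here I first drop the terms $x/\omega<n\le Y$. Their total is at most $\log(Y\omega/x)+1/(x/\omega)$. After accounting carefully for the $1/\lceil x/\omega\rceil$ term, this is $\le\log Y-\log(x/\omega)+1$. The remaining window is $(x/\omega',x]$ with $\omega':=x/Y$, which satisfies $\omega/Y\le\omega'\le\omega$; in particular $\omega'\ge\sqrt\omega\ge A$, since $\omega\ge Y^{2}$ follows from the threshold because $2/\eps-1\ge 19$. Now the first case applies to this window, its left endpoint being exactly $Y$. It gives $\frac\eps2\log\omega'+O(1)$, where
\[
\log\omega'=\log\omega+\log(x/\omega)-\log Y.
\]
Summing, the $\log(x/\omega)$ terms combine with coefficient $\frac\eps2-1\le0$, so the total is at most
\[
\frac\eps2\log\omega+\Bigl(1-\frac\eps2\Bigr)\log Y+O(1),
\]
and the displayed consequence of the threshold bounds this by $\eps\log\omega$.

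The main obstacle is the bookkeeping of absolute constants. The $O(1)$ terms from the endpoint adjustments and from the $1/(k-m)$ comparison must fit inside the slack $\frac\eps2\log 8$ provided by the factor $8$ in the threshold, together with the $(1-\frac\eps2)\log Y$ budget. To make them fit I will define $Y$ so that these errors are at most $|m|/Y\le\frac14$. I will also avoid $\Omega\ne\omega$ losses by applying the hypothesis with $\Omega$ chosen exactly so that $X/\Omega$ equals the shifted left endpoint. This is possible because the hypothesis holds for all real $X\ge\Omega\ge A$.
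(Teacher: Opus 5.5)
\textbf{There is a gap, in Case~2.} Your Case~1 ($x/\omega\ge Y$) is fine. Nothing is discarded there, so the absolute-constant errors only need to be below $\frac\eps2\log\omega\ge\log8$ when $m\ne0$. When $m=0$ there are no errors at all; this covers $Y=4$, where your claim ``$\ge2$'' is actually false.

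In Case~2 the discarded block costs $\log Y-\log(x/\omega)+1$. After recombination your final display contains $(1-\frac\eps2)\log Y$, which uses up the entire $\log Y$-part of the threshold. The only slack left is $\frac\eps2\log8=\frac{3\eps}2\log2\le0.104$, and it tends to $0$ with $\eps$.

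The errors you still have to absorb are absolute constants, not $O(\eps)$:
\begin{itemize}
\item the weight comparison $\sum_{n>Y}|1/n-1/(n+m)|$ is about $\log(4/3)$ for large negative $m$ (you bound it by $1$);
\item the first discarded term contributes a further additive $1$.
\end{itemize}
So the sentence ``the displayed consequence of the threshold bounds this by $\eps\log\omega$'' does not follow. Your proposed remedy fails too. An error of size $|m|/Y\le1/4$ still exceeds $\frac{3\eps}2\log2$. Moving the cutoff further out to shrink the comparison error raises $(1-\frac\eps2)\log(\text{cutoff})$ beyond the fixed budget $(1-\frac\eps2)\log(4|m|+4)$. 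Since the exponent $2/\eps-1$ is sharp (Remark~\ref{rem:trans}), this constant bookkeeping is the substance of the lemma.

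\textbf{The missing idea is to cut at half your $Y$.} The paper sets $j:=\lfloor x/\omega\rfloor$ and $K_1:=\max(j,2|m|+2)$, and then:
\begin{itemize}
\item discards $j<n\le K_1$ at cost at most $\log(K_1/j)$;
\item pays at most $\log2$ for replacing $1/n$ by $1/(n+m)$, by telescoping, since $K_1\ge2|m|+2$;
\item applies the hypothesis with $\Omega=(x+m)/(K_1+m)\le2x/K_1$.
\end{itemize}
With $\alpha=\eps/2$ this recombines to
\[
\alpha\log\omega+(1-\alpha)\log\frac{K_1}{j}+\alpha\log\frac{x/\omega}{j}+(1+\alpha)\log2\le\alpha\log\omega+(1-\alpha)\log(2|m|+2)+(1+2\alpha)\log2 .
\]
The condition $\omega\ge8[4(|m|+1)]^{2/\eps-1}$ is exactly the condition that the right side is at most $\eps\log\omega$. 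Halving the cutoff frees $(1-\alpha)\log2$ of budget, which pays for the $\log2$ comparison. The rounding at $x/\omega$ enters only with coefficient $\alpha$, not $1$. The single cutoff $K_1$ also removes your case split.

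Your cutoff $Y$ can be salvaged, but only with sharper estimates than you use:
\begin{itemize}
\item for $j\ge2$, the recombined term $\frac\eps2\log(x/\omega)-\log j\le\frac\eps2\log3-\log2$ absorbs the comparison-and-window cost $(1+\frac\eps2)\log\frac43$;
\item for $j=1$, you need $H_Y-1\le\log Y-(1-\gamma)+\frac1{2Y}$ with $Y\ge8$.
\end{itemize}
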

\begin{proof}
Put $\alpha:=\eps/2$, $M_0:=|m|$, $B_1:=2M_0+2$,
$u_0:=x/\omega$, $j:=\lfloor u_0\rfloor\ge1$, and
$K_1:=\max(j,B_1)$. The threshold implies $\omega\ge4B_1^2$,
so $K_1\le x$. Discarding $j<n\le K_1$ costs at most
$H_{K_1}-H_j\le\log(K_1/j)$. Replacing $1/n$ by $1/(n+m)$
for $n>K_1$ costs at most $\log2$, by the telescoping identities
\[
\sum_{n>K_1}\left(\frac1n-\frac1{n+M_0}\right)
=H_{K_1+M_0}-H_{K_1},\qquad
\sum_{n>K_1}\left(\frac1{n-M_0}-\frac1n\right)
=H_{K_1}-H_{K_1-M_0}.
\]
Both are at most $\log2$, since $K_1\ge2M_0+2$.
The remaining translated sum has upper endpoint $X:=x+m$,
lower endpoint $K_1+m\ge1$, and ratio
$\Omega:=(x+m)/(K_1+m)$. Because $|m|\le K_1/2$ and
$K_1\le x$, the two sign cases give
\[
\frac{x}{2K_1}\le\Omega\le\frac{2x}{K_1},\qquad X\ge\Omega.
\]
Furthermore $x/K_1\ge\omega/B_1$, so
$\Omega\ge\omega/(2B_1)\ge\sqrt\omega\ge A$.
The hypothesis and these inequalities give
\begin{align*}
\Bigl|\sum_{x/\omega<n\le x}\frac{f(n+m)}n\Bigr|
&\le\log(K_1/j)+\log2+\alpha\log(2x/K_1)\\
&=\alpha\log\omega+(1-\alpha)\log(K_1/j)
 +\alpha\log(u_0/j)+(1+\alpha)\log2\\
&\le\alpha\log\omega+(1-\alpha)\log B_1
 +(1+2\alpha)\log2,
\end{align*}
since $K_1/j\le B_1$ and $u_0/j<2$. Finally
$\omega\ge8(2B_1)^{2/\eps-1}$ is exactly sufficient for
\[
\alpha\log\omega\ge(1-\alpha)\log B_1+(1+2\alpha)\log2.
\]
This proves the claim, including nonintegral $x/\omega$.
\end{proof}

\begin{remark}[Sharpness of the black-box translation exponent]\label{rem:trans}
For fixed $\eps$, the exponent $2/\eps-1$ cannot be reduced in
a translation theorem with source tolerance $\eps/2$, even if the
prefactor is allowed to depend on $A$ and $\eps$.
Indeed, write $\alpha:=\eps/2$ and choose $0\le\beta<\alpha$.
For an integer $M_0\ge2$, let
\[
f_{M_0}(v):=\beta+(1-\beta)\one_{M_0<v\le2M_0}.
\]
Every source window satisfies
\[
\sum_{X/\Omega<n\le X}\frac{f_{M_0}(n)}n
\le\beta\log\Omega+\beta+(1-\beta)\log2.
\]
Thus the hypothesis of Lemma~\ref{lem:trans} holds, uniformly in
$M_0$, with
\[
A:=\exp\left(\frac{\beta+(1-\beta)\log2}{\alpha-\beta}\right).
\]
For $m=M_0$ and $x=\omega\ge M_0$, the translated sum is exactly
\[
\beta(H_{\lfloor\omega\rfloor}-1)+(1-\beta)(H_{M_0}-1).
\]
If $\omega=C M_0^r$ with $r>1$ and fixed $C>0$, its ratio to
$\log\omega$ tends to $\beta+(1-\beta)/r$. Letting
$\beta\uparrow\alpha$ shows that a necessary exponent is
\[
r\ge\frac{1-\alpha}{\eps-\alpha}=\frac2\eps-1.
\]
An asserted exponent $r\le1$ fails as well, by testing at a larger
exponent strictly between $1$ and $2/\eps-1$.
For inheritance of \emph{vanishing} normalized cancellation, in the
worst case over bounded families having the corresponding source
cancellation uniformly, $\log(2+|m|)=o(\log\omega)$ is the exact
translation-size condition. This is not a necessary condition for each
individual $f$ or for one fixed positive tolerance. The special case
$\beta=0$ recovers the band indicator obstruction, with
$A=e^{2\log2/\eps}$ and translated sum $H_{M_0}-1$.
No arithmetic property of $\lambda$ is used in this obstruction.
\end{remark}
\begin{remark}[Source tolerance versus translation]\label{rem:tradeoff}
The exponent $2/\eps-1$ is tied to the source tolerance $\eps/2$; other allocations of the error budget give other exponents. If the hypothesis of Lemma~\ref{lem:trans} holds with $\alpha\log\Omega$ in place of $\frac\eps2\log\Omega$, $0<\alpha<\eps$, the same proof gives the bound $\alpha\log\omega+(1-\alpha)\log B_1+(1+2\alpha)\log2$ for $\omega\ge\max\{A^2,4B_1^2\}$, hence the conclusion at tolerance $\eps$ for
\[
\omega\ge\max\bigl\{A^2,\ 2^{3\alpha/(\eps-\alpha)}[4(|m|+1)]^{(1-\alpha)/(\eps-\alpha)}\bigr\},
\]
(the displayed threshold implies $\omega\ge4B_1^2$, because $(1-\alpha)/(\eps-\alpha)\ge1/\eps\ge10$), and the family of Remark~\ref{rem:trans} with $\beta\uparrow\alpha$ shows that the exponent $(1-\alpha)/(\eps-\alpha)$ is sharp for that source tolerance. In Theorem~\ref{thm:C} one may therefore apply Theorem~\ref{thm:A} at tolerance $\theta\eps$, $0<\theta<1$, at the price of the tower coefficient: since $B'\le\frac32B_s$, the threshold \eqref{eq:threshC} may be replaced by
\[
\omega\ge\max\Bigl\{\expk4\Bigl(\Bigl(\frac{3C_1}{2\theta^2}+1\Bigr)B_s\eps^{-2}\Bigr),\ 2^{3\theta/(1-\theta)}[4(|m|+1)]^{(1-\theta\eps)/((1-\theta)\eps)}\Bigr\}.
\]
As $\theta\downarrow0$, the leading coefficient $1/(1-\theta)$ of $1/\eps$ in the exponent approaches $1$, which the band example shows cannot be beaten, while the tower coefficient grows like $\theta^{-2}$; the choice $\theta=\eps$ gives exponent $1/\eps+1$ but a tower cost of order $\eps^{-4}$. These are trade-offs, not improvements of \eqref{eq:rateC}.
\end{remark}

\subsection{Proof of Theorem~\ref{thm:C}}
\emph{The bound $B_s\le8\fh_s$.} If $d\le2$ then $K_s=4$ and
$B_s\le8\max(a_1,a_2)$. If $d>2$ then $K_s\le2d+1\le\frac52d$
and $B_s\le\frac52|\Delta|(a_1+a_2)/(a_1a_2)\le5|\Delta|$.

\emph{Centring.} Put $b_i':=b_i-a_im$ and
$L_i'(n):=a_in+b_i'=L_i(n-m)$. The pair $(L_1',L_2')$ is admissible with the same determinant $\Delta$,
and $|b_i'|/a_i=|\beta_i-m|\le|\beta_i-\frac{\beta_1+\beta_2}2|+\frac12=\frac d2+\frac12$. Thus the centred block parameter satisfies
$K'\le\lceil4\max(1,d/2)\rceil+2=K_s+2\le\frac32K_s$ (as $K_s\ge4$) and $B'\le\frac32B_s$.
Theorem~\ref{thm:A}, or Corollary~\ref{cor:signs}, applied at tolerance
$\eps/2$ supplies the hypothesis of Lemma~\ref{lem:trans} with
\[
A:=\expk4(4C_1B'\eps^{-2})\le\expk4(6C_1B_s\eps^{-2}).
\]
Since $f(n+m)$ is the desired correlation, or sign indicator minus
$1/4$, the translation lemma proves the assertion for
$\omega\ge\max\{A^2,8[4(|m|+1)]^{2/\eps-1}\}$.
Also
\[
A^2\le\expk4(6C_1B_s\eps^{-2}+1)
\le\expk4((6C_1+1)B_s\eps^{-2}),
\]
because $(\expk4u)^2\le\expk4(u+1)$ for $u\ge0$.

\emph{The rate form.} Put
\[
\eps_0:=\max\left\{
\left(\frac{(6C_1+1)B_s}{\logk4\omega}\right)^{1/2},
\frac{4\log(4|m|+4)}{\log\omega}\right\}.
\]
The rate domain $\logk4\omega\ge1$ ensures $\omega\ge4$. If $\eps_0\le1/10$, the translation threshold in \eqref{eq:threshC} holds at $\eps=\eps_0$, since
\[
8[4(|m|+1)]^{2/\eps_0-1}
=\frac{2[4(|m|+1)]^{2/\eps_0}}{|m|+1}
\le2\omega^{1/2}\le\omega.
\]
In this case the theorem bounds the sum by
$\eps_0\log\omega$, which is at most
\[
\left(\frac{(6C_1+1)B_s}{\logk4\omega}\right)^{1/2}\log\omega
+4\log(4|m|+4).
\]
If $\eps_0>1/10$, use the trivial bound
$\log\omega+1\le20\eps_0\log\omega$.
Now $B_s\le8\fh_s$ and
$\log(4|m|+4)\le2\log(2+|m|)$ give \eqref{eq:rateC}. \qed

\section{Remarks}\label{sec:remarks}

\begin{enumerate}[leftmargin=1.6em]
\item \emph{Where the tower comes from.} $\logk3H_+\asymp S=2B\log3/\delta$: the residue entropy per rung is $\asymp\vt H$ while the usable mutual information is $\asymp\delta H/\log H$, so each rung needs $k\gtrsim(\vt/\delta)\log H$, and the entropy accumulated along the ladder is $\delta\int du/(u(\log u+L))\asymp\delta\logk3H$, which must exhaust the normalized entropy budget $B\log3$. The fourth exponential is $\log A_0\gg U=BH_+$, needed for $\bn$ to be equidistributed modulo $P_H$ with $\log P_H\asymp\vt H$ and for the affine errors. We do not see how to remove either exponential from this argument; the linear dependence on $B$ enters only through the entropy budget. The $\eps^{-2}$ is the Hoeffding scaling of Lemma~\ref{lem:ent}. Replacing Hoeffding's lemma there by a Bernstein bound, with the variance of $F_p(x,\cdot)$ under the uniform law in place of its range, does not improve the exponent in general: for fixed $p$ the map $j\mapsto a_1j+pb_1$ is injective, so one may take the second block identically $1$ and prescribe the first so that $x_{1,a_1j+pb_1}x_{2,a_2j+pb_2}=s_{j\bmod p}$ with the $p$ signs split as evenly as possible between $+1$ and $-1$; then $F_p(x,y)=s_{-y}m_y$ with $m_y$ the occupancy of the class $-y$ in $J_p$, and $\operatorname{Var}_y F_p(x,\cdot)\asymp(KH/p)^2$. This says nothing about a saving that uses the actual law of $\bX_H$. The expansion methods of \cite{HR22,Pil23,TT25} give much stronger quantitative bounds in their respective ranges, and \cite[Corollary 1.5]{HR22} already gives a bound in terms of the window ratio alone for $\lambda(n)\lambda(n+1)$, with fixed reduced affine pairs covered by the note at the end of \S8.3; what we have not tracked by these methods is the joint dependence on growing affine coefficients and every averaging window established here.
\item \emph{Rates and smaller windows.} The rate $(\fh/\logk4\omega)^{1/2}$ is the output of this argument; letting $\eps$ depend on $H$ does not change it. Theorem~\ref{thm:A} is a statement about $\sup_{x\ge\omega}$, so no separate simultaneous-window argument is needed. Below the threshold associated with a given $\eps$, the rate form \eqref{eq:rate} may still give cancellation at a larger tolerance when $\logk4\omega$ is sufficiently large compared with $\fh$. It does not automatically preserve the original tolerance throughout the interval from $\log A$ to $A$.
\item \emph{Sign patterns.} Corollary~\ref{cor:signs} and Theorem~\ref{thm:C} give each of the four sign pairs $(\lambda(L_1(n)),\lambda(L_2(n)))$ the frequency $\frac14+o(1)$ in every window once $\fh=o(\logk4\omega)$, or once $\fh_s=o(\logk4\omega)$ and $\log(2+|m|)=o(\log\omega)$; for the correlation $\lambda(n+b)\lambda(n+b+h)$ this allows $|h|=o(\logk4\omega)$ together with $|b|=\omega^{o(1)}$. Tao--Ter\"av\"ainen's odd-order theorem \cite{TT18} gives, for three fixed pairwise admissible forms, each of the eight sign triples frequency $\frac18+o(1)$ in every window; the seventh ingredient there, the triple correlation, is controlled through the Gowers uniformity of $\Lambda$, and \cite[Remark 1.9]{TT25b} sketches how the quantitative Gowers bounds of that paper should give a triply logarithmic saving for fixed odd-order logarithmic Chowla correlations. We do not know what a growing-height, every-window version would cost, since the coefficient dependence in the generalised von Neumann step and in the comparison of primes with rough numbers has not been tracked, and we make no claim. Even-order correlations of order four remain open.
\item \emph{General $g_1,g_2$.} We make no claim for general multiplicative functions. The natural route to an analogue of Theorem~\ref{thm:A} for completely multiplicative unimodular $g_1,g_2$ with $g_1$ non-pretentious in the sense \eqref{eq:np} is to use \cite[Theorem 1.7]{MRT15} in place of Theorem 1.3, to discretise the blocks, and to allow complex coefficients in the restriction step; the bookkeeping of the level, of the change of scale from $x$ to $[x/2\omega,2x]$, and of the discretisation has not been carried out here, and the reduction from merely multiplicative to completely multiplicative functions \cite[Prop.\ 2.2]{Tao16} would have to be made quantitative as well.
\item \emph{Consumers.} A Type II estimate at $M,N\asymp X^{1/2}$ would need correlations of height $\asymp M$ with cancellation at a fixed dyadic scale; both the coefficient range and the natural-density averaging are out of reach of any $\expk4$-type threshold, and the second does not follow from the logarithmic estimates proved here. A sieve for prime tuples needs cancellation against sieve weights relative to the sifted mass. An intermediate target with content is $\sup_{\fh\le(\log\omega)^\beta}|\cdot|\ll(\log\omega)^{1-\gamma}$ for all windows, for some fixed $\beta,\gamma>0$.
\end{enumerate}

\appendix
\section{The non-pretentiousness level of \texorpdfstring{$\lambda$}{lambda}}\label{app:N}

Put $\ell:=\log\log x$, $\sigma_x:=1+1/\log x=1+e^{-\ell}$, and for $\chi\bmod q$, $t\in\R$,
\[
D_x(\chi,t):=\sum_{p\le x}\frac{1+\Re\bigl(\chi(p)p^{-it}\bigr)}p ,
\]
so that \eqref{eq:np} for $\lambda$ reads $D_x(\bar\chi,t)\ge A$, equivalently $D_x(\chi,-t)\ge A$; since the family $\{(\chi,t)\}$ is closed under conjugation of the character and under change of sign of $t$, $\cA(x)$ is the set of $A\ge1$ with $D_x(\chi,t)\ge A$ for all $q\le A$, $\chi\bmod q$, $|t|\le Ax$, and $N_\lambda(x)=\sup(\{0\}\cup\cA(x))$. $\cA(x)$ is downward closed in $[1,\infty)$, so $A\in\cA(x)$ implies $N_\lambda(x)\ge A$.

\begin{proof}[Proof of Theorem~\ref{thm:N}(i)]
Let $A\in\cA(x)$. With $q=1$, average over $t\in[0,x]$ (all $|t|\le Ax$):
\[
A\le\frac1x\int_0^xD_x(1,t)\,dt=\sum_{p\le x}\frac1p+\sum_{p\le x}\frac{\sin(x\log p)}{x\,p\log p}\le\Bigl(1+\frac1{x\log2}\Bigr)\sum_{p\le x}\frac1p .
\]
For $x\ge286$, $\sum_{p\le x}1/p\le\ell+B_M+1/(2\log^2x)\le\ell+0.278$, so $A\le\ell+0.278+(\ell+0.278)/(x\log2)\le\ell+\frac12$ (the last factor is $\le1.006$ at $x=286$ and decreasing).
\end{proof}

\subsection*{Preliminaries}
\begin{lemma}\label{lem:A1}
For $\ell\ge64$, any $\chi\bmod q$ and $t\in\R$, with $\log L$ the Euler-product logarithm on $\sigma>1$: $|\sum_{p\le x}\chi(p)p^{-1-it}-\log L(\sigma_x+it,\chi)|\le1.3891$, hence
\begin{equation}\label{eq:A1}
D_x(\chi,t)\ge\ell+\log|L(\sigma_x+it,\chi)|-1.13 .
\end{equation}
\end{lemma}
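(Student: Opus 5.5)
Write $\sigma:=\sigma_x=1+1/\log x$ and expand the Euler-product logarithm as
\[
\log L(\sigma+it,\chi)=\sum_p\sum_{k\ge1}\frac{\chi(p)^k}{k\,p^{k(\sigma+it)}}.
\]
The difference from $\sum_{p\le x}\chi(p)p^{-1-it}$ splits into three pieces, and I would bound each in absolute value. The pieces are
\begin{align*}
E_1&:=\sum_{p\le x}\chi(p)p^{-it}\bigl(p^{-1}-p^{-\sigma}\bigr),\\
E_2&:=\sum_{p>x}\chi(p)p^{-\sigma-it},\\
E_3&:=\sum_p\sum_{k\ge2}\frac{\chi(p)^k}{k\,p^{k(\sigma+it)}}.
\end{align*}
Bounding everything in absolute value makes the estimate uniform in $\chi$, $q$ and $t$, which is what the lemma requires.

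For $E_1$, use $1-p^{-1/\log x}\le\log p/\log x$. This gives $|E_1|\le\frac1{\log x}\sum_{p\le x}\frac{\log p}p<1$ by the Rosser--Schoenfeld bound $\sum_{p\le y}(\log p)/p<\log y$. For $E_2$, take $|E_2|\le\sum_{p>x}p^{-1-1/\log x}$ and apply partial summation with $\theta(y)<1.02y$ (or with $\pi$, together with a Chebyshev lower bound). Substituting $u=\log y/\log x$ turns this into roughly $\int_1^\infty e^{-u}\,du/u=E_1(1)\approx0.2194$, up to a small factor of $1.02$ type. For $E_3$, bound it by the absolute constant $\sum_p\sum_{k\ge2}1/(kp^k)=\sum_p\bigl(-\log(1-1/p)-1/p\bigr)\approx0.3157$. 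Since $\ell\ge64$ makes $x$ astronomically large, the lower-order terms in these partial-summation estimates are negligible, and the aim is for $|E_1|+|E_2|+|E_3|$ to come in under $1.3891$.

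The obstacle is that the crude bound $|E_1|<1$ costs too much: with it the total is about $1.54$. So $E_1$ needs a sharper estimate. I would write $1-p^{-1/\log x}=1-e^{-v}$ with $v=\log p/\log x\in(0,1]$ and apply partial summation against the Mertens-type asymptotic $\sum_{p\le y}1/p=\log\log y+B_M+O^*(1/(2\log^2y))$. This gives
\[
\sum_{p\le x}\frac{1-p^{-1/\log x}}p\approx\int_0^1\frac{1-e^{-v}}v\,dv\approx0.7966.
\]
The Rosser--Schoenfeld error terms are negligible for $\ell\ge64$. The sum $0.7966+0.2194+0.3157\approx1.3317$ then fits under $1.3891$, with the slack absorbing the explicit errors. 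I expect this sharpened $E_1$ estimate to be the main step needing care.

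The inequality \eqref{eq:A1} then follows by taking real parts:
\[
D_x(\chi,t)=\sum_{p\le x}\frac1p+\Re\sum_{p\le x}\frac{\chi(p)p^{-it}}p\ge\sum_{p\le x}\frac1p+\log|L(\sigma+it,\chi)|-1.3891.
\]
The Rosser--Schoenfeld lower bound gives $\sum_{p\le x}1/p>\ell+B_M-1/(2\log^2x)$, and $B_M-1.3891-1/(2\log^2x)\ge-1.128>-1.13$.
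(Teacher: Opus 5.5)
Your proposal is correct and follows the paper's proof. Both use the same three-piece split: the smoothing loss is bounded by $\int_0^1(1-e^{-v})/v\,dv<0.7966$, the tail $\sum_{p>x}p^{-\sigma_x}$ reduces to $E_1(1)$, the prime powers contribute $<0.316$, and the Rosser--Schoenfeld lower bound for $\sum_{p\le x}1/p$ closes the argument. The differences are only in the explicit inputs: the paper handles the smoothing by partial summation against $\sum_{p\le y}(\log p)/p<\log y$ with the decreasing weight $(1-e^{-u/\log x})/u$, which gives the integral bound with no error term, and it bounds the tail using $\pi(y)<1.26\,y/\log y$ to get $0.2765$, whereas your Mertens-based smoothing (negligible $O(1/\log x)$ errors) and $\theta(y)<1.02y$ tail work equally well and leave somewhat more slack.
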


\emph{Proof.} Write
\[
\begin{aligned}
D_x(\chi,t)&=\sum_{p\le x}1/p+\Re\sum_{p\le x}\chi(p)p^{-1-it},\\
\log L(s,\chi)&=\sum_p\sum_{k\ge1}\chi(p)^kp^{-ks}/k.
\end{aligned}
\]
The prime powers $k\ge2$ contribute at most $\sum_p\sum_{k\ge2}(kp^k)^{-1}=\sum_p(-\log(1-1/p)-1/p)=0.3157\ldots<0.316$ (sum over $p\le10^4$, and bound the rest by $\frac12\sum_{n>10^4}1/(n(n-1))=1/20000$).

The smoothing $\sum_{p\le x}p^{-1}(1-p^{-1/\log x})$: with $\mathcal L_x:=\log x$, $\kappa(u):=(1-e^{-u/\mathcal L_x})/u$, which is decreasing on $u>0$, and $A(u):=\sum_{p\le e^u}(\log p)/p\le u$ (Rosser--Schoenfeld), partial summation gives $\sum_{p\le x}\frac{\log p}p\kappa(\log p)=\kappa(\mathcal L_x)A(\mathcal L_x)-\int_0^{\mathcal L_x}A(u)\kappa'(u)\,du\le \mathcal L_x\kappa(\mathcal L_x)-\int_0^{\mathcal L_x}u\kappa'(u)\,du=\int_0^{\mathcal L_x}\kappa(u)\,du=\int_0^1\frac{1-e^{-v}}v\,dv=0.7965\ldots<0.7966$.

The tail $\sum_{p>x}p^{-\sigma_x}\le1.26\,\sigma_x\int_x^\infty\frac{dy}{y^{\sigma_x}\log y}=1.26\,\sigma_xE_1(1)\le1.26(1+e^{-64})(0.21939)<0.2765$.

Finally $\sum_{p\le x}1/p\ge\ell+B_M-\frac12e^{-2\ell}\ge\ell+0.261$.

Altogether $D_x(\chi,t)\ge\ell+0.261+\log|L(\sigma_x+it,\chi)|-(0.316+0.7966+0.2765)$, and $1.3891-0.261=1.1281<1.13$. $\qed$

\begin{lemma}\label{lem:A2}
For $\psi\bmod q$, $\sigma\ge1$, $u\in\R$, with $\psi$ non-principal or $|u|\ge3$:
\[
|L(\sigma+iu,\psi)|\le510\,(\log(|u|+3))^{2/3}+94\log q+240 .
\]
\end{lemma}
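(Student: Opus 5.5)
The plan is to prove Lemma~\ref{lem:A2} by a standard convexity-type bound for $L(s,\psi)$ just to the right of the line $\sigma=1$. The key fact used is that partial sums of $\psi(n)$, or of $n^{-iu}$ in the principal case, are controlled. First reduce to $\sigma=1$ up to small changes: for $\sigma\ge2$ one has trivially $|L|\le\zeta(2)<2$. For $1\le\sigma<2$ I would work with the Dirichlet series directly, $L(s,\psi)=\sum_{n\le N}\psi(n)n^{-s}+\sum_{n>N}\psi(n)n^{-s}$. The tail is handled by partial summation: for non-principal $\psi$, Pólya--Vinogradov (or the trivial bound $|\sum_{n\le y}\psi(n)|\le q$) gives a tail of size $\le q|s|N^{-\sigma}$. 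The head is at most $\log N+1$. Taking $N$ of size a power of $q(|u|+3)$ gives a bound of shape $\log q+\log(|u|+3)+O(1)$.

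That crude route yields exponent $1$ on $\log(|u|+3)$, not $2/3$. So the next step, which is the real content, is to import the Vinogradov--Korobov-type explicit bound for $\zeta$ and for $L$ near $\sigma=1$. I would follow the same sources the appendix already cites (the explicit zero-free region work of Khale and the explicit Vinogradov--Korobov estimates of Ford type). The specific input is $|\zeta(\sigma+iu)|\le c_1(\log|u|)^{2/3}$ for $\sigma\ge1$, $|u|\ge3$, which Ford gives explicitly with $c_1=58.1$ or similar. To pass to characters, I would write $\psi(n)=\frac1{\tau(\bar\psi^*)}\sum_{a\bmod q^*}\bar\psi^*(a)e(an/q^*)$ after reducing to the primitive character $\psi^*$ of conductor $q^*\mid q$. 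Equivalently, express $L(s,\psi)$ as a combination of Hurwitz zeta values $q^{-s}\sum_{a}\psi(a)\zeta(s,a/q)$.

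This is where the terms split. The factor $\prod_{p\mid q}(1-\psi^*(p)p^{-s})$ costs at most $\prod_{p\mid q}(1+1/p)\ll\log\log q$. I would absorb it crudely into the $94\log q$ term, together with the bound for the small-$|u|$ range. For $|u|\le3$ with $\psi$ non-principal, I would use the partial-summation bound above with $N=q$, getting $\le\log q+1+q|s|q^{-1}\le\log q+6$. This is comfortably within $94\log q+240$.

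For $|u|\ge3$, the plan is to split the sum $\sum_n\psi(n)n^{-s}$ at $N_0=q^2$. The head up to $q^2$ contributes $\le2\log q+1$. For the tail, I would group $n$ by residue class mod $q$ and apply the Vinogradov--Korobov exponential-sum bound for $\sum_{M<m\le 2M}(qm+a)^{-iu}$ uniformly in $a$. This is the explicit van der Corput/Vinogradov estimate underlying Ford's $\zeta$ bound, applied to the shifted phase $u\log(qm+a)$, whose derivatives match those of $u\log m$ up to factors depending on $q$. Summing dyadically, the ranges with $M\le\exp((\log|u|)^{2/3})$ contribute trivially $(\log|u|)^{2/3}$. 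The larger ranges, up to $M\approx|u|$, contribute a saving that sums to $O((\log|u|)^{2/3})$. Beyond $|u|$, the tail is handled by the first-derivative test. Hence we get $510(\log(|u|+3))^{2/3}$, plus $\log q$ terms from the residue-class bookkeeping, plus constants.

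The main obstacle is this uniformity in the residue class and modulus. The explicit constants in Ford-type bounds are stated for $\zeta$, not for the twisted sums, and transferring them introduces the $94\log q$ and $240$. I expect the cleanest route is to use an already explicit Hurwitz-zeta version of the Vinogradov--Korobov bound, uniform in the shift $\alpha\in(0,1]$. This would give $|\zeta(s,\alpha)-\alpha^{-s}|\le c(\log|u|)^{2/3}$. Summing over $q$ shifts, against the $q^{-\sigma}$ normalization, yields the $(\log|u|)^{2/3}$ term with an absolute constant. The term $\alpha^{-s}$ summed gives $\sum_{a\le q}a^{-1}\le\log q+1$. Chasing these constants is expected to produce $510$, $94$ and $240$.
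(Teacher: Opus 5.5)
Your easy ranges are fine: $\sigma\ge2$ is trivial, and for non-principal $\psi$ with $|u|\le3$ partial summation gives $|L|\le1+\log q+|s|$. For $|u|\ge3$, your ``cleanest route'' is exactly the paper's Step 1. You write $L(s,\psi)=q^{-s}\sum_{a\le q}\psi(a)[\zeta(s,a/q)-(a/q)^{-s}]+\sum_{a\le q}\psi(a)a^{-s}$ and apply Ford's bound uniformly in the shift.

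The gap is that the explicit Hurwitz bound you invoke, $|\zeta(s,\alpha)-\alpha^{-s}|\le c(\log|u|)^{2/3}$, is stated only for $\frac12\le\sigma\le1$; the paper uses it only on $\sigma=1$. The lemma needs every $\sigma\ge1$, and the range $1<\sigma<2$ does not come for free, for three reasons:
\begin{itemize}
\item The trivial bound $\zeta(\sigma)\asymp(\sigma-1)^{-1}$ is worse than the target when $\sigma-1\le(\log|u|)^{-2/3}$.
\item There is no monotonicity in $\sigma$ to exploit.
\item You cannot pass from $\sigma=1$ to $\sigma>1$ by Abel summation against $n^{-(\sigma-1)}$. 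The statement of Ford's theorem controls only the completed Hurwitz values, not the partial sums $\sum_{n\le N}(n+\alpha)^{-1-iu}$ with $N<|u|$.
\end{itemize}
Your assertion that Ford gives $|\zeta(\sigma+iu)|\le c_1(\log|u|)^{2/3}$ for all $\sigma\ge1$ has the same defect.

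The paper fills this step by interpolation. It first proves $|L(1+it,\chi)|\le76.2(\log|t|)^{2/3}+1+\log q$ for $|t|\ge3$. It then adds crude bounds on the rest of the line $\sigma=1$, including $|t|<3$ for the principal character, where the pole is removed by the factor $s-1$. Finally it applies the three-lines theorem on $1\le\Re s\le2$ to $F(s)=(s-1)L(s,\psi)e^{(s-it_0)^2}$, with the Gaussian localising near $t=t_0$ and the trivial bound used on $\Re s=2$. The losses in this step produce $510$, $94$ and $240$: the constant $K_0<7$, the factor $4/(3e)$, and the bound $243$ on $\Re s=2$. So your expectation that constant-chasing in the Hurwitz step alone yields these numbers is also off; that step gives $76.2$, $1$, $1$ on $\sigma=1$.

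Your alternative direct route, with Vinogradov--Korobov sums over residue classes, would be uniform in $\sigma\ge1$ after partial summation against the decreasing weight $n^{-\sigma}$. However, it needs explicit constants for shifted and twisted exponential sums, which, as you acknowledge, you do not have.
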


\emph{Proof.} \emph{Step 1 (the line $\sigma=1$, $|t|\ge3$).} Ford \cite[Theorem 1]{For02} gives, for the Hurwitz zeta function, $|\zeta(1+it,v)-v^{-1-it}|\le76.2(\log t)^{2/3}$ for $t\ge3$, $0<v\le1$ (and by conjugation for $t\le-3$). For $\sigma>1$, grouping $n$ by residue class, $L(s,\chi)=q^{-s}\sum_{a=1}^q\chi(a)[\zeta(s,a/q)-(a/q)^{-s}]+\sum_{a=1}^q\chi(a)a^{-s}$; both sides are meromorphic for $\sigma>0$ with at most a pole at $s=1$, so the identity holds at $s=1+it$, $t\ne0$. Hence for every $\chi\bmod q$ and $|t|\ge3$,
\begin{equation}\label{eq:A2a}
|L(1+it,\chi)|\le76.2(\log|t|)^{2/3}+\sum_{a\le q}\frac1a\le76.2(\log|t|)^{2/3}+1+\log q .
\end{equation}
(The same bound with $1.92\log q$ in place of $1+\log q$ is \cite[Lemma 5.1]{Kha24} at $\sigma=1$.)
\emph{Step 2 (crude bounds).} If $\sigma\ge2$ then $|L(s,\psi)|\le\zeta(2)<2$. Let $1\le\sigma\le2$. For non-principal $\psi$, partial summation with $|\sum_{n\le v}\psi(n)|\le q$ gives $|L(s,\psi)|\le\sum_{n\le q}1/n+|s|q^{1-\sigma}/\sigma\le1+\log q+|s|$. For $\psi=\chi_0$, $L(s,\chi_0)=\zeta(s)\prod_{p\mid q}(1-p^{-s})$ with $|\prod_{p\mid q}(1-p^{-s})|\le\sum_{d\mid q}1/d\le1+\log q$, and $|\zeta(s)-1/(s-1)|\le1+|s|$ for $\sigma\ge1$, $s\ne1$ (from $\zeta(s)=s/(s-1)-s\int_1^\infty\{v\}v^{-s-1}dv$), so $|L(s,\chi_0)|\le(|s-1|^{-1}+1+|s|)(1+\log q)$. In particular, if $\psi$ is non-principal and $|u|\le3$ then $|s|\le\sqrt{13}$ and the lemma holds with room to spare.
\emph{Step 3 (three lines, $|u|\ge3$, $1\le\sigma\le2$).} Fix $t_0:=u$, $\ell_0:=\log(|t_0|+3)>1$, and $F(s):=(s-1)L(s,\psi)e^{(s-it_0)^2}$, which is entire (the factor $s-1$ removes the pole of $L(s,\chi_0)$) and bounded and continuous on $1\le\Re s\le2$, since $|e^{(s-it_0)^2}|=e^{\sigma^2-(t-t_0)^2}$ and $(s-1)L(s,\psi)$ grows at most polynomially in $|t|$ by Step 2. Put $w:=|t-t_0|$. On $\Re s=1$, for all real $t$,
\[
|t|\,|L(1+it,\psi)|\le(1+|t|)\bigl[76.2(\log(|t|+3))^{2/3}+14(1+\log q)\bigr],
\]
by \eqref{eq:A2a} for $|t|\ge3$ and by Step 2 for $|t|\le3$ (non-principal: $|t||L|\le3(1+\log q+3.17)\le14(1+\log q)$; principal: $|t||L|\le(1+3\cdot4.17)(1+\log q)\le14(1+\log q)$). Since $1+|t|\le(1+|t_0|)(1+w)$, $\log(|t|+3)\le\ell_0+\log(1+w)$ and $(x+y)^{2/3}\le x^{2/3}+y^{2/3}$,
\[
\begin{gathered}
\sup_{\Re s=1}|F(s)|\le K_0(1+|t_0|)[76.2\,\ell_0^{2/3}+14(1+\log q)],\\
K_0:=\sup_{w\ge0}(1+w)(1+(\log(1+w))^{2/3})e^{1-w^2}.
\end{gathered}
\]
An elementary bound suffices. Since $\log(1+w)\le w$ and
$w^{2/3}\le(1+2w)/3$, we have
\[
\begin{split}
K_0&\le\frac{2e}{3}\sup_{w\ge0}(w^2+3w+2)e^{-w^2}\\
&\le\frac23+\sqrt{2e}+\frac{4e}{3}
<\frac{41}{6}<7.
\end{split}
\]
Here we used $\sup w^2e^{-w^2}=1/e$,
$\sup we^{-w^2}=1/\sqrt{2e}$, and $8/3<e<11/4$.
On $\Re s=2$, $|L|\le\zeta(2)<2$ and
$|s-1|\le(1+|t_0|)(1+w)$, so
\[
\sup_{\Re s=2}|F(s)|
\le2e^4(1+|t_0|)\left(1+\frac1{\sqrt{2e}}\right)
<243(1+|t_0|).
\]
The three-lines theorem and $e^{\sigma^2}\ge e$ therefore give
\[
\begin{split}
|L(\sigma+it_0,\psi)|
&\le\frac4{3e}\max\{7[76.2\ell_0^{2/3}+14(1+\log q)],243\}\\
&\le267\ell_0^{2/3}+49\log q+171\\
&\le510\ell_0^{2/3}+94\log q+240.
\end{split}
\qquad\qed
\]

\begin{lemma}[Harnack transfer]\label{lem:A3}
Let $\chi\bmod q$, $t\in\R$, $0<\eta<1$, $r>0$ with $\eta r\le1/10$, and suppose $L(s,\chi)$ is analytic and zero-free on the closed disk of centre $z_0:=1+\eta r+it$ and radius $r$, with $\log|L|\le M$ there. If $e^{-\ell}\le\eta r$ then
\begin{equation}\label{eq:harnack}
\log|L(\sigma_x+it,\chi)|\ge-c_\eta\log\frac1{\eta r}-(c_\eta-1)M,\qquad c_\eta:=\frac{1+\eta}{1-\eta}.
\end{equation}
If moreover $\eta r\le1/1000$, the right side of \eqref{eq:harnack} may be increased by $c_\eta\log\frac85$.
\end{lemma}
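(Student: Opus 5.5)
Harnack's inequality will be applied to the positive harmonic function $u(s):=M-\log|L(s,\chi)|$ on the disk $D$ of centre $z_0$ and radius $r$. Since $L$ is analytic and zero-free on the closed disk, $\log|L|$ is harmonic there. The hypothesis $\log|L|\le M$ gives $u\ge0$, and $u>0$ unless $L$ is constant, a trivial case.

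\textbf{Step 1: place the point and apply Harnack.} The point $s_1:=\sigma_x+it$ lies on the real segment through $z_0$, at distance $\rho:=z_0-\sigma_x=1+\eta r-\sigma_x=\eta r-e^{-\ell}\in[0,\eta r)$, using $e^{-\ell}\le\eta r$. Harnack's inequality for a nonnegative harmonic function on a disk of radius $r$ gives
\[
u(s_1)\le\frac{r+\rho}{r-\rho}\,u(z_0)\le c_\eta\,u(z_0),
\]
because $\rho\le\eta r$ and $(r+\rho)/(r-\rho)$ is increasing in $\rho$. Writing this out yields
\[
\log|L(s_1)|\ge c_\eta\log|L(z_0)|-(c_\eta-1)M .
\]

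\textbf{Step 2: bound $L$ below at the centre.} The centre has real part $1+\eta r>1$, so the Euler product gives
\[
|L(z_0,\chi)|\ge\prod_p(1+p^{-1-\eta r})^{-1}=\frac{\zeta(2+2\eta r)}{\zeta(1+\eta r)}.
\]
Since $\zeta(2\sigma)\ge1$ and $\zeta(\sigma)\le\sigma/(\sigma-1)$, we have $1/\zeta(1+\eta r)\ge\eta r/(1+\eta r)$. Hence
\[
\log|L(z_0)|\ge\log(\eta r)-\log(1+\eta r).
\]
The extra $-\log(1+\eta r)$ is absorbed by the $\zeta(2+2\eta r)$ factor, since $\zeta(2.2)\ge 1+2^{-2.2}$ exceeds $1.1\ge1+\eta r$ when $\eta r\le1/10$. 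This gives $\log|L(z_0)|\ge-\log\frac1{\eta r}$, and multiplying by $c_\eta$ yields \eqref{eq:harnack}.

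\textbf{Step 3: the refinement for $\eta r\le1/1000$.} Use the sharper form
\[
\log|L(z_0)|\ge\log\zeta(2+2\eta r)+\log(\eta r)-\log(1+\eta r)
\]
together with $\zeta(2+2\delta)\ge\zeta(2.002)$. The target is $\log\zeta(2.002)-\log(1.001)\ge\log(8/5)$. Since $\zeta(2)=1.6449$, the value $\zeta(2.002)$ is about $1.6430$, and after dividing by $1.001$ this is still above $1.6$. Multiplying by $c_\eta$ gives the extra term $c_\eta\log\frac85$.

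The main care needed is the numerical check $\zeta(2.002)/1.001>1.6$, which requires an explicit lower bound such as $\zeta(s)\ge\sum_{n\le N}n^{-s}+\int_{N+1}^\infty v^{-s}dv$ with small $N$. The other point to verify is that Harnack is applied with $u\ge0$ on the closed disk and with the correct monotonicity in $\rho$.
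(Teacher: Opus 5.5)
Your proposal is correct and follows essentially the same route as the paper. Both arguments apply Harnack's inequality to $M-\log|L|$ at distance at most $\eta r$ from the centre, and bound the centre value below by $\zeta(2+2v)/\zeta(1+v)$, using $\zeta(1+v)\le(1+v)/v$ and letting $\zeta(2+2v)$ absorb the factor $1+v$. The differences are cosmetic: you use the exact distance $\eta r-e^{-\ell}$ together with monotonicity of $(r+\rho)/(r-\rho)$, and you verify the constant $8/5$ with an integral-tail lower bound for $\zeta(2.002)$ rather than the paper's partial sum over $n\le25$.
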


\emph{Proof.} On the disk, $u:=M-\log|L|$ is nonnegative and harmonic (as $\log|L|$ is harmonic where $L$ is analytic and zero-free; no branch of $\log L$ is needed). At the centre, with $v:=\eta r$, the Euler product gives $|L(z_0,\chi)|\ge\prod_p(1+p^{-1-v})^{-1}=\zeta(2+2v)/\zeta(1+v)\ge v$, since $\zeta(1+v)\le1+1/v=(1+v)/v$ and $\zeta(2+2v)\ge1+2^{-2-2v}\ge1+v$ for $0<v\le1/10$; so $u(z_0)\le M+\log(1/(\eta r))$. If $v\le1/1000$ then $\zeta(2+2v)/\zeta(1+v)\ge\frac{v}{1+v}\zeta(2.002)\ge\frac v{1.001}\sum_{n\le25}n^{-2.002}=1.6025\ldots v>\frac85v$, which gives the last claim. The point $z:=\sigma_x+it$ satisfies $|z-z_0|=|e^{-\ell}-\eta r|\le\eta r$, so Harnack's inequality gives $u(z)\le\frac{r+\eta r}{r-\eta r}u(z_0)=c_\eta u(z_0)$, i.e.\ $\log|L(z)|\ge M-c_\eta(M+\log\frac1{\eta r})$. $\qed$

Throughout the proof of the lower bound in (ii), $\ell\ge64$, $q\le\ell$ and $|t|\le x\ell$; thus $T:=|t|+1\le x(\ell+1)$, $\log T\le e^\ell+\log(\ell+1)\le1.001\,e^\ell$; when $|t|\ge12$, also $\log\log T\le\ell+1$. The Harnack parameter is $\eta_H:=3/(8\ell)$ for $|t|\ge12$ and $\eta_L:=1/\ell$ for $|t|<12$, with $c_H:=(\ell+\frac38)/(\ell-\frac38)$ and $c_L:=(\ell+1)/(\ell-1)$; in every application of Lemma~\ref{lem:A3} below $\eta r\le1/10$ holds trivially.

\subsection*{High frequencies: \texorpdfstring{$|t|\ge12$}{|t| >= 12}}
Let $\hat q:=\max(q,3)$, $T:=|t|+1$, and
\begin{equation}\label{eq:N4}
D:=10.5\log\hat q+61.5\,(\log T)^{2/3}(\log\log T)^{1/3},\qquad r:=\frac1D,\qquad\eta:=\eta_H=\frac3{8\ell},\qquad c:=c_H .
\end{equation}
By Khale \cite[Theorem 1.1]{Kha24}, for $q\ge3$ the function $L(s,\chi)$ has no zeros in the region
\[\sigma\ge1-\frac1{10.5\log q+61.5(\log|\Im s|)^{2/3}(\log\log|\Im s|)^{1/3}},\qquad|\Im s|\ge10 .\] Every point of the closed disk in Lemma~\ref{lem:A3} has $11\le|t|-r\le|\Im s|\le|t|+r\le T$ and $\Re s\ge1+\eta r-r>1-1/D$, and $D$ is at least the corresponding denominator at each such point (the denominator is increasing in $|\Im s|\le T$), so the closed disk is zero-free. For $q\in\{1,2\}$, $L(s,\chi)$ is $\zeta(s)$ times Euler factors without zeros, and $\zeta$ is zero-free there since $L(s,\chi_0^{(3)})=\zeta(s)(1-3^{-s})$ is.

\emph{The bound $M$.} On the left half of the disk, $1-r\le\sigma\le1$: by \cite[Lemma 5.1]{Kha24} and Ford's constants, $|L(s,\chi)|\le76.2\,q^{1-\sigma}|\Im s|^{4.45(1-\sigma)^{3/2}}(\log|\Im s|)^{2/3}+1.92\frac{q^{1-\sigma}-1}{1-\sigma}$; here $q^{1-\sigma}\le q^{1/D}\le e^{2/21}$ (as $D\ge10.5\log\hat q$), so $1.92(q^{1-\sigma}-1)/(1-\sigma)\le2.02\log q$; and $4.45\,r^{3/2}\log T=4.45\,D^{-3/2}\log T<0.01$, because
\[
\frac{D^{3/2}}{\log T}\ge61.5^{3/2}(\log\log T)^{1/2}\ge61.5^{3/2}(\log\log13)^{1/2}>468 .
\]
So $|L|\le76.2e^{2/21+0.01}(\log T)^{2/3}+2.02\log q\le135\,e^{2\ell/3}+2.02\log\ell$, using $\log T\le1.001e^\ell$. For $q\in\{1,2\}$ the same bound holds for $\zeta$ (Ford's constants with $q=1$) and the Euler factor multiplying $\zeta$ has modulus $\le1.51$ on the disk, so $|L|\le1.51\cdot85e^{2\ell/3}\le135e^{2\ell/3}$ again. On the right half, $\sigma\ge1$: Lemma~\ref{lem:A2} with $|u|\ge11$ and $\log(|u|+3)\le\log(2T)\le1.01e^\ell$ gives $|L|\le510(1.01e^\ell)^{2/3}+94\log\ell+240\le514e^{2\ell/3}+94\log\ell+240$. Hence on the whole disk
\begin{equation}\label{eq:N5}
|L(s,\chi)|\le900\,e^{2\ell/3},\qquad\text{so } M:=\log900+\tfrac23\ell;\qquad\text{and}\qquad D\le63\,e^{2\ell/3}(\ell+1)^{1/3}
\end{equation}
(for the latter, $\log T\le1.001e^\ell$, $\log\log T\le\ell+1$, $61.5\cdot1.001^{2/3}<61.55$, and $10.5\log\ell\le e^{2\ell/3}$ for $\ell\ge64$). The hypothesis $e^{-\ell}\le\eta r=3/(8\ell D)$ of Lemma~\ref{lem:A3} holds since $\frac83\ell D\le168\,\ell(\ell+1)^{1/3}e^{2\ell/3}\le e^{\ell}$ for $\ell\ge64$ (the ratio of the two sides is $<10^{-4}$ at $\ell=64$, and its logarithmic derivative $1/\ell+1/(3(\ell+1))-1/3$ is negative).

\emph{Conclusion.} Since $D\ge10.5\log3+61.5(\log13)^{2/3}(\log\log13)^{1/3}>124$, we have $\eta r\le3/(512\cdot124)$, which is less than $1/1000$; so Lemma~\ref{lem:A3} applies with the improved centre value. By \eqref{eq:A1}, \eqref{eq:harnack}, \eqref{eq:N5} and
\[
\log\frac1{\eta r}-\log\frac85=\log\frac{5\ell D}3\le\log(105\ell)+\frac23\ell+\frac13\log(\ell+1),
\]
we get
\begin{equation}\label{eq:N6}
D_x(\chi,t)\ge\ell-1.13-c\Bigl[\log(105\ell)+\frac23\ell+\frac13\log(\ell+1)\Bigr]-(c-1)\Bigl[\log900+\frac23\ell\Bigr]
=\frac\ell3-\frac43\log\ell-C_{\rm high}(\ell),
\end{equation}
where, using $\ell-\frac23c\ell-\frac23(c-1)\ell=\frac\ell3-\frac43(c-1)\ell$ and $\frac43(c-1)\ell=\frac{8\ell}{8\ell-3}$,
\[
\begin{aligned}
C_{\rm high}(\ell)&:=\frac{8\ell}{8\ell-3}+c\log105+(c-1)\frac43\log\ell\\
&\quad+\frac c3\log\Bigl(1+\frac1\ell\Bigr)+(c-1)\log900+1.13,
\qquad c=c_H=\frac{\ell+\frac38}{\ell-\frac38}.
\end{aligned}
\]
Each nonconstant term is decreasing in $\ell\ge64$ (for
$(c-1)\log\ell=6\log\ell/(8\ell-3)$ the derivative has the sign of
$8-3/\ell-8\log\ell<0$). Exact rational enclosures give
\[
6.99549<C_{\rm high}(64)<6.99550<7.
\]
Hence $D_x(\chi,t)\ge\ell/3-\frac43\log\ell-7$ for $|t|\ge12$.
For reproducibility, the logarithms in this enclosure and the resonator
enclosure below can be bounded by reducing each positive argument to
$2^j v$, $1\le v\le2$, and using, with $u=(v-1)/(v+1)$,
\[
0\le\log v-2\sum_{k=0}^{31}\frac{u^{2k+1}}{2k+1}
\le\frac{2u^{65}}{65(1-u^2)}.
\]
The same formula bounds $\log2$; rational interval arithmetic propagates
these bounds through the displayed expressions.

\subsection*{Low frequencies: \texorpdfstring{$|t|<12$}{|t| < 12}}
Put
\begin{equation}\label{eq:N7}
c_0:=\frac{0.96\,e^{-2}}{\sqrt\ell\,(\log\ell)^2},\qquad d_0:=(\log\ell+3)^2,\qquad t_c:=\frac{c_0}{4d_0},\qquad r_0:=\frac1{40\log(20\ell)}.
\end{equation}
For $\ell\ge64$ one has $t_c/4\le r_0$ and $e^{-\ell}\le t_c/(4\ell)$ (both hold at $\ell=64$ and the ratios are monotone; the logarithmic derivatives of $t_c/(4r_0)$ and of $4\ell e^{-\ell}/t_c$ are negative for $\ell\ge64$).

By McCurley's zero-free region \cite{McC84}, in the formulation of Kadiri \cite[Theorem 2.2]{Kad18}, for $q\ge1$ the function $L(s,\chi)$ has at most one zero in the region $\sigma\ge1-1/(R\log\max\{q,\,q|\Im s|,\,10\})$, $R=9.645908801$, and such a zero is real, simple, and occurs only for real non-principal $\chi$. Since $q\le\ell$ and $|\Im s|\le13$ on all disks below, $\max\{q,q|\Im s|,10\}\le13\ell$ and $R\log(13\ell)\le10\log(20\ell)$, so the region contains the part of the half-plane $\sigma\ge1-1/(10\log(20\ell))$ lying in the strip $|\Im s|\le13$, and in particular every disk below (all of which lie in $\sigma\ge1-r_0$, $|\Im s|\le13$).

In Cases L1--L3 we use $\eta:=\eta_L=1/\ell$ and $c:=c_L=(\ell+1)/(\ell-1)\le65/63$.

\emph{Case L1: $\chi$ non-real, or $\chi$ real with $|t|\ge t_c$.} Take $r:=r_0$ if $\chi$ is non-real, and $r:=\min(r_0,|t|/4)$ otherwise; in both cases $r\ge t_c/4$ and $\eta r\ge e^{-\ell}$. The disk of Lemma~\ref{lem:A3} lies in McCurley's region; for non-real $\chi$ it contains no zero, and for real $\chi$ with $|t|\ge t_c$ its points have $|\Im s|\ge|t|-r\ge3|t|/4>0$, so it avoids the real axis and hence both the possible real zero and the pole of $L(s,\chi_0)$. On the disk, $|s|\le14$ and $\sigma\ge1-r_0$, so for non-principal $\chi$ partial summation gives $|L(s,\chi)|\le q^{r_0}(1+\log q)+14q^{r_0}/(1-r_0)\le2(16+\log\ell)$ (as $q^{r_0}\le e^{1/40}$), while for $\chi_0$, $|\zeta(s)|\le|s-1|^{-1}+16$, $|s-1|\ge3|t|/4\ge3t_c/4$, and, since $\sigma\ge1-r_0$,
\[
\Bigl|\prod_{p\mid q}(1-p^{-s})\Bigr|\le\prod_{p\mid q}(1+p^{-1+r_0})=\sum_{d\mid\operatorname{rad}(q)}d^{-1+r_0}\le q^{r_0}\sum_{d\mid q}\frac1d\le e^{1/40}(1+\log q)<3(1+\log\ell),
\]
so $|L(s,\chi_0)|\le(4/t_c+48)(1+\log\ell)$. A common bound is $M:=\log U_{\rm low}$, $U_{\rm low}:=4(1/t_c+20)(1+\log\ell)$ (note $48\le80$). Lemma~\ref{lem:A3} with $r\ge t_c/4$, $1/(\eta r)\le4\ell/t_c\le8\ell/t_c$, gives
\[
D_x(\chi,t)\ge\ell-c\log\frac{8\ell}{t_c}-(c-1)\log U_{\rm low}-1.13 .
\]
For $\ell\ge64$: $\log\log\ell\le\frac12\log\ell$ and $\log\ell+3\le2\log\ell$, so $\log(1/t_c)\le\log\frac{16e^2}{0.96}+\frac52\log\ell$; and $t_c\le\frac1{20}$, $1+\log\ell\le2\log\ell$ give $\log U_{\rm low}\le\log16+\log\frac{16e^2}{0.96}+3\log\ell$. With $c=c_L\le65/63$ the coefficient of $\log\ell$ is at most $\frac72c+3(c-1)=\frac{467}{126}<4$ and the additive constant is $c(\log8+\log\frac{16e^2}{0.96})+(c-1)(\log16+\log\frac{16e^2}{0.96})+1.13=8.48\ldots<9$. Hence
\begin{equation}\label{eq:N10}
D_x(\chi,t)\ge\ell-4\log\ell-9\ \ >\ \frac\ell3-\frac43\log\ell-7 .
\end{equation}

\emph{Case L2: $\chi$ real non-principal, $|t|\le t_c$.} Here $q\ge3$. Let $\chi^*\bmod q^*$ induce $\chi$, $q^*>1$. Let $d$ be the fundamental discriminant with $\chi^*=\chi_d$, so $q^*=|d|$, let $h_d$ be the (ordinary) ideal class number and $w_d$ the number of roots of unity of $\mathbb Q(\sqrt d)$, and for $d>0$ let $\varepsilon_d>1$ generate the unit group modulo torsion (its norm may be $-1$). The analytic class number formula and $\zeta_{\mathbb Q(\sqrt d)}(s)=\zeta(s)L(s,\chi_d)$ \cite[Lecture 19, Theorems 19.12 and 19.15; Lecture 15, Example 15.17]{Suth21} give $L(1,\chi^*)\ge0.96/\sqrt{q^*}$: for $d<0$, $L(1,\chi^*)=2\pi h_d/(w_d\sqrt{|d|})\ge\pi/(3\sqrt{|d|})$; for $d>0$, $L(1,\chi^*)=2h_d\log\varepsilon_d/\sqrt d\ge2\log\frac{1+\sqrt5}2/\sqrt d>0.96/\sqrt d$ (a unit $\varepsilon>1$ has integral trace and norm $\pm1$, so $\varepsilon\ge\frac{1+\sqrt5}2$). Moreover, $L(1,\chi)=L(1,\chi^*)\prod_{p\mid q,\,p\nmid q^*}(1-\chi^*(p)/p)\ge\frac{0.96}{\sqrt q}\prod_{p\mid q}(1-p^{-1})$. Since $\prod_{p\mid q}(1-p^{-1})^{-1}\le\zeta(2)\prod_{p\mid q}(1+p^{-1})\le\zeta(2)\sum_{d\mid q}d^{-1}\le\zeta(2)(1+\log q)\le e^2(\log q)^2$ for $q\ge3$, we get $L(1,\chi)\ge c_0$ (as $q\le\ell$). For $s$ with $1\le\sigma\le2$, $|s|\le2$, partial summation as in Lemma~\ref{lem:A2} gives $|L'(s,\chi)|\le\sum_{n\le q}\frac{\log n}n+q\int_q^\infty\frac{1+2\log v}{v^2}dv\le(\log q)(1+\log q)+3+2\log q\le(\log q+3)^2\le d_0$. On the segment from $1$ to $\sigma_x+it$ (length $\le e^{-\ell}+t_c\le2t_c$ by \eqref{eq:N7}), $|L(\sigma_x+it,\chi)-L(1,\chi)|\le2d_0t_c=c_0/2$, so $|L(\sigma_x+it,\chi)|\ge c_0/2$ and by \eqref{eq:A1}
\[
D_x(\chi,t)\ge\ell+\log\frac{0.48e^{-2}}{\sqrt\ell(\log\ell)^2}-1.13\ \ge\ \ell-\frac12\log\ell-2\log\log\ell-3.87\ \ >\ \frac\ell3-\frac43\log\ell-7 .
\]

\emph{Case L3: $\chi=\chi_0$, $|t|\le t_c$.} $|\zeta(\sigma_x+it)|\ge|s-1|^{-1}-1-|s|\ge(2t_c)^{-1}-3\ge1$, and $|\prod_{p\mid q}(1-p^{-s})|\ge\prod_{p\mid q}(1-p^{-1})\ge e^{-2}(\log q)^{-2}$ for $q\ge3$ (and $\ge\frac12$ for $q\le2$), so $D_x(\chi_0,t)\ge\ell-2-2\log\log\ell-1.13>\ell/3-\frac43\log\ell-7$.

\begin{proof}[Proof of Theorem~\ref{thm:N}(ii), lower bound]
The cases $|t|\ge12$, L1, L2, L3 exhaust all $\chi\bmod q$ with $q\le\ell$ and $|t|\le x\ell$, and in each $D_x(\chi,t)\ge A_1:=\ell/3-\frac43\log\ell-7$ (in Cases L1--L3 the bounds obtained are larger than $A_1$ for $\ell\ge64$: e.g.\ $\ell-4\log\ell-9-A_1=\frac23\ell-\frac83\log\ell-2$ is positive and increasing). For $\ell\ge64$, $1\le A_1<\ell$ ($A_1=8.788\ldots$ at $\ell=64$, increasing), so the ranges $q\le A_1$, $|t|\le A_1x$ are covered, $A_1\in\cA(x)$, and $N_\lambda(x)\ge A_1$. The unrounded bound is $\min(\text{\eqref{eq:N6}},\text{\eqref{eq:N10}},\dots)=\eqref{eq:N6}$ for $\ell\ge64$.
\end{proof}

\subsection*{The upper bound: a finite variable-box resonator}
Write $H(v):=\sum_{p\le v}1/p$ and
$P_x(t):=\sum_{p\le x}p^{-1-it}$, so
$D_x(1,t)=H(x)+\Re P_x(t)$.

\begin{proof}[Proof of Theorem~\ref{thm:N}(ii), upper bound]
Let $\ell\ge64$, $\mathcal L:=\log x=e^\ell$, $z:=\mathcal L/2$,
$\ell_z:=\log z=\ell-\log2$, and, for $p\le z$, put
$k_p:=\lfloor\sqrt{z/p}\rfloor$. Define
\[
\mathcal S:=\left\{\prod_{p\le z}p^{j_p}:0\le j_p\le k_p\right\},
\quad N:=\prod_{p\le z}p^{k_p},\quad
J:=|\mathcal S|=\prod_{p\le z}(k_p+1),\quad
R(t):=\sum_{n\in\mathcal S}\lambda(n)n^{it}.
\]
Since $k_p=\#\{j\ge1:\ j^2p\le z\}$, we have $\log N=\sum_{j\ge1}\theta(z/j^2)$ and, as $k+1\le2^k$ for integers $k\ge1$, $\log J\le\log2\sum_{p\le z}k_p=\log2\sum_{j\ge1}\pi(z/j^2)$. The prime bounds of Rosser--Schoenfeld give
\[
\log N=\sum_{j\ge1}\theta(z/j^2)
\le1.02\zeta(2)z<0.84\mathcal L.
\]
For $j\le z^{1/4}$ use
$\log(z/j^2)\ge \ell_z/2$ and $\pi(u)\le1.26u/\log u$;
for larger $j$ use $\pi(u)\le u$ and
$\sum_{j>z^{1/4}}j^{-2}\le2z^{-1/4}$. Hence
\[
\frac{\log J}{\mathcal L}
\le\log2\left(\frac{1.26\zeta(2)}{\ell_z}+e^{-\ell_z/4}\right)
<0.03.
\]
The expression is decreasing in $\ell$, and its value at $64$ is
$0.02269\ldots$. Thus $N\le x^{0.84}$ and $J\le x^{0.03}$.

Every $n\in\mathcal S$ is at most $N$. If positive integers $n\ne h$
have $\min(n,h)\le N$, then
$|\log(h/n)|\ge\log(1+1/N)\ge1/(2N)$, whence
\[
\left|\frac1x\int_0^x e^{it\log(h/n)}\,dt\right|\le\frac{4N}x.
\]
Put
\[
\mathcal B:=\frac1x\int_0^x|R(t)|^2dt,\qquad
\mathcal A:=\frac1x\int_0^xP_x(t)|R(t)|^2dt,
\quad a_0:=\sum_{p\le z}\frac{k_p}{(k_p+1)p}.
\]
The diagonal in $\mathcal B$ is $J$. For a fixed $p\le z$, the
pairs $n=ph$ with $n,h\in\mathcal S$ number $Jk_p/(k_p+1)$,
and their signs are $\lambda(ph)\lambda(h)=-1$.
There is no diagonal for $p>z$. Every off-diagonal integral in
$\mathcal A$ is at most $4N/x$, because the smaller of $n$ and $ph$
is at most $N$. Thus, with
$\delta:=4NJ/x\le4x^{-0.13}<1/2$,
\[
\left|\frac{\mathcal B}J-1\right|\le\delta,\qquad
\left|\frac{\Re\mathcal A}J+a_0\right|\le\delta H(x).
\]
Writing $\mathcal B/J=1+v$ and $\Re\mathcal A/J=-a_0+u$ gives
\[
\frac{\Re\mathcal A}{\mathcal B}+a_0
=\frac{u+a_0v}{1+v}\le\frac{\delta(H(x)+a_0)}{1-\delta}
\le4\delta H(x).
\]
The nonnegative weight $|R|^2$ has positive integral, and $D_x(1,t)$
is continuous. Consequently some $t\in[0,x]$ satisfies
\begin{equation}\label{eq:res}
D_x(1,t)\le H(x)-H(z)+E(z)+16x^{-0.13}H(x),
\qquad E(z):=\sum_{p\le z}\frac1{p(k_p+1)}.
\end{equation}

For $z/4<p\le z$ one has $k_p=1$ exactly. For $p\le z/4$,
$k_p+1>\sqrt{z/p}$. Hence
\[
 E(z)\le\frac12\bigl(H(z)-H(z/4)\bigr)
       +z^{-1/2}\sum_{p\le z/4}p^{-1/2}.
\]
For $u\ge e^{10}$ put $f(u):=u^{-1/2}/\log u$ and
$G(u):=\sqrt u/(\log u-5)$. Direct differentiation, with $v=\log u$, gives
\[
 G'(u)-u^{-1/2}\left(\frac1{2\log u}+\frac1{\log^2u}\right)
 =u^{-1/2}\frac{(v-10)(v+5)}{2v^2(v-5)^2}\ge0.
\]
Partial summation against $\theta(u)\le1.02u$, and
$\sum_{p\le e^{10}}p^{-1/2}\le\sum_{n\le e^{10}}n^{-1/2}\le2e^5$, give,
for $V\ge e^{10}$,
\[
 \sum_{p\le V}p^{-1/2}
 \le2e^5+1.02\sqrt V\left(\frac1{\log V}+\frac1{\log V-5}\right).
\]
Here the dropped boundary terms are $-\theta(e^{10})f(e^{10})$ and
$-1.02G(e^{10})$, both negative.
Put $d_z:=\log(z/4)=\ell-3\log2$; then $d_z\ge10$.
The two Mertens inequalities and the last display yield
\[
 E(z)\le\frac12\log\frac{\ell_z}{d_z}
 +\frac1{4\ell_z^2}+\frac1{4d_z^2}
 +2e^{5-\ell_z/2}
 +0.51\left(\frac1{d_z}+\frac1{d_z-5}\right).
\]
Also
\[
 H(x)-H(z)\le\ell-\log\ell_z+\frac{e^{-2\ell}}2+\frac1{2\ell_z^2},
 \qquad H(x)\le\ell+0.278.
\]
Therefore \eqref{eq:res} is at most $\ell-\log\ell+F(\ell)$, where
\begin{align*}
 F(\ell):={}&-\log\left(1-\frac{\log2}{\ell}\right)
 +0.51\left(\frac1{\ell-3\log2}+\frac1{\ell-3\log2-5}\right)\\
 &+\frac12\log\frac{\ell-\log2}{\ell-3\log2}
 +2e^{5-(\ell-\log2)/2}+\frac{e^{-2\ell}}2\\
 &+\frac3{4(\ell-\log2)^2}+\frac1{4(\ell-3\log2)^2}
 +16(\ell+0.278)e^{-0.13e^\ell}.
\end{align*}
Every term in $\ell F(\ell)$ is decreasing for $\ell\ge64$.
For the first logarithm, writing $u=(\log2)/\ell$, the derivative is
$-\log(1-u)-u/(1-u)<0$. For the other logarithm use
\[
 \ell\log\frac{\ell-\log2}{\ell-3\log2}
 =\int_{\log2}^{3\log2}\frac{\ell}{\ell-r}\,dr,
\]
whose integrand is decreasing in $\ell$. The rational and exponential terms
are decreasing directly; the logarithmic derivative of the final term is
$1/\ell+1/(\ell+0.278)-0.13e^\ell<0$.
At $\ell=64$, the sum of all but the final contribution to $\ell F(\ell)$
lies strictly between $2.52215$ and $2.52217$.
For this enclosure, the two ordinary exponential contributions are
positive and together are less than $2^{-19}+2^{-123}$, using
$\log2<2$ and $e>2$; the remaining terms are bounded by the logarithm
series above. The final contribution is
$16\ell(\ell+0.278)e^{-0.13e^\ell}$. At $\ell=64$ it is less than
$2^{17}e^{-512}<2^{-495}<10^{-100}$, and it decreases thereafter.
Consequently $F(\ell)<2.53/\ell$ for every $\ell\ge64$.
Every $A\in\cA(x)$ satisfies $A\le D_x(1,t)$ for this same $t$,
since $q=1\le A$ and $t\le x\le Ax$. Taking the supremum proves
the assertion; if $\cA(x)$ is empty it is immediate.
\end{proof}

\begin{remark*}
Asymptotically, $(\frac13-o(1))\ell$ is also what \cite[(1-12)]{MRT15} gives after replacing the scale $x$ by $x\ell$ (the removed tail $\sum_{x<p\le x\ell}1/p$ is $O(\log\ell/\log x)$); part (ii) makes the $o(1)$ explicit. The constant $\frac13=1-\frac23$ is the Vinogradov--Korobov exponent applied with $\log|t|\le\log x+O(\log\ell)$; the leading loss is $c\log(1/r)\approx\frac23\ell+\frac13\log\ell$ from the width of the zero-free region, while $(c-1)M=O(1)$ once $\eta\asymp1/\ell$. For $\eta=\kappa/\ell$ the $\eta$-dependent part of the constant is asymptotically $-\log\kappa+\frac83\kappa$, minimised at $\kappa=\frac38$, which is the choice made above; this optimises the single-disk Harnack argument and is not an optimality statement about $N_\lambda$. Under GRH the Harnack step is replaced by the comparison of two smoothed truncations of the Euler product (Lemma~\ref{lem:A5}), which gives part (iii); the resonator upper bound shows that the resulting $\log\ell$ loss is genuine, so that $N_\lambda(x)=\ell-\log\ell+O(1)$ under GRH.
\end{remark*}

\begin{lemma}[GRH contour framework and truncation]\label{lem:A4}
Assume GRH for Dirichlet $L$-functions. Let $\chi\bmod q$, $t\in\R$, $\mathcal Q:=q(|t|+3)$ and $y:=\max(2,\log^2\mathcal Q)$, and suppose that either $\chi$ is non-principal or $|t|\ge1$. Then, uniformly for $1\le\sigma\le2$,
\begin{equation}\label{eq:A4}
\log L(\sigma+it,\chi)=\sum_{p\le y}\frac{\chi(p)}{p^{\sigma+it}}+O(1),
\end{equation}
where $\log L$ is the Euler-product logarithm, continued to $\sigma=1$ from $\sigma>1$, and the implied constant is absolute and effective.
\end{lemma}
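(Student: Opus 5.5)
The plan is the standard GRH argument: use an explicit formula for $\Lambda(n)\chi(n)n^{-it}$ with a smooth weight supported at scale $y$, and bound the zero sum using the $\sum_\rho$ density estimate from the Riemann--von Mann formula. Fix $s=\sigma+it$ with $1\le\sigma\le2$ and write $s_0:=2+it$.

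\emph{Step 1: reduction to a single point.} For $\sigma\ge2$ both sides of \eqref{eq:A4} are $O(1)$, since $|\log L|\le\log\zeta(2)$ and $\sum_p p^{-2}<1$. So it suffices to compare $\log L(s,\chi)$ with $\sum_{p\le y}\chi(p)p^{-s}$ by bounding the difference of their derivatives in $\sigma$ along $[\sigma,2]$. Alternatively, I would work directly with the smoothed sum. I would use the Selberg-type identity from the contour integral
\[
\frac1{2\pi i}\int_{(c)}-\frac{L'}{L}(s+w,\chi)\,\frac{y^{w}-y^{w/2}}{w^2}\,\frac{2}{\log y}\,dw,\qquad c>0.
\]
This equals $\sum_n\Lambda(n)\chi(n)n^{-s}\,v_y(n)$, where $v_y$ is the continuous piecewise-linear weight that is $1$ for $n\le y^{1/2}$, $0$ for $n\ge y$, and linear in $\log n$ in between.

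\emph{Step 2: moving the contour.} I would shift the contour to $\Re w=-\sigma+\frac12-\frac1{\log y}$, or further left, picking up residues.
\begin{enumerate}[leftmargin=1.6em]
\item The residue at $w=0$ is $-\frac{L'}{L}(s,\chi)$, with the second-order factor handled correctly.
\item If $\chi=\chi_0$, the pole at $w=1-s$ contributes $O(y/(|t|^2\log y))$ up to constants. This is $O(1)$ only if $|t|\ge1$ and the weight is arranged suitably; the cleaner route is to absorb it using $|1-s|\ge|t|\ge1$ and $y^{1-\sigma}\le1$. This is why the hypothesis excludes $\chi_0$ with $|t|<1$.
\item Each zero $\rho=\frac12+i\gamma$ contributes a term of size $y^{1/2-\sigma}/(|\rho-s|^2\log y)$.
\item The trivial zeros and the $\Gamma$-factor terms are negligible.
\end{enumerate}

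\emph{Step 3: bounding the zero sum.} Under GRH, $\sum_\rho 1/|s-\rho|^2\ll\Re\sum_\rho 1/(s_1-\rho)$ with $s_1=\sigma+\frac1{\log y}+it$. By the Hadamard product this is $\ll\log\mathcal Q+|\frac{L'}{L}(s_1)|$. This gives a zero contribution $\ll y^{1/2-\sigma}\log\mathcal Q/\log y\ll y^{-1/2}\log\mathcal Q$, which is $O(1)$ precisely because $y\ge\log^2\mathcal Q$.

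\emph{Step 4: integrating in $\sigma$.} I would integrate the resulting identity
\[
-\frac{L'}{L}(\sigma+it)=\sum_n\Lambda\chi v_y n^{-\sigma-it}+O(\text{zero terms})
\]
over $\sigma'\in[\sigma,\infty)$. This turns $\Lambda(n)n^{-s}$ into $\Lambda(n)n^{-s}/\log n$, and integrating the zero term $y^{1/2-\sigma'}$ gains a further $1/\log y$. The pieces are then handled as follows.
\begin{enumerate}[leftmargin=1.6em]
\item Prime powers $p^k$ with $k\ge2$ contribute $O(1)$.
\item The weight changes primes with $\sqrt y<p\le y$ by at most $\sum_{\sqrt y<p\le y}1/p=\log2+o(1)=O(1)$, so the weighted sum may be replaced by the sharp sum $\sum_{p\le y}$.
\item The $|\frac{L'}{L}(s_1)|$ term must itself be bounded circularly. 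Under GRH, the same explicit formula at $s_1$ gives $|\frac{L'}{L}(s_1)|\ll\log y+y^{-1/2}\log\mathcal Q\cdot\log y$, and after the factor $y^{1/2-\sigma}/\log y$ this remains $O(1)$.
\end{enumerate}

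The main obstacle is this last point: making the zero sum bound non-circular near $\sigma=1$. I would settle it by first proving $|\frac{L'}{L}(1+\frac1{\log y}+it)|\ll\log y$ from the same formula, where the zero contribution carries the small factor $y^{-1/2}\log\mathcal Q\le1$. I would then treat the pole term of $\chi_0$ for $1\le|t|$ carefully.
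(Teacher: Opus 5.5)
\textbf{There is a genuine gap.} The architecture is the paper's: a smoothed explicit formula for $-L'/L$, a contour shift, a GRH bound on the zero sum, integration in $\sigma$, and removal of prime powers and the taper. But the weight you chose breaks the decisive estimate.

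Your kernel $K(w)=\frac{2}{\log y}\cdot\frac{y^{w}-y^{w/2}}{w^{2}}$ contains the term $y^{w/2}$. At a zero $w=\rho-s$ one has $|y^{(\rho-s)/2}|=y^{(1/2-\sigma)/2}$, which is \emph{larger} than $y^{1/2-\sigma}$ for $\sigma>\frac12$. So each zero contributes $\asymp y^{(1/2-\sigma)/2}/(|\rho-s|^{2}\log y)$, not $y^{1/2-\sigma}/(|\rho-s|^{2}\log y)$ as claimed in your Step~2. Use $\sum_\rho|\rho-s|^{-2}\ll\log\mathcal Q$ and the extra $1/\log y$ from integrating in $\sigma$. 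At $\sigma=1$ the absolute bound is then only $\ll y^{-1/4}\log\mathcal Q/\log^{2}y$. For $y=\log^{2}\mathcal Q$ this is of size $(\log\mathcal Q)^{1/2}/(\log\log\mathcal Q)^{2}$, which is unbounded. In effect your weight has its plateau only up to $\sqrt y=\log\mathcal Q$, which is too short a cutoff for this argument. The same $y^{w/2}$ term also dominates your vertical-line integral. So the Step~3 claim ``$O(1)$ precisely because $y\ge\log^2\mathcal Q$'' is false as written.

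\textbf{The fix is a rescaling.} Either of these works:
\begin{itemize}
\item Use $\frac{y^{2w}-y^{w}}{w^{2}\log y}$, whose weight is $1$ for $n\le y$ and linear in $\log n$ on $(y,y^{2}]$. Then zeros contribute $\ll y^{1/2-\sigma}$, and the taper costs $\sum_{y<p\le y^{2}}1/p=\log2+o(1)$.
\item Use the paper's $w(n/y)$, equal to $1$ for $n\le y$ and tapering linearly on $(y,2y]$. Its transform $\widehat w(z)=(2^{z+1}-1)/(z(z+1))$ is $\ll(1+|z|^2)^{-1}$ on $\Re z\le-\frac12$.
\end{itemize}

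\textbf{Three smaller points.}
\begin{itemize}
\item The ``circularity'' you flag as the main obstacle does not exist. For $\sigma\ge1$, GRH gives $\Re(s-\rho)\ge\frac12$, hence $|s-\rho|^{-2}\ll(1+(\gamma-t)^{2})^{-1}$. The bound $\sum_\rho(1+(\gamma-t)^{2})^{-1}\ll\log\mathcal Q$ then follows from the Hadamard partial-fraction formula at $2+it$, where $|L'/L|\le-\zeta'/\zeta(2)$ trivially. At any point with real part $>1$ the Dirichlet series already bounds $L'/L$, so no explicit formula is needed there.
\item You give no bound for $L'/L$ on your line $\Re(s+w)=\frac12-\frac1{\log y}$, which lies within $1/\log y$ of the zeros. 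It is simpler to stop at $\Re(s+w)=\frac14$, as the paper does. There one uses the functional equation together with the partial-fraction formula at real part $\frac34$, where every zero is at distance $\ge\frac14$.
\item The functional equation and partial-fraction formula are statements about primitive characters. Imprimitive $L(s,\chi)$ has extra zeros on $\Re s=0$ from the missing Euler factors. You should therefore reduce to the primitive case first, as the paper does in its Step~2.
\end{itemize}
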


(Related explicit smoothed formulas for $\log|L(1,\chi)|$ and $\log|L(1,f)|$ are proved in \cite[Lemma 2.5]{LLS15} and \cite[Lemma 3.1]{Lum18}; \cite[Corollary 2.3]{Lum18} also gives bounds in the $t$-aspect. We give the argument here for the stated complex-logarithm truncation, uniformly for $1\le\sigma\le2$, including imprimitive characters and $\zeta$ at $|t|\ge1$. The $O(1)$ conclusion is not the numerical input to Theorem~\ref{thm:N}(iii); the proof supplies the contour framework and the kernel bound that Lemma~\ref{lem:A5} makes explicit.)

\emph{Proof.} Let $w(v):=1$ for $0<v\le1$, $w(v):=2-v$ for
$1<v\le2$, and $w(v):=0$ for $v>2$. Its Mellin transform is
\[
 \widehat w(z)=\frac{2^{z+1}-1}{z(z+1)}
             =\frac1z\int_1^2 u^z\,du.
\]
It has a simple pole of residue $1$ at $z=0$ and a removable singularity
at $z=-1$. We shall use the kernel bound
\begin{equation}\label{eq:kernelsharp}
 (1+|z|^2)|\widehat w(z)|\le C_w:=\frac{45(1+\sqrt2)}{41}
 <\frac83
 \qquad\left(-2\le\Re z\le-\frac12\right).
\end{equation}
In fact the bound holds throughout $\Re z\le-1/2$. Put
$U(z):=(1+|z|^2)|\widehat w(z)|$. The removable value is
$\widehat w(-1)=-\log2$, so the vector
$(\widehat w(z),\sqrt2\,z\widehat w(z),z^2\widehat w(z))$
is holomorphic on a neighbourhood of this half-plane. Its Euclidean norm
is $U$, which is therefore continuous and subharmonic.

On the boundary write $z=-1/2+it$ and $y=t^2$.
For $0\le y\le7$, the integral representation gives
\[
 U(z)\le2(\sqrt2-1)(r+1/r),\qquad r:=\sqrt{y+1/4}.
\]
Since $r+1/r$ has no interior maximum, its endpoint bounds give
$U(z)\le\max\{5(\sqrt2-1),33(\sqrt2-1)/\sqrt{29}\}$.
For the other ranges use the exact boundary identity
\[
 U(-1/2+it)=\left(1+\frac1{y+1/4}\right)
       \sqrt{3-2\sqrt2\cos(t\log2)}.
\]
If $7\le y\le10$, then $|t|\log2\le\sqrt{10}\log2<11/5<\pi$, and
\[
 \cos(t\log2)\ge\cos(11/5)
 \ge1-\frac{(11/5)^2}{2}+\frac{(11/5)^4}{24}
       -\frac{(11/5)^6}{720}>-\frac23.
\]
Thus $U(z)\le(33/29)\sqrt{3+(4/3)\sqrt2}$ in this range.
For $y\ge10$, the bound $\cos(t\log2)\ge-1$ gives
$U(z)\le45(1+\sqrt2)/41=C_w$.
The first three bounds are smaller than $C_w$, as is checked by squaring.

To pass from the boundary to the half-plane, let $R\ge13$.
On $|z|=R$, $\Re z\le-1/2$, we have
\[
 U(z)\le(1+\sqrt2)\frac{R^2+1}{R(R-1)}<C_w,
\]
because $|2^{z+1}-1|\le1+\sqrt2$, $|z+1|\ge R-1$, and
$4R^2-45R-41>0$. The subharmonic maximum principle on the bounded
domain $\{\Re z<-1/2,\ |z|<R\}$ therefore gives $U\le C_w$ there.
Taking $R$ arbitrarily large proves \eqref{eq:kernelsharp}.
Finally, $C_w<8/3$ is equivalent to $\sqrt2<193/135$ and follows by squaring.

Write $s=\sigma+it$, $1\le\sigma\le2$.

\emph{Step 1: primitive non-principal $\chi$ of conductor $q^*>1$.} Mellin inversion gives, for $y\ge2$,
\[
\sum_n\frac{\chi(n)\Lambda(n)}{n^s}w\Bigl(\frac ny\Bigr)=\frac1{2\pi i}\int_{(2)}\Bigl(-\frac{L'}L\Bigr)(s+z,\chi)\,y^z\widehat w(z)\,dz .
\]
Move the line of integration to $\Re z=\frac14-\sigma$, along rectangles with horizontal edges $\Im(s+z)=T_j^\pm$, where $T_j^+\to+\infty$ and $T_j^-\to-\infty$ are chosen at distance at least $c/\log(q^*(|T_j^\pm|+3))$ from every zero ordinate $\gamma$ (possible, for a small effective absolute $c>0$, since each unit interval of heights contains $O(\log(q^*(|T|+3)))$ ordinates).

On these edges $L'/L(s',\chi)\ll\log^2(q^*(|\Im s'|+3))$ uniformly for $\frac14\le\Re s'\le4$ (trivially for $\Re s'\ge2$, and otherwise by the partial-fraction formula $L'/L(s',\chi)=\sum_{|\gamma-\Im s'|\le1}(s'-\rho)^{-1}+O(\log(q^*(|\Im s'|+3)))$ \cite[Lemma 12.6, (12.5), p.~402]{MV07} and the local zero count \cite[Theorem 10.17, p.~351]{MV07}, $N(T+1,\chi)-N(T,\chi)\ll\log(q^*(|T|+3))$), while $|\widehat w(z)|\ll|\Im z|^{-2}$ there, so the edge integrals tend to $0$ ($s$ and $y$ being fixed; the uniformity claimed below comes from the residue and vertical-integral bounds).

The poles crossed are $z=0$, with residue $-L'/L(s,\chi)$, and $z=\rho-s$ for the nontrivial zeros $\rho=\frac12+i\gamma$ (with multiplicity), with residue $-y^{\rho-s}\widehat w(\rho-s)$; the trivial zeros have real part $\le0$ and are not crossed.

Under GRH, $|y^{\rho-s}|=y^{1/2-\sigma}$, $|\widehat w(\rho-s)|\le(8/3)/(1+(\gamma-t)^2)$ and $\sum_\rho(1+(\gamma-t)^2)^{-1}\ll\log\mathcal Q^*$, $\mathcal Q^*:=q^*(|t|+3)$ (from the local zero count). On the new line, $s+z=\frac14+iu$; by the functional equation, the partial-fraction formula at $\Re s'=\frac34$, and GRH (each $|\frac34+iu-\rho|\ge\frac14$), $L'/L(\frac14+iu,\chi)\ll\log(q^*(|u|+3))$, so this integral is $\ll y^{1/4-\sigma}\int\log(q^*(|u|+3))(1+(u-t)^2)^{-1}du\ll y^{1/4-\sigma}\log\mathcal Q^*$. Altogether, uniformly for $1\le\sigma\le2$,
\begin{equation}\label{eq:A4a}
-\frac{L'}L(s,\chi)=\sum_n\frac{\chi(n)\Lambda(n)}{n^s}w\Bigl(\frac ny\Bigr)+O\bigl(y^{1/2-\sigma}\log\mathcal Q^*\bigr).
\end{equation}
Integrate \eqref{eq:A4a} along the segment from $s$ to $2+it$ and use $\log L(2+it,\chi)=\sum_n\chi(n)\Lambda(n)n^{-2-it}/\log n$:
\[
\log L(s,\chi)=\sum_n\frac{\chi(n)\Lambda(n)}{n^s\log n}w\Bigl(\frac ny\Bigr)+\sum_{n>y}\frac{\chi(n)\Lambda(n)(1-w(n/y))}{n^{2+it}\log n}+O\Bigl(\frac{y^{1/2-\sigma}\log\mathcal Q^*}{\log y}\Bigr).
\]
The second sum is at most $\sum_{n>y}n^{-2}\le1$. In the first, the prime powers $p^k$, $k\ge2$, contribute at most $\sum_p\sum_{k\ge2}(kp^k)^{-1}\le\frac12\sum_p(p(p-1))^{-1}<0.39$, and the primes $y<p\le2y$ at most $\sum_{y<n\le2y}1/n\le1$. Hence, for all $y\ge2$ and $1\le\sigma\le2$,
\begin{equation}\label{eq:A4b}
\log L(s,\chi)=\sum_{p\le y}\frac{\chi(p)}{p^{s}}+O\Bigl(1+\frac{\log\mathcal Q^*}{\sqrt y\,\log y}\Bigr).
\end{equation}

\emph{Step 2: non-principal $\chi\bmod q$.} Let $\chi$ be induced by the primitive $\chi^*\bmod q^*$, $q^*>1$, and take $y=\max(2,\log^2\mathcal Q)$; since $\mathcal Q^*\le\mathcal Q$, the error in \eqref{eq:A4b} for $\chi^*$ is $O(1)$. Now $L(s,\chi)=L(s,\chi^*)\prod_{p\mid q,\,p\nmid q^*}(1-\chi^*(p)p^{-s})$, so $\log L(s,\chi)=\log L(s,\chi^*)-\sum_{p\mid q,\,p\nmid q^*}\sum_{k\ge1}\chi^*(p)^kp^{-ks}/k$. The terms $k=1$, $p\le y$ turn $\sum_{p\le y}\chi^*(p)p^{-s}$ into $\sum_{p\le y}\chi(p)p^{-s}$; the terms $k=1$, $p>y$ are at most $\sum_{p\mid q,\,p>y}1/p\le(\log q/\log y)/y\le1/(\sqrt y\log y)<1.1$ (as $\log q\le\sqrt y$); the terms $k\ge2$ are at most $0.39$. This gives \eqref{eq:A4}.

\emph{Step 3: principal $\chi_0\bmod q$, $|t|\ge1$.} For $\zeta$ the contour argument of Step 1 crosses one more pole, at $s+z=1$, where $-\zeta'/\zeta$ has residue $+1$; the shifted identity reads
\[
\sum_n\frac{\Lambda(n)}{n^s}w\Bigl(\frac ny\Bigr)=-\frac{\zeta'}\zeta(s)+y^{1-s}\widehat w(1-s)-\sum_\rho y^{\rho-s}\widehat w(\rho-s)+\frac1{2\pi i}\int_{\Re z=\frac14-\sigma}\Bigl(-\frac{\zeta'}\zeta\Bigr)(s+z)y^z\widehat w(z)\,dz .
\]
Since $|t|\ge1$ and $\sigma\ge1$, $|1-s|,|2-s|\ge1$ and $|\widehat w(1-s)|\le(2^{2-\sigma}+1)/(|1-s||2-s|)\le3$, so the extra term in \eqref{eq:A4a} is $\le3y^{1-\sigma}$ and its integral over $\sigma\le\Re s'\le2$ is $\le3/\log y\le5$. Hence \eqref{eq:A4b} holds for $\zeta$ with $\mathcal Q^*=|t|+3$, and the Euler factors of $L(s,\chi_0)=\zeta(s)\prod_{p\mid q}(1-p^{-s})$ are handled as in Step 2. $\qed$

\begin{lemma}[An explicit smoothed approximation under GRH]\label{lem:A5}
Assume GRH. Let $\chi\bmod q$, $t\in\R$, $\mathcal Q:=q(|t|+3)$,
and suppose that $\chi$ is non-principal or $|t|\ge1$.
With $w$ as in the proof of Lemma~\ref{lem:A4} and $\log L$ as in Lemma~\ref{lem:A4}, for $v\ge2$ put
\[
 W_v(\chi,t):=\sum_n\frac{\chi(n)\Lambda(n)}{n^{1+it}\log n}w(n/v).
\]
Then
\begin{equation}\label{eq:A5}
\begin{split}
 |\log L(1+it,\chi)-W_v(\chi,t)|
 \le{}&\frac{2\log\mathcal Q+16}{\sqrt v\log v}
       +\frac{5\log\mathcal Q+62}{v^{3/4}\log v}
       +\frac1{\log v}+\frac2v\\
     &+\frac{\log q}{v\log v}+\frac4{\sqrt v}.
\end{split}
\end{equation}
In particular the coarser, single-power estimate is
\begin{equation}\label{eq:A5compact}
 |\log L(1+it,\chi)-W_v(\chi,t)|
 \le\frac{80\log\mathcal Q}{\sqrt v\log v}
 +\frac1{\log v}+\frac2v+\frac{\log q}{v\log v}+\frac4{\sqrt v}.
\end{equation}
The sharper form \eqref{eq:A5}, not \eqref{eq:A5compact}, is used below.
\end{lemma}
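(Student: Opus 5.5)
The plan is to redo Step 1 of Lemma~\ref{lem:A4} at $\sigma=1$ with every constant tracked, and then integrate in $\sigma$ from $1$ to $\infty$ instead of to $2$. This keeps the $W_v$ series intact, so no truncation at $p\le y$ is needed. The first reduction is to primitive $\chi^*$ (or to $\zeta$).

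For $\sigma'\ge1$, Mellin inversion and the contour shift to $\Re z=\frac14-\sigma'$ give
\[
-\frac{L'}{L}(\sigma'+it,\chi^*)=\sum_n\frac{\chi^*(n)\Lambda(n)}{n^{\sigma'+it}}w(n/v)+\sum_\rho v^{\rho-s'}\widehat w(\rho-s')-I(\sigma')\ \bigl[-v^{1-s'}\widehat w(1-s')\ \text{for }\zeta\bigr].
\]
Integrating in $\sigma'$ over $[1,\infty)$ recovers $\log L(1+it)-W_v$. The error is then bounded term by term.

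\emph{Zero sum.} Under GRH, $\Re(\rho-s')=\frac12-\sigma'\in[-2,-\frac12]$ for $\sigma'\le5/2$, so \eqref{eq:kernelsharp} gives $|\widehat w(\rho-s')|\le\frac83/(1+(\gamma-t)^2)$. Integrating $v^{1/2-\sigma'}$ gives $v^{-1/2}/\log v$. For $\sigma'>5/2$ the crude bound $|\widehat w|\le 3/|z|^2$ suffices. This leaves the explicit quantity $\sum_\rho(1+(\gamma-t)^2)^{-1}$. I would bound it by $\frac12\log\mathcal Q+O(1)$ through the explicit formula: take the real part of $L'/L(2+it)$, use $\Re\frac1{2+it-\rho}=\frac{3/2}{9/4+(t-\gamma)^2}\ge\frac{2}{3}\cdot\frac1{1+(t-\gamma)^2}$, bound the Gamma-factor term by Stirling, and bound $|L'/L(2)|\le0.57$. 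This produces the $\frac{2\log\mathcal Q+16}{\sqrt v\log v}$ term.

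\emph{Shifted line.} Here $|v^z|=v^{1/4-\sigma'}$. I need an explicit $|L'/L(\frac14+iu)|\le\frac54\log(q(|u|+3))+C$. I would get it from the functional equation, which reflects to $\Re=\frac34$, together with the partial-fraction bound in which every $|\frac34+iu-\rho|\ge\frac14$ and the same zero sum appears again. The kernel bound $\frac83/(1+|z|^2)$, integrated in $u$ with the weight $\log(|u|+3)\le\log(|t|+3)+\log(1+|u-t|)$, gives $\frac{5\log\mathcal Q+62}{v^{3/4}\log v}$.

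\emph{Remaining terms.} For $\zeta$, the pole term gives $\int_1^\infty v^{1-\sigma'}|\widehat w(1-s')|\,d\sigma'\le\frac1{\log v}$ once $|t|\ge1$, using $|\widehat w(1-s')|\le\frac{2^{2-\sigma'}+1}{|1-s'||2-s'|}\le1$ for $|t|\ge1$ (the pole at $z=-1$ is removable). The term $2/v$ absorbs the passage from $\sigma=2$ to $\infty$ and the tail comparison. For imprimitive $\chi$, the Euler factors at $p\mid q$, $p\nmid q^*$ differ between $\log L$ and $W_v$ only at $n=p^k>v$. The $k=1$ terms give $\sum_{p\mid q,p>v}1/p\le\log q/(v\log v)$, and the higher powers with $p^k>v$ give $\le 4/\sqrt v$. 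The coarse form \eqref{eq:A5compact} then follows from $v^{-3/4}\le v^{-1/2}$ and $16,62\le \log\mathcal Q\cdot$const, since $\log\mathcal Q\ge\log 3$.

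The main obstacle is the explicit bound on $\sum_\rho(1+(\gamma-t)^2)^{-1}$ and on $L'/L$ on the line $\Re=\frac14$. Both need the Hadamard product with explicit Gamma-factor constants; all other steps are routine.
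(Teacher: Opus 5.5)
Your route is essentially the paper's proof. You reduce to $\chi^*$ or $\zeta$ and shift the Mellin contour to $\Re(s'+z)=\frac14$. You bound the zero residues by the kernel bound $\frac83(1+|z|^2)^{-1}$ times an explicit-formula bound for $\sum_\rho(1+(\gamma-t)^2)^{-1}$. You bound the vertical integral through $|L'/L(\frac14+iu)|$ convolved with $(1+(u-t)^2)^{-1}$, and you add the imprimitive Euler-factor tails. Two differences are worth noting.

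\emph{Integration range.} Integrating over $\sigma'\in[1,\infty)$ instead of $[1,2]$ is a genuine simplification. Both $\int_1^\infty(-L'/L)(\sigma'+it)\,d\sigma'=\log L(1+it)$ and the termwise integral of the smoothed series, which equals $W_v$, are exact. So the paper's endpoint correction $\sum_{n>v}n^{-2}\le2/v$ disappears, and that term is pure slack for you; there is no ``passage from $\sigma=2$'' in your version. The cost is that you need \eqref{eq:kernelsharp} on the whole half-plane $\Re z\le-\frac12$. The paper proves this anyway, so your split at $\sigma'=\frac52$ is unnecessary.

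\emph{The line $\Re s=\frac14$.} You bound $L'/L$ there by reflecting to $\frac34$, as in Lemma~\ref{lem:A4}. The paper instead subtracts the partial-fraction identities at $\frac14+iu$ and $2+iu$ directly. Both give the same leading coefficient (see (iii) below).

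Because the lemma is entirely about constants, three numerical steps need correcting. The first is a false inequality that, as justified, would not give \eqref{eq:A5}.

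(i) The chain $|\widehat w(1-s')|\le(2^{2-\sigma'}+1)/(|1-s'||2-s'|)\le1$ for $|t|\ge1$ is false. At $\sigma'=1$, $|t|=1$ the middle quantity is $3/\sqrt2$, so this justification yields only about $2.1/\log v$. That crude bound is the one used in Lemma~\ref{lem:A4}, where it gives only $3$. Use $\widehat w(z)=z^{-1}\int_1^2u^z\,du$ instead: for $z=1-\sigma'-it$ it gives $|\widehat w(z)|\le|t|^{-1}\int_1^2u^{1-\sigma'}\,du\le1$, and then $\int_1^\infty v^{1-\sigma'}\,d\sigma'=1/\log v$.

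(ii) With the comparison made at $2+it$, the inequality $(1+r^2)^{-1}\le\frac32\cdot\frac{3/2}{9/4+r^2}$ turns the explicit-formula bound $\frac12\log\mathcal Q^*+O(1)$ into $\sum_\rho(1+(\gamma-t)^2)^{-1}\le\frac34\log\mathcal Q^*+O(1)$, not $\frac12\log\mathcal Q+O(1)$. The $\frac34$ is in fact what your target needs, since $\frac83\cdot\frac34=2$. Evaluating at $\frac32+it$, where $\Re(s-\rho)=1$, would give $\frac12$ exactly.

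(iii) Reflection followed by the partial-fraction comparison gives $\frac72$, not $\frac54$. The conductor and Gamma terms of the functional equation already contribute $\log(q^*(|u|+3))+O(1)$. The per-zero difference $\bigl|\frac1{3/4+iu-\rho}-\frac1{2+iu-\rho}\bigr|$ equals $\frac{10}3$ at $\gamma=u$, and $\frac{10}3(1+(\gamma-u)^2)^{-1}$ is the sharp majorant. So the zeros add $\frac{10}3\cdot\frac34=\frac52$, for a total coefficient of $\frac72$, the same as in \eqref{eq:A5left}.

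This still suffices. Via the convolution estimate, the vertical term is $\bigl(\frac{14}3\log\mathcal Q^*+7\log2+\frac43C\bigr)v^{1/4-\sigma'}$. That is at most $(5\log\mathcal Q^*+62)v^{1/4-\sigma'}$ for $\mathcal Q^*\ge3$, provided the additive constant $C$ in your bound for $|L'/L(\frac14+iu)|$ is at most about $43$. The reflection route does give a constant well below that, but you need to compute it rather than assume it.
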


\emph{Proof.} First let $L$ be primitive non-principal of conductor $q^*>1$,
or let $L=\zeta$ and put $q^*=1$; write $\mathcal Q^*:=q^*(|t|+3)$.
Let $\psi_0:=\Gamma'/\Gamma$. Euler summation and the differentiated
series for $\psi_0$ give, for $\Re z>0$,
\[
 \psi_0(z)=\log z-\frac1{2z}
 +\int_0^\infty\frac{B_1(\{u\})}{(z+u)^2}\,du,
 \qquad |B_1(\{u\})|\le\frac12.
\]
Consequently
\begin{equation}\label{eq:A5gamma}
 |\psi_0(z)-\log z|\le\frac1{\Re z},\qquad
 |\psi_0'(z)|\le\frac1{(\Re z)^2}+\frac1{\Re z}.
\end{equation}
Also, on $\Re s=2$,
\[
 |L'/L(s)|\le\sum_{n\ge2}\frac{\log n}{n^2}
 \le\frac{\log2}{4}+\frac{1+\log2}{2}<2.
\]
The completed-function identities
\cite[Corollary 10.14, (10.25), (10.29)--(10.30), p.~349]{MV07} and
\cite[Corollary 10.18, (10.37)--(10.38), p.~352]{MV07}, after taking
real parts and using $\Re B(\chi)=-\sum_\rho\Re(1/\rho)$ for the
Hadamard-product constant $B(\chi)$ and its zeta analogue, give under GRH
\[
 \sum_\rho\frac{3/2}{9/4+(\gamma-t)^2}
 =\Re\frac{L'}L(2+it)+\frac12\log\frac{q^*}{\pi}
 +\frac12\Re\psi_0\left(\frac{2+\kappa+it}{2}\right)
\]
for primitive non-principal $L$, with parity $\kappa\in\{0,1\}$.
For $\zeta$, take $q^*=1$, $\kappa=0$ and add
$\Re((2+it)^{-1}+(1+it)^{-1})$ to the right.
These right sides are at most $5/2+(1/2)\log\mathcal Q^*$ and
$4+(1/2)\log\mathcal Q^*$, respectively.
Since $(1+u^2)^{-1}\le(3/2)(3/2)/(9/4+u^2)$, we have
\begin{equation}\label{eq:A5zeros}
 \sum_\rho\frac1{1+(\gamma-t)^2}
 \le\frac34\log\mathcal Q^*+6
 \le\frac{25}{4}\log\mathcal Q^*.
\end{equation}
Zeros are counted with multiplicity, and the last inequality follows from
$3/4+6/\log3<25/4$.

Subtract the full partial-fraction identities at $1/4+iu$ and $2+iu$.
Each zero contributes at most
\[
 \frac{7/4}{\sqrt{(1/16+(\gamma-u)^2)(9/4+(\gamma-u)^2)}}
 \le\frac{14/3}{1+(\gamma-u)^2},
\]
because
\[
 (r^2+1/16)(r^2+9/4)-\frac9{64}(1+r^2)^2
 =\frac{55}{64}r^4+\frac{65}{32}r^2\ge0.
\]
For the gamma-factor difference, integrate the bound in \eqref{eq:A5gamma}
instead of taking its maximum along the segment. The even-parity case gives
\[
 \frac12\left|\psi_0\left(\frac{2+iu}{2}\right)
              -\psi_0\left(\frac{1/4+iu}{2}\right)\right|
 \le\frac12\int_{1/8}^{1}(r^{-2}+r^{-1})\,dr
 =\frac{7+\log8}{2}.
\]
The odd-parity segment is a translate by $1/2$ and gives a smaller bound.
For $\zeta$, the additional rational-function differences cost at most
$4+4/3+1/2+1=41/6$.
Using the affine bound in \eqref{eq:A5zeros}, we obtain
\begin{equation}\label{eq:A5left}
 |L'/L(1/4+iu)|\le\frac72\log(q^*(|u|+3))+42
 \le42\log(q^*(|u|+3)),
\end{equation}
since $28+2+(7+\log8)/2+41/6<42$ and
$7/2+42/\log3<42$. We retain the affine form.

Use the contour of Lemma~\ref{lem:A4}. For primitive non-principal characters,
\cite[Lemmas 12.6--12.7, p.~402]{MV07} supplies the admissible heights;
for $\zeta$ use \cite[Lemmas 12.1--12.2, pp.~397--398]{MV07} and conjugation.
Here no unspecified height-selection constant enters the numerical estimate.
Indeed, first fix $q^*$, $t$ and $v$. On an admissible horizontal edge
$\Im(s+z)=T_j$, uniformly for $1\le\sigma\le2$, its length is bounded,
$v^{\Re z}\le v^2$, $\widehat w(z)=O(|T_j-t|^{-2})$, and
$L'/L(s+z)=O(\log^2(q^*(|T_j|+3)))$. Thus the edge integral is
\[
 O_{q^*,t,v}\left(\frac{\log^2(q^*(|T_j|+3))}{|T_j-t|^2}\right)\longrightarrow0.
\]
The zero series is absolutely convergent by \eqref{eq:A5zeros} and
\eqref{eq:kernelsharp}, and the limiting vertical integral is absolutely
convergent by \eqref{eq:A5left}. Passing to the limit gives an exact contour
identity. Only then do we apply the following uniform numerical majorants;
there is no remaining horizontal-edge error or unspecified constant.

Use $C_w<8/3$ in \eqref{eq:kernelsharp}. The zero residues are at most
\[
 (2\log\mathcal Q^*+16)v^{1/2-\sigma}.
\]
For the vertical integral, the elementary convolution estimate is
\[
 \int_\R\frac{\log(q^*(|u|+3))}{1+(u-t)^2}\,du
 \le\pi\log\mathcal Q^*+\frac{3\pi}{2}\log2.
\]
This follows from $|u|+3\le(|t|+3)(1+|u-t|)$ and
$\log(1+|r|)\le\frac12\log2+\frac12\log(1+r^2)$, together with
$\int_\R\log(1+r^2)/(1+r^2)\,dr=2\pi\log2$.
The factor $1/(2\pi)$ in Mellin inversion therefore gives the bound
\[
 \frac83\left(\frac74\log\mathcal Q^*+21+\frac{21}{8}\log2\right)
 v^{1/4-\sigma}
 \le(5\log\mathcal Q^*+62)v^{1/4-\sigma},
\]
using $21+(21/8)\log2<23$. We do not replace $v^{1/4-\sigma}$
by $v^{1/2-\sigma}$.
For $\zeta$ with $|t|\ge1$, the pole residue costs only $v^{1-\sigma}$,
since the integral representation of the kernel gives
\[
 |\widehat w(1-\sigma-it)|
 \le\frac1{|1-\sigma-it|}\int_1^2 u^{1-\sigma}\,du\le1.
\]
Integration in $\sigma$ from $1$ to $2$ gives the first three terms in
\eqref{eq:A5}. This also justifies the boundary value at $\sigma=1$;
the logarithm is continued from the Euler-product half-plane.
The endpoint correction at $2+it$ is at most
$\sum_{n>v}n^{-2}\le2/v$.

For imprimitive characters subtract the missing Euler factors as in
Lemma~\ref{lem:A4}. Their first powers beyond $v$ cost at most $\log q/(v\log v)$.
Their higher-power tails are at most
\begin{equation}\label{eq:A5pptail}
 \sum_{p^k>v,\ k\ge2}\frac1{kp^k}\le\frac4{\sqrt v}.
\end{equation}
Indeed, primes $p>\sqrt v$ contribute at most
$\frac12\sum_{n>\sqrt v}(n(n-1))^{-1}\le1/\sqrt v$.
For $p\le\sqrt v$, start at the least $k_0$ with $p^{k_0}>v$;
the remaining geometric sum is at most $2\log p/(v\log v)$.
Its sum is at most $2.04/(\sqrt v\log v)$, and $1+2.04/\log2<4$.
Since $\mathcal Q^*\le\mathcal Q$, this proves \eqref{eq:A5}.
Finally $v^{-3/4}\le v^{-1/2}$ and
$7+78/\log3<80$ give \eqref{eq:A5compact}. $\qed$

\begin{proof}[Proof of Theorem~\ref{thm:N}(iii)]
Assume GRH, let $\ell\ge64$, and let
$q\le\ell$, $|t|\le x\ell$.
Put
\[
\mathcal Q_*:=\ell(x\ell+3),\qquad
b:=\log\mathcal Q_*,\qquad Y:=b^2.
\]
Then $e^\ell\le b\le2e^\ell$, $Y<x$, and
$\log\mathcal Q\le b$ for $\mathcal Q=q(|t|+3)$.
Write $P_v(\chi,t):=\sum_{p\le v}\chi(p)p^{-1-it}$.

First suppose that $\chi$ is non-principal or $|t|\ge1$.
Apply the sharper estimate \eqref{eq:A5} at $Y$ and $x$, and subtract
before removing prime powers. At $Y=b^2$ its first three terms are at most
\[
 \frac1\ell+\frac{8e^{-\ell}}\ell,
 \qquad\frac{5e^{-\ell/2}}{2\ell}+\frac{31e^{-3\ell/2}}\ell,
 \qquad\frac1{2\ell},
\]
respectively. The other three terms there are bounded by
$2e^{-2\ell}$, $(\log\ell)e^{-2\ell}/(2\ell)$, and $4e^{-\ell}$.
At the cutoff $x$, all six terms together are at most $100e^{-\ell}$:
using $b\le2e^\ell$ and $e^{-e^\ell/2}\le e^{-\ell}$, they are bounded
in order by $20e^{-\ell}$, $72e^{-\ell}$, $e^{-\ell}$, $2e^{-\ell}$,
$e^{-\ell}$, and $4e^{-\ell}$.
Consequently
\[
 |W_x-W_Y|\le\frac{3/2+R(\ell)}\ell<\frac{1.51}\ell,
\]
where
\[
 R(\ell):=8e^{-\ell}+\frac52e^{-\ell/2}+31e^{-3\ell/2}
 +2\ell e^{-2\ell}+\frac{\log\ell}{2}e^{-2\ell}+104\ell e^{-\ell}.
\]
Every term is decreasing for $\ell\ge64$, and
$R(64)<3.17\times10^{-14}<0.01$.

The prime powers at or below $Y$ cancel exactly between $W_x$ and
$W_Y$. Since the weight increases with its cutoff, the remaining
prime-power difference costs at most $4/\sqrt Y$ by
\eqref{eq:A5pptail}. For $v\ge286$, Rosser--Schoenfeld gives
\[
H(2v)-H(v)\le\frac{\log2}{\log v}+\frac1{\log^2v}.
\]
The two prime transition bands, together with $4/\sqrt Y$, therefore
cost at most
\[
\frac{\log2}{2\ell}+\frac1{4\ell^2}+6e^{-\ell}
<\frac1{2\ell}.
\]
After multiplication by $\ell$, the majorant is decreasing and is
less than $0.351$ at $64$. Since $1.51+0.351<1.9$, we have proved the uniform estimate
\begin{equation}\label{eq:A6}
|P_x(\chi,t)-P_Y(\chi,t)|\le\frac{1.9}{\ell}.
\end{equation}
Hence
\[
D_x(\chi,t)\ge H(x)-H(Y)-\frac{1.9}{\ell}.
\]
The Mertens constant cancels. Moreover,
\[
\log\log Y\le\log\ell+\log2
+\frac{(2\log\ell+1)e^{-\ell}}{\ell},
\]
because $b\le e^\ell+2\log\ell+1$.
The Mertens errors contribute at most
$e^{-2\ell}/2+1/(8\ell^2)$.
Their sum with the last cutoff correction is less than $0.1/\ell$:
after multiplication by $\ell$ the majorant is
\[
(2\log\ell+1)e^{-\ell}+\frac\ell2e^{-2\ell}+\frac1{8\ell},
\]
which is decreasing for $\ell\ge64$ and less than $0.002$ at $64$.
Consequently
\begin{equation}\label{eq:GRHexplicit}
D_x(\chi,t)\ge\ell-\log\ell-\log2-\frac2\ell.
\end{equation}

If $\chi=\chi_0$ and $|t|<1$, set $s:=\sigma_x+it$.
Euler--Maclaurin gives
\[
\zeta(s)=\frac1{s-1}+\frac12+\frac{s}{12}
-\frac{s(s+1)}2\int_1^\infty B_2(\{v\})v^{-s-2}\,dv.
\]
On $1\le\Re s\le2$, $|\Im s|\le1$, the integral term has modulus
at most $|s||s+1|/(12(\Re s+1))\le5/24$, while the real parts of
$1/(s-1)$ and $s/12$ are at least $0$ and $1/12$.
Thus $\Re\zeta(s)\ge3/8$ (excluding the pole itself).
The Euler-factor bound from Case L2 and Lemma~\ref{lem:A1} yield
\[
D_x(\chi_0,t)\ge\ell-\log(1+\log\ell)
-1.13-\log\bigl(8\zeta(2)/3\bigr)
\ge\ell-\log\ell-\log2.
\]
The final comparison has margin greater than $0.602$ at $64$, and
its derivative is positive. This is stronger than \eqref{eq:GRHexplicit}.

For $\ell\ge64$, let
$A_2:=\ell-\log\ell-\log2-2/\ell$.
Then $1\le A_2<\ell$: at $64$ it exceeds $59.11$, and its derivative
$1-1/\ell+2/\ell^2$ is positive. Thus the defining ranges $q\le A_2$, $|t|\le A_2x$
are covered, $A_2\in\cA(x)$, and $N_\lambda(x)\ge A_2$.
Together with the upper bound of (ii), this proves (iii).
\end{proof}

\address{Independent researcher}
\email{v@deltagray.com}
\end{document}